\documentclass{article}
\usepackage{amsfonts, amsmath, amssymb, amscd, amsthm, graphicx,enumerate,comment}
\usepackage{hyperref,xypic}
\usepackage[usenames]{color}
\usepackage{tikz}
\usepackage{calc}
\usepackage{braket}
\usepackage[marginpar=3cm]{geometry}

\usepackage{thmtools}
\usepackage{thm-restate}

%\hoffset -1.8cm \voffset -1.1cm \textwidth=6.5in \textheight=8.5in
%\tolerance=9000 \emergencystretch=5pt \vfuzz=2pt
%\parskip=1.5mm

\makeatletter
\def\blfootnote{\xdef\@thefnmark{}\@footnotetext}
\makeatother

% THEOREM Environments ---------------------------------------------------
\newtheorem{thm}{Theorem}[section]
\newtheorem{cor}[thm]{Corollary}
\newtheorem{lem}[thm]{Lemma}
\newtheorem{prop}[thm]{Proposition}

\theoremstyle{definition}
\newtheorem{defn}[thm]{Definition}
\theoremstyle{remark}
\newtheorem{rem}[thm]{Remark}
\newtheorem{ex}[thm]{Example}

\newtheorem{claim}[thm]{Claim}

% New Fonts ---------------------------------------------------------------
\newfont{\eufm}{eufm10}

% New Commands ------------------------------------------------------------
\renewcommand{\phi}{\varphi}

\newcommand{\R}{\mathbb R}
\newcommand{\N}{\mathbb N}
\newcommand{\Z}{\mathbb Z}
\newcommand{\Q}{\mathbb Q}
\renewcommand{\H}{\mathbb{H}}
\newcommand{\Hl}{\mathcal{H}}

\newcommand{\semi}{\rtimes}

\newcommand{\Ga}{\Gamma}
\newcommand{\acts}{\curvearrowright}
\newcommand{\op}{\operatorname}
\newcommand{\Aut}{\op{Aut}}

\renewcommand{\d }{{\rm d} }
\newcommand{\HG}{\mathcal{H}(G)}
\newcommand{\GG}{\mathcal{G}(G)}
\newcommand{\Lab}{\operatorname{Lab}}
\newcommand{\mc }{\mathcal}
\newcommand{\constone}{D}
\newcommand{\consttwo}{E}
\newcommand{\constthree}{F}
\newcommand{\constfour}{L}
\newcommand{\constfive}{N}
\newcommand{\llangle}{\langle\hspace{-.7mm}\langle }
\newcommand{\rrangle}{\rangle\hspace{-.7mm}\rangle }
%\newcommand\carolyn[1]{\marginpar{\tiny #1 --- c}}
%\newcommand\alex[1]{\marginpar{\tiny #1 --- a}}
%\newcommand\sahana[1]{\marginpar{\tiny #1 --- s}}

% Slightly easier to work with commenting
%\newcounter{acomments}
%\newcommand{\alex}[1]{\textbf{\color{red}(A\arabic{acomments})} \marginpar{\scriptsize\raggedright\textbf{\color{red}(A\arabic{acomments})Alex: }#1}
%	\addtocounter{acomments}{1}}

%\newcounter{scomments}
%\newcommand{\sahana}[1]{\textbf{\color{red}(S\arabic{scomments})} \marginpar{\scriptsize\raggedright\textbf{\color{red}(S\arabic{scomments})Sahana: }#1}
%	\addtocounter{scomments}{1}}
	
%\newcounter{ccomments}
%\newcommand{\carolyn}[1]{\textbf{\color{red}(C\arabic{ccomments})} \marginpar{\scriptsize\raggedright\textbf{\color{red}(C\arabic{ccomments})Carolyn: }#1}
%	\addtocounter{ccomments}{1}}

\begin{document}
\title{Higher rank confining subsets and hyperbolic actions of solvable groups}
\author{Carolyn R. Abbott \and Sahana Balasubramanya  \and Alexander J. Rasmussen }
\date{}

\maketitle

\begin{abstract}
    Recent papers of the authors have completely described the hyperbolic actions of several families of classically studied solvable groups. A key tool for these investigations is the machinery of \emph{confining subsets} of Caprace, Cornulier, Monod, and Tessera, which applies, in particular, to solvable groups with virtually cyclic abelianizations. In this paper, we extend this machinery and give a correspondence between the hyperbolic actions of certain solvable groups with higher rank abelianizations and confining subsets of these more general groups. We then apply this extension to give a complete description of the hyperbolic actions of generalized solvable Baumslag-Solitar groups and to reprove a result of Sgobbi-Wong computing their Bieri-Neumann-Strebel invariants.
\end{abstract}

\section{Introduction}

Hyperbolic metric spaces are ubiquitous in geometric group theory. They are the key tools for studying small cancellation groups, mapping class groups, right-angled Artin groups, and numerous other groups. In light of this, it is a natural problem to try to give a complete description of all of the (isometric) actions of a given group on hyperbolic metric spaces.  We call such actions \emph{hyperbolic actions}.

In general, this problem is too lofty.  For instance, the trivial actions of a group on all possible metric spaces yields a collection of non-conjugate actions.  To make the problem tractable, in this paper we focus only on \emph{cobounded} actions, that is, actions such that the quotient of the space by the action has finite diameter (see Section \ref{section:background} for the precise definition).  In addition to the examples above, this rules out, for example, all parabolic actions of the group, including those that can be constructed using the Groves--Manning horoball machinery \cite{GrovesManning}.

For many groups studied by geometric group theorists, even the goal of  describing all cobounded hyperbolic actions is  still too lofty. For instance, all acylindrically hyperbolic groups (including most of the groups mentioned in the first paragraph and many others) admit uncountably many cobounded hyperbolic actions that are, in a natural sense, inequivalent \cite{ABO} (see Section \ref{section:background} for the definition of equivalent actions). Nonetheless, significant progress has recently been made in describing the cobounded hyperbolic actions of families of classically studied \emph{solvable} groups. The second author initiated this study by giving a complete description of the cobounded hyperbolic actions of the lamplighter groups $(\Z/n\Z)\wr \Z$ for $n \geq 2$ in \cite{Bal}. The first and third authors then completely described the hyperbolic actions of Anosov mapping torus groups in \cite{Largest} and the hyperbolic actions of solvable Baumslag-Solitar groups in \cite{AR}.

These descriptions also provide additional information about the relationships between the various actions. The first two authors and Osin showed in \cite{ABO} that the set $\mathcal{H}(G)$ of equivalence classes of cobounded hyperbolic actions of a group $G$ admits a partial order, which roughly corresponds  to collapsing equivariant families of subspaces to obtain one hyperbolic action from another (see Section \ref{section:background} for the precise definition). 
Each of the papers \cite{Bal}, \cite{AR}, and \cite{Largest} gives  a complete description of the poset $\mathcal{H}(G)$ for the group $G$ in question.

Aside from the inherent interest of classifying hyperbolic actions, the problem is also connected to the computation of \emph{Bieri-Neumann-Strebel (BNS) invariants} of finitely generated groups (see \cite{Brown} and Section \ref{section:BNS}), and hence to related notions such as property $R_\infty$  and topological fixed point theory %on compact manifolds 
(see \cite{TabackWong} and \cite{SSV}). For solvable groups, classifying hyperbolic actions is philosophically in line with a body of work centered around understanding the rigidity of continuous actions of solvable groups on manifolds that has a long history; see, e.g., \cite{BBRT,BMNR}, which address an old problem of Plante \cite{Plante}.

The starting point for the techniques developed in \cite{Bal, AR, Largest} is a theory developed by Caprace, Cornulier, Monod, and Tessera in \cite{Amen} that describes cobounded hyperbolic actions of groups with a fixed point on the boundary (so-called \emph{quasi-parabolic actions}) in terms of \emph{confining subsets}. In particular, for a group of the form $G= H\rtimes \Z$, with $\Z$ corresponding to a finite index subgroup of the abelianization, there is a correspondence between confining subsets of $H$ and regular quasi-parabolic actions of $G$. When $G$ is additionally solvable, one may classify hyperbolic actions of $G$ using the confining subsets of $H$. Crucially, the solvable groups discussed in the previous two paragraphs all  have rank one abelianizations, and so the authors were able to apply the work of \cite{Amen} directly. 

However, the machinery of Caprace-Cornulier-Monod-Tessera is not specific enough to classify the hyperbolic actions of solvable groups whose abelianizations have higher rank (see the discussion in Section \ref{section:semiwithZ}). Thus, the techniques developed in \cite{Bal, AR, Largest} do not immediately extend  to such groups.  In this paper we begin the work necessary to extend the theory of Caprace-Cornulier-Monod-Tessera in \cite{Amen} to general finitely generated solvable groups. In particular, we develop a strong definition of confining subsets for semidirect products of the form $H\rtimes \Z^n$, which is sufficient to classify hyperbolic actions of such groups when they are solvable. Such groups arise whenever there is a section of the homomorphism from a group to the free abelian part of its abelianization.

This allows us to  completely describe the cobounded hyperbolic actions of certain solvable groups that were previously out of reach. In contrast to lamplighter groups and solvable Baumslag-Solitar groups, whose posets of cobounded hyperbolic actions are finite, these groups \emph{always} admit uncountably many inequivalent actions on lines, because $\Z^n$ does when $n>1$ (see, for example, \cite[Example~4.23]{ABO}). However, we show that for a certain family of groups, the  remaining cobounded hyperbolic actions can  be understood in a straightforward way.

Our two main theorems give a correspondence between confining proper subsets and quasi-parabolic actions for groups $H\rtimes \Z^n$ and should be compared to \cite[Theorem~4.1]{Amen} in the case $n=1$.   For  solvable groups, every non-elementary hyperbolic action is quasi-parabolic, so this correspondence yields a complete description of the hyperbolic actions in most cases.  The terms in the statements of the following theorems are defined precisely in Sections \ref{section:background} and \ref{section:main}; we give a brief intuitive explanation here.  If $\gamma \colon \Z^n \to \Aut(H)$ is a homomorphism and $G=H\semi_\gamma \Z^n$, then a subset $Q\subseteq H$ is confining under $\gamma$ with respect to a homomorphism $\rho:\Z^n\to \R$ roughly when $H$ is \emph{attracted into} $Q$ under elements of $\Z^n$ with large image under $\rho$ and $Q$ is \emph{nearly closed} under the group operation of $H$. The subset $Z_\rho \subseteq \Z^n$ consists of the elements of $\Z^n$ with small image under $\rho$. The notation $\Gamma(G,S)$ stands for the Cayley graph of $G$ with respect to the (possibly infinite) generating set $S$. A \emph{quasi-parabolic action} is a hyperbolic action with a unique fixed point on the Gromov boundary and infinitely many loxodromic elements. Finally, the \emph{Busemann pseudocharacter} of this action measures the translation of group elements towards or away from the fixed point. 

Our first main result allows us to construct cobounded quasi-parabolic actions from confining subsets for a group $G$ as in the previous paragraph.  In the following statement,  $Z_\rho$ is (roughly) the  set of elements in $\Z^n$ that have small image under $\rho$; see \eqref{eqn:Zrho} in Section~\ref{section:actionconf} for the precise definition.

\begin{thm}\label{thm:main}
Let $G= H\semi_\gamma \Z^n$, where $\gamma\colon \Z^n \to \Aut(H)$ is a fixed homomorphism, and fix a homomorphism $\rho\colon \Z^n\to \R$ and the set $Z_\rho\subseteq \Z^n$ as above. %Let $Z_\rho$ be as in \eqref{eqn:Zrho}. 
If $Q\subseteq H$ is confining under $\gamma$ with respect to $\rho$, then 
\begin{enumerate}[(i)]
\item the Cayley graph $\Ga(G, Q\cup Z_\rho)$ is hyperbolic; %We will use the notation $d_\rho$ to denote the metric on $\Ga(G, S_\rho)$. Note that $d_\rho$ is a $1-$geodesic metric. 
\item if $Q$ is strictly confining, then $G\curvearrowright \Ga(G,Q\cup Z_\rho)$ is quasi-parabolic, and otherwise this action is lineal; and 
\item the Busemann pseudocharacter for the action $G \acts \Ga(G, Q\cup Z_\rho)$ is proportional to $\rho$ . 
\end{enumerate} 
\end{thm}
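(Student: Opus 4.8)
The plan is to build the action in three steps, mirroring the structure of \cite[Theorem~4.1]{Amen} but accounting for the higher-rank direction. First I would verify that $S := Q \cup Z_\rho$ is indeed a generating set for $G$: since $Z_\rho$ surjects onto a finite-index subgroup of $\Z^n$ (it contains $\ker\rho$ together with a bounded-image piece) and $Q$ must generate $H$ up to the $\gamma$-action (this should follow from the confining hypothesis, as in the rank-one case), we get $\langle S\rangle = G$. Then the bulk of the work is showing $\Gamma(G,S)$ is hyperbolic. The strategy I'd use is to exhibit $\Gamma(G,S)$ as quasi-isometric to a space built by coning off, or to directly produce a ``tree-graded''-like or horoball-like structure: elements of $\Z_\rho$ move one ``level set'' of $\rho$ to itself, while a generator $t\in\Z^n$ with $\rho(t)$ large pushes $H$-cosets deeper, and the confining property ($t^k Q t^{-k} \subseteq Q$ for large $k$, plus near-closure $QQ\subseteq (\text{bounded})\cdot Q$) is exactly what makes the sets $t^k H t^{-k}$, viewed through $Q$, shrink. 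Concretely I would define for each value $r\in\rho(\Z^n)$ the ``slice'' $X_r$ (the preimage under the $\rho$-level map) and show that the action looks like a horoball over the Cayley graph of $Z_\rho$-cosets: bounded-diameter slices glued along the $\rho$-gradient, which is the standard picture producing a hyperbolic (indeed quasi-parabolic) space. The key estimate is that the diameter of $\{h \in H : h \in \text{product of} \le k \text{ elements of } Q\}$ measured in $\Gamma(G,S)$ after conjugating by $t^m$ decays like $k - cm$, so that any two points at $\rho$-level $0$ are connected by a path going ``up'' and back ``down'' of length comparable to their distance — this forces thin triangles.

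For part (ii), once hyperbolicity is established, I would analyze the Gromov boundary. The homomorphism $\rho$ (composed with the quotient $G\to\Z^n$) is a quasimorphism on $G$ that is unbounded on loxodromics, and its ``sign'' picks out a distinguished point $\xi_+$ at infinity — the direction in which the $\rho$-levels go to $+\infty$. I'd show $\xi_+$ is fixed by all of $G$: the $H$-part and $Z_\rho$-part fix each level set (so fix $\xi_+$ trivially), while $t$ with $\rho(t)>0$ translates toward $\xi_+$. To see the action is quasi-parabolic (not lineal) precisely when $Q$ is strictly confining, I would argue that strict confinement means the sets $t^k Q t^{-k}$ are \emph{properly} nested with unbounded ``gaps,'' so there is no second global fixed point at infinity and there exist loxodromics with distinct attracting points other than $\xi_+$'s antipode — equivalently, the Busemann cocycle is a genuine quasimorphism, not a homomorphism-up-to-bounded on a line. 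When $Q$ is confining but not strictly so (i.e. some $t^k Q t^{-k} = Q$ up to bounded error, so $Q$ is essentially a subgroup), the construction collapses to a lineal action on something quasi-isometric to $\R$ with $\rho$ as the translation length; I'd make this dichotomy precise by tracking whether $\bigcup_k t^{-k}Qt^k$ is all of $H$ versus a proper ``horofunction-bounded'' subset.

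For part (iii), the Busemann pseudocharacter $\beta$ of the action $G\acts\Gamma(G,S)$ is defined via the limit $\beta(g) = \lim_{n}\frac{1}{n} b_{\xi_+}(g^n)$ where $b_{\xi_+}$ is a Busemann function at the fixed point $\xi_+$. By construction $b_{\xi_+}$ agrees up to bounded error with $\rho\circ\pi$ (where $\pi\colon G\to\Z^n$), since moving distance $d$ ``toward $\xi_+$'' in $\Gamma(G,S)$ changes the $\rho$-level by $d$ up to a multiplicative constant coming from the choice of generators in $Z_\rho$; passing to the stable limit kills the bounded error, giving $\beta = c\cdot(\rho\circ\pi)$ for some constant $c>0$. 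I would spell this out by checking $\beta$ vanishes on $H$ and on $Z_\rho\cap\ker\rho$ (these act with bounded orbits on each relevant horoball slice) and is linear in the $t$-direction with slope $\rho(t)$ up to the universal constant.

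The main obstacle I expect is the hyperbolicity proof in part (i): in rank one the ``horoball over a point'' picture is essentially immediate, but here the base of the horoball is (a Cayley graph of) $Z_\rho$, which need \emph{not} be hyperbolic — it can contain $\Z^{n-1}$. The subtlety is that we are coning $Z_\rho$ off by declaring it a single generating set (so its Cayley graph has diameter comparable to... well, it is not bounded, but its contribution to distances is controlled by $\rho$-levels), and one must show that the non-hyperbolic base ``disappears at infinity'' — i.e. the only geometry that survives is the $\rho$-gradient direction. I'd handle this by proving that any geodesic in $\Gamma(G,S)$ either stays near a single $\rho$-level for bounded time or decomposes into an ``ascending/descending'' piece where the $Z_\rho$-direction is irrelevant because one can always retreat to a bounded-diameter slice, making the $Z_\rho$-word-length collapse; making the constants in this decomposition uniform, independent of which $Z_\rho$-element is used, is the delicate point and is exactly where the precise (strong, higher-rank) definition of ``confining'' from Section~\ref{section:main} will need to be invoked rather than the rank-one version.
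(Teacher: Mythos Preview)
Your proposal has the right instincts in several places but diverges significantly from the paper's argument for part (i), and contains a misconception that would prevent your horoball approach from going through as stated.

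The paper does \emph{not} prove hyperbolicity by building a horoball model or coning off. Instead it follows the combinatorial strategy of \cite[Proposition~4.6]{Amen}: first (Lemma~\ref{control}) bound the length of any geodesic subpath labeled entirely in $Q$ by a constant $k_0$, using the confining condition $\gamma(z_0)(Q\cdot Q)\subseteq Q$ to show $Q^{2^m}\subseteq B(2m\ell_0+2)$; then (Lemma~\ref{newpaths}) show every geodesic is uniformly Hausdorff-close to a path in normal form $\tau_1\tau_2\tau_3$ with $\tau_1$ labeled by elements of $Z_\rho$ with negative $\rho$-value, $\tau_2$ labeled in $Q$ of length $\le k_0$, and $\tau_3$ labeled by elements with nonnegative $\rho$-value; finally (Lemma~\ref{hyp}) prove geodesic bigons are uniformly slim by putting both sides in normal form and comparing, then invoke Papasoglu's bigon criterion. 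The comparison step uses, crucially, that $\Gamma(\Z^n,Z_\rho)$ is already $\delta'$-hyperbolic.

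This last point is exactly where your ``main obstacle'' dissolves, and where your picture goes wrong. You worry that the ``base'' $Z_\rho$ is non-hyperbolic because it contains $\Z^{n-1}$. But by \cite[Lemma~4.15]{ABO}, the Cayley graph $\Gamma(\Z^n,Z_\rho)$ is a \emph{quasi-line}: the kernel of $\rho$ is entirely contained in $Z_\rho$, so that direction has diameter $1$, and what remains is one-dimensional. So there is no non-hyperbolic base to worry about. Relatedly, your claim that the $\rho$-level slices have bounded diameter is false in the strictly confining case: the paper shows (Lemma~\ref{focal}) that $H$ acts parabolically, with elements of $\gamma(z^{-i})(Q)\setminus \gamma(z^{-i+1})(Q)$ having word length growing linearly in $i$. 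The geometry is that of a genuine quasi-parabolic action (think $\mathbb{H}^2$ horocycles), not a horoball over a bounded base.

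Your sketches for (ii) and (iii) are closer to the paper's arguments, though note that the Busemann pseudocharacter here is always an honest homomorphism (it factors through the abelian group $\Z^n$), so the quasi-parabolic/lineal dichotomy is not about $\beta$ being a quasimorphism versus a homomorphism. The paper distinguishes the cases by showing directly that non-strict confining forces $Q=H$, hence a lineal action, while strict confining forces $H$ to have unbounded orbits via the linear growth estimate above. For (iii), the paper proves $\beta\sim\rho$ by showing $\beta(z)\ge 0 \iff \rho(z)\ge 0$ (via loxodromic/elliptic behavior on the quasi-line $\Gamma(\Z^n,Z_\rho)$) and then invoking an elementary linear-algebra lemma (Lemma~\ref{lem:multiplekernel}).
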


Our second main result allows us to recover a strictly confining subset from a cobounded quasi-parabolic action of the group $G$ under certain conditions.

\begin{thm}\label{prop:main}
Let $G = H \rtimes_\gamma \Z^n$, and let $G\curvearrowright X$ be a cobounded quasi-parabolic action on a hyperbolic space $X$.  Let $\beta$ be the Busemann pseudocharacter associated to this action, and assume that $\beta(H) =0$. Then there exists a subset $Q\subseteq H$ which is strictly confining under the action of  $\gamma$ with respect to  $\beta$, such that $X$ is $G$--equivariantly quasi-isometric to $\Gamma(G,Q\cup Z_\beta)$, where $Z_\beta$ is as in \eqref{eqn:Zrho}.
\end{thm}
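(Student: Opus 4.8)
The plan is to construct the subset $Q \subseteq H$ directly from the geometry of the action $G \curvearrowright X$ and then verify the confining conditions using the structure of quasi-parabolic actions. First I would fix a basepoint $x_0 \in X$ and recall that, since the action is quasi-parabolic with unique fixed point $\xi$ on the Gromov boundary, the Busemann pseudocharacter $\beta$ is well-defined and measures (up to bounded error) the signed distance that a group element moves $x_0$ toward or away from $\xi$. The hypothesis $\beta(H) = 0$ means that $H$ acts "horospherically" — elements of $H$ move $x_0$ a bounded amount in the Busemann direction. I would then define $Q = \{h \in H : d_X(x_0, h x_0) \le R\}$ for a suitably large constant $R$, chosen large enough (in terms of the hyperbolicity constant $\delta$, the coboundedness constant, and the orbit map distortion) that the relevant closure-like properties hold. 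The set $Z_\beta \subseteq \Z^n$ is defined analogously as the elements of $\Z^n$ with small $\beta$-image, which by the structure of $\beta$ corresponds to elements whose orbit maps stay boundedly close to a fixed horosphere.

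The next step is to verify that $Q$ is \emph{confining under $\gamma$ with respect to $\beta$}. The "nearly closed under multiplication" condition — that $Q \cdot Q$ is contained in a bounded $\gamma$-thickening of $Q$, or some analogous statement from Section~\ref{section:main} — should follow from the triangle inequality together with a thin-triangles/Busemann-cocycle estimate: multiplying two elements of $Q$ moves the basepoint a controlled amount, and one uses the contraction coming from elements of $\Z^n$ with large $\beta$-image to push the product back into $Q$. The "attracted into $Q$" condition — that for $z \in \Z^n$ with $\beta(z)$ sufficiently large, $z Q z^{-1} \subseteq Q$ (or $zHz^{-1}$'s relevant part lands in $Q$ after enough steps) — should follow because conjugating by such a $z$ corresponds geometrically to flowing toward $\xi$ along the horospherical direction, which is a contraction on horospheres in a hyperbolic space; this is precisely the mechanism by which confinement arises in \cite{Amen}. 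I would also need to check \emph{strict} confinement, i.e. that $Q$ is a proper subset of $H$: this uses that the action has loxodromic elements (it is quasi-parabolic, not lineal), so not every element of $H \rtimes_\gamma \Z^n$, and in particular not every element of $H$ lifted appropriately, can have bounded orbit displacement — more carefully, one shows $Q \ne H$ because otherwise $H$ would act with bounded orbits and the action would reduce to one of $\Z^n$, contradicting that $\beta$ restricted to $H$ being zero forces the loxodromic behavior to come from $H$-cosets in a way incompatible with $Q = H$.

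Finally, I would establish the $G$-equivariant quasi-isometry between $X$ and $\Gamma(G, Q \cup Z_\beta)$. The forward map is the orbit map $g \mapsto g x_0$, which is $G$-equivariant by construction; coboundedness of the action guarantees it is coarsely surjective. The content is that it is a quasi-isometric embedding, and for this I would invoke Theorem~\ref{thm:main}: applying that theorem to the confining subset $Q$ just constructed yields a cobounded quasi-parabolic action $G \curvearrowright \Gamma(G, Q \cup Z_\beta)$ whose Busemann pseudocharacter is proportional to $\beta$. One then compares the two actions — $X$ and $\Gamma(G, Q\cup Z_\beta)$ — both cobounded, both quasi-parabolic, both with Busemann pseudocharacter proportional to $\beta$, and both "generated" in the appropriate sense by the same data (the bounded-displacement set and $Z_\beta$); a rigidity/uniqueness argument, essentially that a cobounded hyperbolic action is determined up to equivariant quasi-isometry by which elements move the basepoint boundedly, finishes the identification. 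The main obstacle I anticipate is the verification of the confining conditions with the correct quantifiers — in particular, pinning down the precise constant $R$ and showing the conjugation-contraction estimate holds uniformly, since in higher rank the "direction of contraction" is governed by the hyperplane $\ker \beta \subseteq \Z^n$ rather than a single generator, so one must be careful that elements of $Z_\beta$ genuinely act with bounded displacement along horospheres and that the contraction from $\beta$-positive elements is not spoiled by the multi-dimensional kernel; handling $Z_\beta$ correctly, rather than just a cyclic $\Z$, is the new difficulty compared to \cite{Amen}.
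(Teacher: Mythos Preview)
Your overall strategy matches the paper's: define $Q$ as (roughly) the elements of $H$ moving a basepoint a bounded amount, verify the confining axioms via a horospherical contraction estimate, and then identify $X$ with $\Gamma(G,Q\cup Z_\beta)$. However, two of your steps have genuine gaps.

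First, your definition $Q=\{h\in H : d_X(x_0,hx_0)\le R\}$ will not in general satisfy axiom (a) of Definition~\ref{def:confining}: you need $\gamma(z)(Q)\subseteq Q$ for \emph{every} $z$ with $\beta(z)\ge 0$, not just for $z$ with $\beta(z)$ large. The contraction estimate only kicks in once $\beta(z)$ exceeds some threshold $A_1$; for $0\le \beta(z)<A_1$ there is no reason $zhz^{-1}$ stays in the $R$--ball. The paper fixes this by explicitly closing the ball under such ``small'' elements, taking
\[
Q=\bigcup_{0\le \beta(z)\le A_1}\gamma(z)\big(B_Y(1,B_0)\cap H\big),
\]
and then checking that this enlarged set still has bounded diameter (so the other axioms survive). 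You should expect to need a similar closure step.

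Second, your final ``rigidity/uniqueness'' argument is not valid as stated: it is \emph{not} true that two cobounded quasi-parabolic actions with proportional Busemann pseudocharacters are equivariantly quasi-isometric (e.g.\ $BS(1,k)$ has several inequivalent quasi-parabolic structures all with Busemann pseudocharacter the projection to $\Z$). What actually works, and what the paper does, is a direct Schwarz--Milnor comparison: show that every element of $Q\cup Z_\beta$ moves $x_0$ a bounded amount (this uses that elements of $Z_\beta$ have bounded $|\beta|$ and hence bounded displacement along the axis of $\Z^n$, via a lemma like Lemma~\ref{lem:abelianaxisdiam}), and conversely that every element of the Schwarz--Milnor generating set $S=\{g:d(x_0,gx_0)\le R'\}$ has bounded word length in $Q\cup Z_\beta$ (decompose $g=hz$, bound $\|z\|_{Z_\beta}$ via $|\beta(z)|\le R'$, and bound $\|h\|_{Q\cup Z_\beta}$ by conjugating into $Q$). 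This is where the higher-rank issue you correctly flagged --- that $Z_\beta$ is not cyclic --- actually gets handled.
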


We note that the assumption that $\beta(H)=0$ is not too restrictive.  In particular, one can check that this holds whenever $H$ is abelian.

To illustrate the use of this theory, we give a complete description of the cobounded hyperbolic actions of  a class of groups related to solvable Baumslag-Solitar groups.  If $k=p_1^{m_1}\cdots p_n^{m_n}$ is the prime factorization of $k$, then the \textit{generalized solvable Baumslag-Solitar group} $G_k$ is defined as  
 $G_k := \Z\left[\frac1k\right] \rtimes_\gamma \Z^n$, where the image under $\gamma$ of the  $i$\textsuperscript{th} generator of $\Z^n$ acts on $\Z[\frac1k]$ by multiplication by $p_i^{m_i}$. Thus such a group has a presentation \[G_k = \left\langle a, t_1,\ldots,t_n \ \big\vert \ [t_i,t_j]=1, t_iat_i^{-1}=a^{p_i^{m_i}} \text{ for all } i,j\right\rangle,\] where $a$ corresponds to a normal generator of the subgroup $\Z[\frac{1}{k}]$.  When $k$ is a power of a prime, the cobounded hyperbolic actions of $G_k$  were classified in \cite{AR}. 

Generalized solvable Baumslag-Solitar groups were first studied in detail by Taback and Whyte, who note that they naturally arise as stabilizers of points at infinity in the  action of $\operatorname{PSL}_2(\Z\left[\frac{1}{k}\right])$ on the product of $\H^2$ with several Bruhat-Tits trees \cite{TabackWhyte} (one associated to $\operatorname{PSL}_2(\mathbb Q_{p_i})$, for each prime $p_i$). Thus, $G_k$ admits an action on $\H^2$ as well as actions on $n$ Bass-Serre trees. The groups $G_k$ have been studied from the perspective of quasi-isometries \cite{TabackWhyte},   twisted conjugacy classes and fixed point theory on compact manifolds \cite{TabackWong}, and  BNS invariants \cite{SgobbiWong,SSV}.

 \begin{restatable}{thm}{Gk} \label{thm:Z1k}
 For any $k\geq 2$ which is not a power of a prime, the poset $\mathcal H(G_k)$ has the following structure: $\mathcal H_{qp}(G_k)$, the subposet of quasi-parabolic actions, consists of $n+1$ incomparable elements. Each quasi-parabolic action dominates a single lineal action; there are uncountably many lineal actions; and all lineal actions dominate a single elliptic action (see Figure \ref{fig:Z[1/k]Structure}). Moreover, every element of $\mc H(G_k)$ contains either an action on a tree, the hyperbolic plane, or a point.
 \end{restatable}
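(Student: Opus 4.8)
The plan is to follow the strategy of \cite{Bal, AR, Largest}: reduce the classification of cobounded hyperbolic actions of $G_k$ to a classification of confining subsets, carry out that classification using the arithmetic of $\Z[\frac1k]$, and then read off the poset. Since $G_k$ is metabelian, hence amenable, the background results of Section~\ref{section:background} ensure that every cobounded hyperbolic action of $G_k$ is elliptic, lineal, or quasi-parabolic; a cobounded elliptic action is equivalent to the action on a point, giving the unique bottom element. For the lineal actions, a cobounded lineal action is determined up to equivalence by its Busemann quasicharacter up to scaling, and by amenability every homogeneous quasimorphism on $G_k$ is a homomorphism. Any homomorphism $G_k \to \R$ vanishes on $[G_k, G_k]$, which has finite index in $\Z[\frac1k]$ (it contains $p_i^{m_i}-1$ for every $i$ together with the $\Z[\frac1k]^\times$-action), so its restriction to $\Z[\frac1k]$ has finite, hence trivial, image; thus every such homomorphism factors through $G_k^{\mathrm{ab}} \twoheadrightarrow \Z^n$. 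Since $n \geq 2$, this produces uncountably many lineal actions; they form an antichain (a quasi-line does not collapse to an inequivalent quasi-line), and each dominates only the action on a point.

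The heart of the argument is the classification of the quasi-parabolic actions. By Theorems~\ref{thm:main} and~\ref{prop:main} — whose hypothesis $\beta(H)=0$ is automatic since $H=\Z[\frac1k]$ is abelian — the equivalence classes of cobounded quasi-parabolic actions of $G_k$ correspond to the strictly confining subsets $Q \subseteq \Z[\frac1k]$ (each confining with respect to some homomorphism $\rho\colon\Z^n\to\R$), up to the natural equivalence on confining subsets. To classify these I would use the realization of $\Z[\frac1k]$ as the ring of $S$-integers for $S = \{\infty, p_1, \dots, p_n\}$, i.e.\ its diagonal embedding $\Z[\frac1k] \hookrightarrow \R \times \prod_{i=1}^n \Q_{p_i}$ as a discrete cocompact subring, under which $\gamma(t_i)$ is multiplication by $p_i^{m_i}$, scaling the archimedean coordinate by $p_i^{m_i}$ and shifting the $p_i$-adic valuation by $m_i$. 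For any confining $Q$, the set $C_Q = \{\mathbf b \in \Z^n : \gamma(\mathbf b)(Q) \subseteq Q\}$ is a submonoid of $\Z^n$ containing a half-space $\{\rho \geq R\}$, and the "nearly closed under the group operation" condition should force $Q$, up to the equivalence, to be one of the placewise-bounded subsets $Q_T = \{x : v_{p_i}(x) \geq 0\ \forall i \in T \cap \{1,\dots,n\},\ |x|_\infty \leq 1 \text{ if } \infty \in T\}$ for $T \subseteq S$. A linear-programming argument — using that the hyperplane $\sum_i m_i(\log p_i)b_i = 0$ contains no nonzero lattice point, whereas $\{\rho \geq R\} \subseteq \{b_i \geq 0\}$ forces $\rho$ to be a positive multiple of the $i$-th coordinate — then shows that $Q_T$ is confining only when $T = \{i\}$, giving the Bass--Serre tree action for the ascending HNN decomposition $G_k = \langle \Z[\frac{1}{k/p_i^{m_i}}] \rtimes \Z^{n-1}, t_i\rangle$ with Busemann character proportional to the $i$-th coordinate, or $T = \{\infty\}$, giving the action on $\H^2$ arising from $G_k < \operatorname{PSL}_2(\R)$ with Busemann character proportional to $(m_1\log p_1,\dots,m_n\log p_n)$; the remaining $T$ are non-generating or non-confining, and one checks that these two families are strictly (not merely) confining. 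This yields exactly $n+1$ quasi-parabolic actions. I expect this classification — in particular ruling out confining subsets not equivalent to some $Q_T$, and the strict-versus-non-strict dichotomy — to be the main obstacle and the genuinely new work relative to \cite{AR}, where the rank-one hypothesis makes the monoid $C_Q$ one-dimensional and the analysis comparatively rigid.

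Finally I would assemble the poset. For $n \geq 2$ the $n+1$ Busemann characters above are pairwise non-proportional (the all-positive vector is proportional to no coordinate vector, and the coordinate vectors are pairwise non-proportional), and since a quasi-parabolic action dominating another forces their Busemann characters to be proportional, the $n+1$ quasi-parabolic actions are pairwise incomparable and are the maximal elements of $\mc H(G_k)$. Collapsing the horospheres of each quasi-parabolic action produces a dominated lineal action with the same (proportional) Busemann character, and as a dominated lineal action is determined by its Busemann character, this is the unique lineal action that each quasi-parabolic action dominates; every lineal action in turn dominates the action on a point. This is precisely the structure in Figure~\ref{fig:Z[1/k]Structure}. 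The models in the last sentence are then immediate: the $n$ tree classes and the uncountably many lineal classes contain actions on trees (a line being a tree), the remaining quasi-parabolic class contains the action on $\H^2$, and the elliptic class contains the action on a point.
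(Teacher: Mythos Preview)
Your overall architecture is correct and matches the paper: reduce to elliptic/lineal/quasi-parabolic (no general type, by solvability), classify lineal structures via homomorphisms $\Z^n\to\R$ factoring through the abelianization, and classify quasi-parabolic structures via Theorems~\ref{thm:main} and~\ref{prop:main} by classifying strictly confining subsets of $\Z[\frac1k]$. The final list of $n+1$ structures and their geometric models also agrees with the paper.

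The substantive divergence is in how the confining subsets are classified. You propose a direct adelic approach: embed $\Z[\frac1k]\hookrightarrow \R\times\prod_i\Q_{p_i}$, posit that every confining subset is equivalent to a ``placewise ball'' $Q_T$, and then use a half-space argument to see that only $|T|=1$ survives. The paper instead exploits the single element $t_1\cdots t_n$, which acts on $\Z[\frac1k]$ as multiplication by $k$: splitting on the sign of $\rho(t_1\cdots t_n)$, any subset confining for $G_k$ is automatically confining for $BS(1,k)=\Z[\frac1k]\rtimes_\alpha\Z$ under $\alpha$ or $\alpha^{-1}$ (Lemmas~\ref{lem:QBS} and~\ref{lem:QBS-}), and the classification of those is already done in \cite{AR} (Proposition~\ref{thm:BS1k}). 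One then only has to check compatibility with the higher-rank $\rho$ (Lemmas~\ref{lem:rho=0}, \ref{lem:rho>0}, \ref{lem:rho<0}). Your half-space observation---that $\{\rho\geq 0\}$ cannot sit inside an intersection of two distinct coordinate half-spaces---is essentially the content of the compatibility step, but the paper never needs your harder assertion that an \emph{arbitrary} confining $Q$ is equivalent to some $Q_T$; it gets this for free from the $BS(1,k)$ reduction. That assertion is exactly the ``main obstacle'' you flag, and the paper's point is that it is not new work: it is Proposition~\ref{thm:BS1k}.

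One smaller gap: your incomparability argument assumes that if one quasi-parabolic structure dominates another then their Busemann characters are proportional. As stated this is not obvious (containment of integer kernels of two linear forms on $\Z^n$ does not force proportionality; consider $\beta_2(a,b)=a+\sqrt{2}\,b$ versus $\beta_1(a,b)=a$). The paper argues more concretely: for $[S_i]$ versus $[S_j]$ it exhibits $t_i$ as loxodromic in one and elliptic in the other, and for $[S_i]$ versus $[S_-]$ it uses that the global fixed point is the \emph{repelling} fixed point of $t_i$ in one structure and the \emph{attracting} one in the other, which is incompatible with comparability of quasi-parabolic structures.
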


\begin{figure}[h]
\centering
\def\svgscale{0.6}
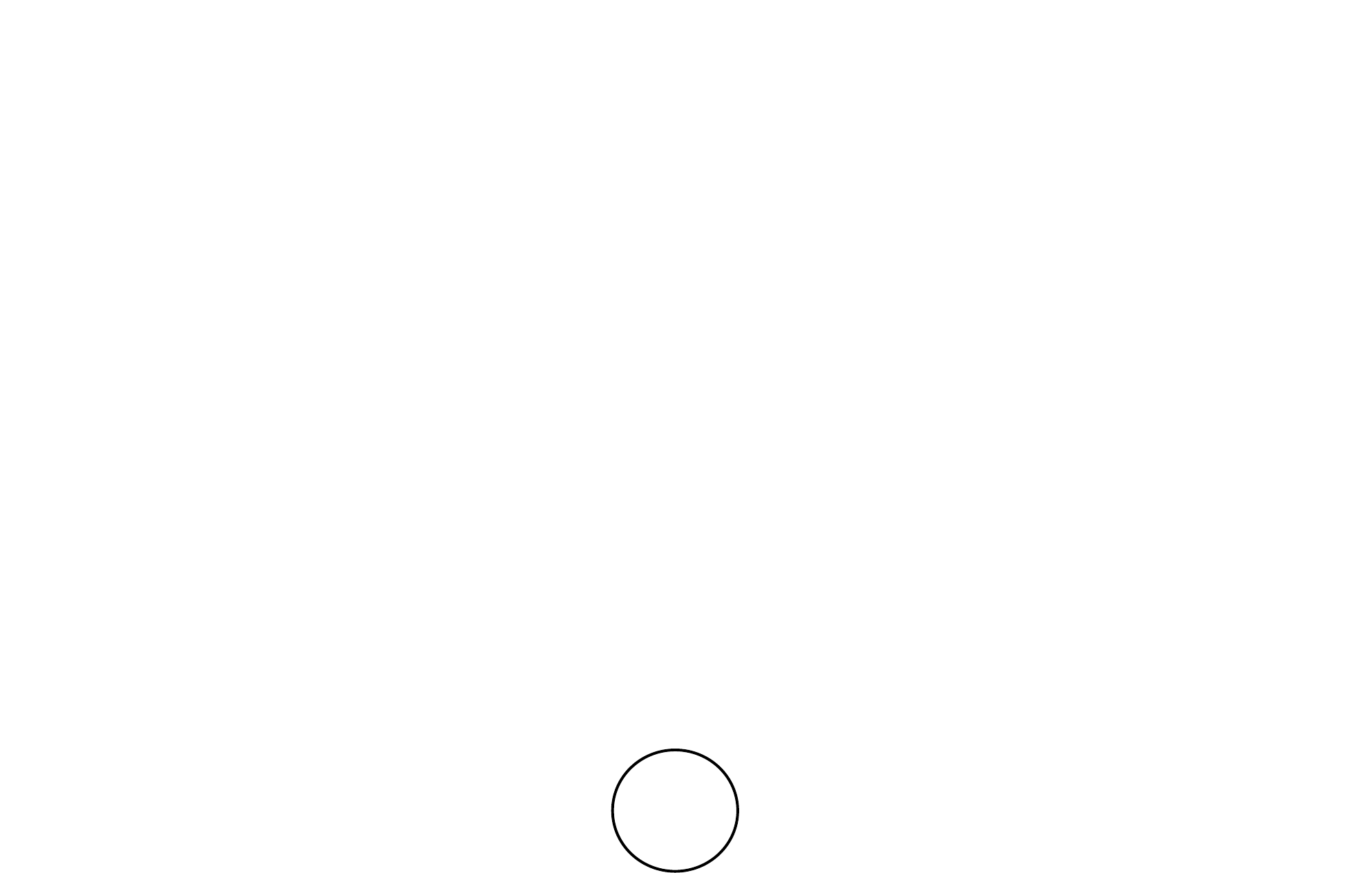
\caption{The poset of hyperbolic structures on the group $G_k=\Z[\frac1k]\rtimes_\gamma \Z^n$.}
\label{fig:Z[1/k]Structure}

\end{figure} 

Theorem \ref{thm:Z1k} reveals that the natural actions of $\operatorname{PSL}_2\left(\Z\left[\frac{1}{k}\right]\right)$ on the hyperbolic plane and Bruhat-Tits trees described give rise to all of the hyperbolic actions of $G_k$, except for actions on points and lines (see Section \ref{section:actiongeometry} and, in particular, Figure \ref{fig:bstrees}). Using a characterization of the BNS invariant of a finitely generated group in terms of its actions on trees due to Brown \cite{Brown}, we use Theorem \ref{thm:Z1k} to compute the BNS invariants of the groups $G_k$, recovering a result of Sgobbi and Wong \cite{SgobbiWong};  see Section \ref{section:BNS}. Taback--Whyte classified the groups $G_k$ up to quasi-isometry, and, in particular, by \cite[Theorem~1.4]{TabackWhyte} the following corollary is immediate.

 \begin{cor}
If $G_k$ and $G_\ell$ are quasi-isometric, then the posets $\mathcal H(G_k)$ and $\mathcal H(G_\ell)$ are isomorphic.
 \end{cor}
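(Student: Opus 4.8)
The plan is to deduce the corollary by combining Theorem~\ref{thm:Z1k} -- together with the computation of $\mathcal{H}(G_k)$ for prime-power $k$ carried out in \cite{AR} -- with the quasi-isometry classification of the groups $G_k$ given in \cite[Theorem~1.4]{TabackWhyte}. Although $G\mapsto\mathcal{H}(G)$ is visibly an isomorphism invariant of a group, it is not a quasi-isometry invariant in general; the content of the corollary is that, on the family $\{G_k\}_{k\ge 2}$, it is nevertheless constant along quasi-isometry classes. This will follow from two observations, of opposite flavors, which together pin down the relevant equivalence relations.

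The first observation is that, up to poset isomorphism, $\mathcal{H}(G_k)$ depends only on the number $n$ of distinct primes dividing $k$. When $n\ge 2$ this is precisely Theorem~\ref{thm:Z1k}: the subposet of quasi-parabolic structures is an antichain of size $n+1$, each element dominating a single common lineal structure, there are $2^{\aleph_0}$ lineal structures, and all of them dominate a single elliptic structure. When $n=1$, the group $G_k$ is a solvable Baumslag--Solitar group, and by \cite{AR} the poset $\mathcal{H}(G_k)$ is a fixed finite poset, independent of the choice of prime power (with quasi-parabolic subposet an antichain of size $2$ and a unique lineal structure). Hence $\mathcal{H}(G_k)\cong\mathcal{H}(G_\ell)$ whenever $k$ and $\ell$ have the same number of distinct prime divisors. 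The second observation supplies the reverse containment of equivalence relations: by \cite[Theorem~1.4]{TabackWhyte}, if $G_k$ and $G_\ell$ are quasi-isometric, then $k$ and $\ell$ have the same number of distinct prime divisors (in particular $k$ is a prime power exactly when $\ell$ is, so the two regimes never get mixed). Combining the two observations, $G_k$ quasi-isometric to $G_\ell$ forces $\mathcal{H}(G_k)\cong\mathcal{H}(G_\ell)$, as claimed.

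I do not expect any substantive obstacle -- this is why the statement is recorded as a corollary rather than a theorem. The only point that needs care is the bookkeeping that aligns the three inputs, namely the poset description of Theorem~\ref{thm:Z1k} for $n\ge 2$, the poset description of \cite{AR} for prime powers, and the arithmetic invariant appearing in \cite[Theorem~1.4]{TabackWhyte}, so that ``having the same number of distinct prime divisors'' is seen to be exactly the relation under which the isomorphism type of $\mathcal{H}(G_k)$ is constant. Once the statements are phrased compatibly, the implication is purely formal.
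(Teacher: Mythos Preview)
Your proposal is correct and follows exactly the approach the paper intends: the paper simply declares the corollary ``immediate'' from \cite[Theorem~1.4]{TabackWhyte} after stating Theorem~\ref{thm:Z1k}, and you have spelled out the implicit reasoning, including the prime-power case handled by \cite{AR}. One small inaccuracy: the $n+1$ quasi-parabolic structures do not all dominate a \emph{common} lineal structure---each dominates its own distinct lineal structure (the one determined by its Busemann pseudocharacter)---but this does not affect the argument, since the resulting poset isomorphism type still depends only on $n$.
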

 
 The proof of Theorem \ref{thm:Z1k}  reveals that cobounded hyperbolic actions of solvable groups with higher rank abelianizations may sometimes be reduced to the rank one case. Specifically, to prove Theorem \ref{thm:Z1k} we utilize the classification of cobounded hyperbolic actions of the Baumslag-Solitar group $BS(1,k)$ given in \cite{AR}. In light of this and the classification of cobounded hyperbolic actions of wreath products $(\Z/n\Z)\wr \Z$ given in \cite{Bal}, a natural next step would be to apply Theorems \ref{thm:main} and \ref{prop:main} and the techniques developed in \cite{Bal,AR,Largest} to attempt to classify the hyperbolic actions of wreath products $A\wr B$ and extensions $A\rtimes B$ when $A$ and $B$ are finitely generated abelian groups.   
 
Throughout the paper we assume (and rely heavily on the fact that) our groups have a decomposition as a semidirect product.  More generally, one could try to generalize this machinery to groups for which the short exact sequence associated to the map to the abelianization does not split.  For example, it would be interesting to understand if the techniques in this paper could be extended to describe $\mc H(G)$ when $G$ is a finitely generated metabelian group. \\

\noindent{\bf Acknowledgements.} The authors thank the anonymous referee for a careful reading of the paper and useful comments. The first author was partially supported by NSF Awards DMS-1803368 and DMS-2106906.  The second author was supported by the Deutsche Forschungsgemeinschaft (DFG, German Research Foundation) -Project-ID 427320536 – SFB 1442, as well as under Germany's Excellence Strategy EXC 2044 390685587, Mathematics Münster: Dynamics–Geometry–Structure.  The third author was partially supported by NSF grant DMS-1840190.

%%%%%%%%%%%%%%%%%%%%%%%%%%%%%%%%%%%%%%%%%%%%%%%%%%%%%%%%%
\section{Background} \label{section:background}

\subsection{Comparing generating sets and group actions} 
 
Throughout this paper, all group actions on metric spaces are assumed to be isometric. Given a metric space $X$, we denote by $\d_X$ the distance function on $X$. If $G$ is a group and $S$ is a generating set of $G$, then $\|\cdot\|_S$ denotes the word norm on $G$ and $d_S$ denotes the word metric $d_S(g,h)=\|gh^{-1}\|_S$.

\begin{defn}[{\cite[Definition 1.1]{ABO}}]\label{def-GG}
Let $S$, $T$ be two (possibly infinite) generating sets of a group $G$. We say that $S$ is \emph{dominated} by $T$, written $S\preceq T$, if the identity map on $G$ induces a Lipschitz map between metric spaces $(G, \d_T)\to (G, \d_S)$. This is equivalent to the requirement that $ \sup_{t\in T}\|t\|_S<\infty$.  The relation $\preceq$ is a preorder on the set of generating sets of $G$, and therefore it induces an equivalence relation in the standard way:
$$
S\sim T \;\; \Leftrightarrow \;\; S\preceq T \; {\rm and}\; T\preceq S.
$$
This is equivalent to the condition that the Cayley graphs $\Ga(G,S)$ and $\Ga(G, T)$ are $G$--equivariantly quasi-isometric. We denote by $[S]$ the equivalence class of a generating set $S$, and by $\GG$ the set of all equivalence classes of generating sets of $G$. The preorder $\preceq$ induces a partial order $\preccurlyeq $ on $\GG$ by the rule
$$
[S]\preccurlyeq [T] \;\; \Leftrightarrow \;\; S\preceq T.
$$
\end{defn}

For example, all finite generating sets of a finitely generated group are equivalent and the equivalence class containing any finite generating set is the largest element of $\GG$. For every group $G$, the smallest element of $\GG$ is $[G]$. Note also that this order is ``inclusion reversing": if $S$ and $T$ are generating sets of $G$ such that $S\subseteq T$, then $T\preceq S$.

To define a hyperbolic structure on a group, we first recall the definition of a hyperbolic space. In this paper we employ the definition of hyperbolicity via the Rips condition. 

\begin{defn} A metric space $X$ is called \emph{$\delta$--hyperbolic} if it is geodesic and for any geodesic triangle $\Delta $ in $X$, each side of $\Delta $ is contained in the union of the closed $\delta$--neighborhoods of the other two sides.
\end{defn} 

\begin{defn}[{\cite[Definition 1.2]{ABO}}]
A \emph{hyperbolic structure} on $G$ is an equivalence class $[S]\in \GG$ such that $\Gamma (G,S)$ is hyperbolic. Since hyperbolicity of a space is a quasi-isometry invariant, this definition is independent of the choice of a particular representative in the equivalence class $[S]$. We denote the set of hyperbolic structures by $\HG$. It is a sub-poset of $\GG$ with the restriction of the partial order on $\GG$.  \end{defn}

The poset $\HG$ classifies the \emph{cobounded} hyperbolic actions of $G$ up to coarsely equivariant quasi-isometry, as we now summarize.
\begin{defn}
An action $G\curvearrowright X$ is \emph{cobounded} if for some (equivalently any) $x\in X$ there exists $R>0$ such that every point of $X$ is distance at most $ R$ from some point of the orbit $Gx$. Given two cobounded hyperbolic actions $G\curvearrowright X$ and $G\curvearrowright Y$, a map $f:X\to Y$ is \emph{coarsely equivariant} if for any $x\in X$ we have \[\sup_{g\in G} d_Y(f(gx),gf(x))<\infty.\] Given $C>0$, the map $f$ is \emph{$C$--coarsely Lipschitz} if \[d_Y(f(x),f(y))\leq Cd_X(x,y)+C\] for all $x,y\in X$. It is a \emph{$C$--quasi-isometry} if it is $C$--coarsely Lipschitz and also satisfies \[\frac{1}{C}d_X(x,y)-C\leq d_Y(f(x),f(y)).\]
\end{defn}

Given  a cobounded hyperbolic action $G\curvearrowright X$, there is an associated hyperbolic structure given by the \emph{Schwarz-Milnor Lemma}:

\begin{lem}[{\cite[Lemma 3.11]{ABO}}]\label{lem:MS}
Let $G\curvearrowright X$ be a cobounded hyperbolic action of $G$. Let $B\subseteq X$ be a bounded subset such that $\displaystyle \bigcup_{g\in G} gB=X$. Let $D=\operatorname{diam}(B)$ and let $x\in B$. Then $G$ is generated by the set \[S=\{g\in G: d_X(x,gx)\leq 2D+1\},\] and $X$ is $G$--equivariantly quasi-isometric to $\Gamma(G,S)$.
\end{lem}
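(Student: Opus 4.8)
The plan is to run the standard Milnor--Schwarz argument, using coboundedness to translate between the geometry of $X$ and the word metric $d_S$ on $G$; the hyperbolicity of $X$ is used only through the fact that $X$ is geodesic. Throughout, write $d=d_X$, fix the basepoint $x\in B$, and note that since $x\in B$ and $\bigcup_{g\in G}gB=X$ with $\operatorname{diam}(B)=D$, the orbit $Gx$ is $D$--coarsely dense in $X$.

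First I would simultaneously show that $S$ generates $G$ and that $\|g\|_S\le d(x,gx)+1$ for every $g\in G$. Given $g$, choose a geodesic from $x$ to $gx$ and subdivide it by points $x=p_0,p_1,\dots,p_m=gx$ with $d(p_i,p_{i+1})\le 1$ and $m\le d(x,gx)+1$. Pick $g_i\in G$ with $d(p_i,g_ix)\le D$, choosing $g_0=e$ and $g_m=g$. Then for each $i$,
\[
d(x,g_i^{-1}g_{i+1}x)=d(g_ix,g_{i+1}x)\le D+1+D=2D+1 ,
\]
so $s_i:=g_i^{-1}g_{i+1}\in S$ and $g=s_0s_1\cdots s_{m-1}$ is a word of length $m$ in $S$. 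Applying this with $g$ replaced by $g^{-1}h$ gives $d_S(g,h)\le d(gx,hx)+1$ for all $g,h\in G$.

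Next I would prove the complementary Lipschitz bound: if $g=s_1\cdots s_\ell$ with $s_i\in S$ and $\ell=\|g\|_S$, then by $G$--invariance of $d$ and the triangle inequality,
\[
d(x,gx)\le\sum_{i=1}^{\ell}d(s_1\cdots s_{i-1}x,\,s_1\cdots s_ix)=\sum_{i=1}^{\ell}d(x,s_ix)\le(2D+1)\ell ,
\]
so $d(gx,hx)\le(2D+1)\,d_S(g,h)$. Together with the previous paragraph and the $D$--coarse density of $Gx$, this shows the orbit map $o\colon(G,d_S)\to X$, $g\mapsto gx$, is a quasi-isometry. Finally, I would extend $o$ to a map $\Gamma(G,S)\to X$ by sending each edge $\{g,gs\}$ to a geodesic from $gx$ to $gsx$ (of length at most $2D+1$); since the vertex set of $\Gamma(G,S)$ is $\frac{1}{2}$--coarsely dense in the graph and carries exactly the metric $d_S$, this extension is still a quasi-isometry, and since $o(hg)=h\cdot o(g)$ it is coarsely $G$--equivariant. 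This yields the claimed $G$--equivariant quasi-isometry $\Gamma(G,S)\simeq X$.

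There is no real obstacle here: this is the classical Milnor--Schwarz argument (recorded as \cite[Lemma~3.11]{ABO}), and the only care needed is in bookkeeping the quasi-isometry constants and in checking that the passage from $(G,d_S)$ to the graph $\Gamma(G,S)$ preserves both coboundedness and equivariance.
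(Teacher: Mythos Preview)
Your argument is the standard Milnor--Schwarz proof and is correct. Note, however, that the paper does not supply its own proof of this lemma: it is quoted verbatim from \cite[Lemma~3.11]{ABO} and used as a black box, so there is no proof in the paper to compare against. Your observation that hyperbolicity plays no role beyond $X$ being geodesic is also correct; the hypothesis is there only because the authors are cataloguing hyperbolic actions.
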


 Thus, up to  equivariant quasi-isometries, hyperbolic actions of $G$ correspond to actions of $G$ on its hyperbolic Cayley graphs. There is an equivalence relation on hyperbolic actions given by $G$--coarsely equivariant quasi-isometry and a preorder on hyperbolic actions given by $G$--coarsely equivariant coarsely Lipschitz maps. With these relations, the set of \emph{equivalence classes of cobounded hyperbolic actions} of $G$ becomes a poset. This poset turns out to be isomorphic to $\HG$. We refer the reader to \cite[Section 3]{ABO} for more details.

\subsection{General classification of hyperbolic actions} 
We now recall some standard facts about groups acting on hyperbolic spaces. For details the reader is referred to \cite{Gro}. Given a hyperbolic space $X$, we denote by $\partial X$ its Gromov boundary with the visual topology. In general, $X$ is not assumed to be proper, and its boundary is defined as the set of equivalence classes of sequences convergent at infinity. Given a group $G$ acting on a hyperbolic space $X$,  we denote by $\Lambda (G)$ the set of limit points of $G$ on $\partial X$. That is, $$\Lambda (G)=\partial X\cap \overline{Gx},$$ where $\overline{Gx}$ denotes the closure of a $G$--orbit in $X\cup \partial X$, for any choice of basepoint $x\in X$.  This definition is independent of the choice of $x\in X$. The action of $G$ is called \emph{elementary} if $|\Lambda (G)|\le 2$ and \emph{non-elementary} otherwise. The action of $G$ on $X$ naturally extends to an action of $G$ on $\partial X$ by homeomorphisms.

\begin{defn} Given an action of a group $G$ on a hyperbolic space $X$, an element $g\in G$ is called
\begin{enumerate}
\item[(i)] \emph{elliptic} if $\langle g\rangle $ has bounded orbits;
\item[(ii)] \emph{loxodromic} if the map $n \mapsto g^nx, n \in \Z$ is a quasi-isometric embedding for some (equivalently any) $x \in X$;
\item[(iii)] \emph{parabolic} otherwise.
\end{enumerate} \end{defn}

Every loxodromic element $g\in G$ has exactly $2$ fixed points $g{^{\pm \infty}}$ on $\partial X$, where $g{^{+\infty}}$ (respectively, $g{^{-\infty}}$) is the limit of the sequence $(g{^{n}}x)_{n\in \mathbb N}$ (respectively, $(g{^{-n}}x)_{n\in \mathbb N}$) for any fixed $x\in X$. Thus $\Lambda (\langle g\rangle) =\{ g{^{\pm \infty}}\}$.

The following theorem summarizes the standard classification of groups acting on hyperbolic spaces due to Gromov \cite[Section 8.2]{Gro} and the results  \cite[Propositions 3.1 and 3.2]{Amen}.

\begin{thm}\label{ClassHypAct}
Let $G$ be a group acting on a hyperbolic space $X$. Then exactly one of the following conditions holds.
\begin{enumerate}
\item[1)] $|\Lambda (G)|=0$. Equivalently,  $G$ has bounded orbits. In this case the action of $G$ is called \emph{elliptic}.

\item[2)] $|\Lambda (G)|=1$. In this case the action of $G$ is called \emph{parabolic}. A parabolic action cannot be cobounded. 

\item[3)] $|\Lambda (G)|=2$. Equivalently, $G$ contains a loxodromic element and any two loxodromic elements have the same limit points on $\partial X$. In this case the action of $G$ is called \emph{lineal}.

\item[4)] $|\Lambda (G)|=\infty$. Then $G$ always contains loxodromic elements. In turn, this case breaks into two subcases.
\begin{enumerate}
\item[(a)] $G$ fixes a point of $\partial X$. Equivalently, any two loxodromic elements of $G$ have a common limit point on the boundary. In this case the action of $G$ is called \emph{quasi-parabolic} or \emph{focal}. 
\item[(b)] $G$ does not fix any point of $\partial X$. In this case the action of $G$ is said to be of \emph{general type}.
\end{enumerate}
\end{enumerate}
\end{thm}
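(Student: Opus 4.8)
The plan is to organize the argument around the cardinality of the limit set $\Lambda(G)\subseteq\partial X$, showing it always lies in $\{0,1,2,\infty\}$ and matching each of the five possibilities with its named type; in effect this reassembles the classical classification of Gromov~\cite{Gro} with \cite[Propositions 3.1 and 3.2]{Amen}. I would lean on two classical ingredients that package essentially all of the $\delta$-hyperbolic geometry. The first is a loxodromic-detection lemma: if $(g_n)\subseteq G$ and a basepoint $x\in X$ satisfy $g_nx\to\xi^+$ and $g_n^{-1}x\to\xi^-$ in $\partial X$ with $\xi^+\neq\xi^-$, then $g_n$ is loxodromic for all large $n$, with fixed points converging to $\xi^\pm$. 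The second is the standard north--south/ping-pong dynamics of loxodromic isometries on $X\cup\partial X$, together with the fact that once $G$ contains a loxodromic element, the attracting fixed points of loxodromics are dense in $\Lambda(G)$.

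The one genuinely substantive step is to show that $|\Lambda(G)|\geq 3$ forces $|\Lambda(G)|=\infty$. From three distinct limit points and approximating sequences in $G$, the standard construction (combining two approximating sequences and invoking the detection lemma) yields a loxodromic $g$, and then a limit point outside $\{g^{\pm\infty}\}$ together with a further sequence yields a loxodromic $h$ with $\{h^{\pm\infty}\}\neq\{g^{\pm\infty}\}$. If all four endpoints are distinct, ping-pong between high powers of $g$ and $h$ produces a nonabelian free subgroup and hence infinitely many distinct loxodromic fixed points inside $\Lambda(G)$; if $g$ and $h$ share exactly one endpoint, conjugating $h$ by the powers of $g$ sends its unshared endpoint to infinitely many distinct points of $\Lambda(G)$. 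Either way $|\Lambda(G)|=\infty$, so $|\Lambda(G)|\in\{0,1,2,\infty\}$.

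Next I would run through the cases. If $|\Lambda(G)|=0$: ``bounded orbits $\Rightarrow \Lambda(G)=\emptyset$'' is immediate, and conversely, if $Gx$ is unbounded I would pick $g_n$ with $d_X(x,g_nx)\to\infty$ and, using only $\delta$-hyperbolicity and geodesicity (a Gromov-product extraction, no local compactness), pass to a subsequence with $g_nx$ converging to a point of $\partial X\cap\overline{Gx}=\Lambda(G)$ --- so this is exactly the elliptic case. If $|\Lambda(G)|=1$, say $\Lambda(G)=\{\xi\}$: by the previous case $G$ has unbounded orbits, and the standard argument (cf.\ \cite[Proposition 3.1]{Amen}) shows coboundedness would make $X$ coarsely a single quasigeodesic ray asymptotic to $\xi$, which admits no isometric action with unbounded orbits --- a contradiction, so a parabolic action cannot be cobounded. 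If $|\Lambda(G)|=2$, write $\Lambda(G)=\{\xi^-,\xi^+\}$: this set is $G$-invariant, an index-at-most-two subgroup fixes both points, and the detection lemma (supplemented, when $g_nx$ and $g_n^{-1}x$ approach the same endpoint, by a twisting argument combining sequences approximating $\xi^+$ and $\xi^-$) produces a loxodromic whose fixed-point pair is necessarily $\{\xi^-,\xi^+\}$; every loxodromic has this same pair since its fixed points lie in $\Lambda(G)$, so the action is lineal. Finally, if $|\Lambda(G)|=\infty$: the construction above already exhibits a loxodromic, so $G$ always contains loxodromics; if in addition $G$ fixes a boundary point $\xi$, then every loxodromic fixes $\xi$, and since a loxodromic isometry has exactly two fixed boundary points, $\xi\in\{g^{\pm\infty}\}$, so all loxodromics share $\xi$ and the action is quasi-parabolic (focal); conversely, if all loxodromics share a common limit point, then by density of loxodromic fixed points in the infinite set $\Lambda(G)$ that point is unique, hence canonically defined and $G$-fixed. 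The remaining subcase --- no $G$-fixed boundary point --- is of general type by definition.

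The main obstacle is concentrated in that substantive step and the two ingredients it invokes: the detection lemma and the ping-pong/density statements carry almost all of the hyperbolic geometry, while everything else is bookkeeping with cardinalities and with the definitions of the five types. The only place where the absence of properness of $X$ must be watched is in the repeated ``extract a convergent subsequence'' moves; one must check these follow from Gromov products and the triangle inequality alone, which is precisely how they are handled in \cite{Gro} and \cite{Amen}.
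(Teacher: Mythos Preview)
The paper does not give its own proof of this theorem: it is stated as background and attributed to Gromov \cite[Section 8.2]{Gro} together with \cite[Propositions 3.1 and 3.2]{Amen}, with no argument supplied. Your proposal is therefore not competing with a proof in the paper but rather reconstructing the classical argument from those references, and in outline it does so faithfully: the reduction to $|\Lambda(G)|\in\{0,1,2,\infty\}$ via a loxodromic-detection lemma and ping-pong, followed by a case analysis, is exactly the standard route.

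A couple of places deserve tightening. First, in the $|\Lambda(G)|=0$ case, the claim that an unbounded orbit forces a limit point is genuinely delicate when $X$ is not proper: an arbitrary unbounded sequence in a non-proper hyperbolic space need not have a subsequence converging at infinity. The usual fix is to argue with Gromov products directly---either $(g_nx\mid g_mx)_x$ stays bounded along cofinal pairs, in which case one manufactures two distinct boundary accumulation points (and hence a loxodromic), or it tends to infinity along a subsequence, which is precisely the Cauchy--Gromov condition defining a boundary point. You gesture at this but should make the dichotomy explicit. Second, in the quasi-parabolic direction ``all loxodromics share a common limit point $\Rightarrow$ $G$ fixes a boundary point,'' your appeal to density of loxodromic endpoints in $\Lambda(G)$ to conclude the shared point is \emph{unique} is correct, but the cleaner argument is that the shared point, being canonically attached to the set of loxodromics (which is conjugation-invariant), is automatically $G$-invariant. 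These are refinements rather than gaps; your overall architecture matches the cited sources.
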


The following classification of hyperbolic structures is an immediate consequence of the above theorem.
\begin{thm}[{\cite[Theorem 4.6]{ABO}}]\label{main00}
For every group $G$, $$\Hl(G)=\Hl_e(G)\sqcup \Hl_{\ell} (G)\sqcup \Hl_{qp} (G)\sqcup \Hl_{gt}(G)$$ 
where the sets of elliptic, lineal, quasi-parabolic, and general type hyperbolic structures on $G$ are denoted by $\Hl_e(G)$, $\Hl_{\ell} (G)$, $\Hl_{qp} (G)$, and $\Hl_{gt}(G)$ respectively. 
\end{thm}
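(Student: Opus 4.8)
The plan is to associate to each hyperbolic structure the natural action of $G$ on the corresponding Cayley graph and then invoke Theorem \ref{ClassHypAct}. Given $[S]\in\Hl(G)$, the space $\Ga(G,S)$ is hyperbolic by definition, and the left multiplication action $G\acts\Ga(G,S)$ is cobounded since $G$ acts transitively on the vertex set. Hence, by Theorem \ref{ClassHypAct}, this action satisfies \emph{exactly one} of conditions $1)$, $3)$, $4a)$, $4b)$; condition $2)$ (parabolic) is ruled out precisely because a cobounded action cannot be parabolic. I would then \emph{define} $[S]$ to lie in $\Hl_e(G)$, $\Hl_\ell(G)$, $\Hl_{qp}(G)$, or $\Hl_{gt}(G)$ according to which condition holds. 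Granting that this assignment does not depend on the chosen representative $S$, disjointness of the four sets is immediate from the word ``exactly'' in Theorem \ref{ClassHypAct}, and exhaustiveness is immediate because every $[S]$ gets assigned to some part; together these give the claimed decomposition $\Hl(G)=\Hl_e(G)\sqcup\Hl_\ell(G)\sqcup\Hl_{qp}(G)\sqcup\Hl_{gt}(G)$.

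The substantive point, and the step I would spend the most care on, is well-definedness: if $S\sim T$, then $[S]$ must receive the same label whether one computes with $\Ga(G,S)$ or $\Ga(G,T)$. By Definition \ref{def-GG}, $S\sim T$ means there is a $G$--equivariant quasi-isometry $f\colon\Ga(G,S)\to\Ga(G,T)$. The key classical fact I would use — valid for (not necessarily proper) geodesic hyperbolic spaces, since stability of quasi-geodesics holds without local compactness — is that $f$ extends to a homeomorphism $\partial f\colon\partial\Ga(G,S)\to\partial\Ga(G,T)$ of Gromov boundaries, and that this extension is again $G$--equivariant because $f$ is. A $G$--equivariant homeomorphism of boundaries carries the closure of a $G$--orbit to the closure of a $G$--orbit, hence maps $\Lambda(G)\subseteq\partial\Ga(G,S)$ bijectively onto $\Lambda(G)\subseteq\partial\Ga(G,T)$; in particular $|\Lambda(G)|$ is the same computed in either Cayley graph, which separates cases $1)$, $3)$, and $4)$. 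To distinguish $4a)$ from $4b)$, I would note that $\partial f$ conjugates the two boundary actions of $G$, so $G$ fixes a point of $\partial\Ga(G,S)$ if and only if it fixes a point of $\partial\Ga(G,T)$. Therefore the type of $[S]$ is intrinsic to the equivalence class, and the partition is well defined.

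The hard part is thus entirely the boundary-extension step, and specifically carrying it out in the absence of properness: one must check that a $G$--equivariant quasi-isometry sends sequences converging at infinity to sequences converging at infinity, respects the equivalence relation defining $\partial X$, and is a homeomorphism for the visual topologies. This is standard bookkeeping with Gromov products controlled under quasi-isometries together with the Morse lemma, and none of it uses local compactness; I would cite \cite[Section 3]{ABO}, where these facts are assembled in exactly the generality needed (or reprove the Gromov-product estimate directly). With that in hand, the decomposition of $\Hl(G)$ follows as above.
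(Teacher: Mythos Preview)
Your proposal is correct and follows exactly the approach the paper indicates: the paper simply states that this decomposition is ``an immediate consequence'' of Theorem~\ref{ClassHypAct} (citing \cite[Theorem 4.6]{ABO}), and your argument spells out precisely that consequence, including the well-definedness check via $G$--equivariant boundary extensions that the paper leaves implicit.
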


\subsection{The Busemann pseudocharacter} A function $q\colon G\to \mathbb R$ is a \emph{quasi-character} (or \emph{quasi-morphism}) if there exists a constant $D$ such that $$|q(gh)-q(g)-q(h)|\le D$$ for all $g,h\in G$. We say that $q$ has \emph{defect at most $D$}. If, in addition, the restriction of $q$ to every cyclic subgroup of $G$ is a homomorphism, $q$ is called a \emph{pseudocharacter} (or \emph{homogeneous quasi-morphism}). Every quasi-character $q$ gives rise to a pseudocharacter $\beta$ defined by the following limit, which always exists, for any $g\in G$:
$$
\beta(g)=\lim_{n\to \infty} \frac{q(g{^{n}})}n.
$$
The function $\beta$ is called the \emph{homogenization of $q$.} It is straightforward to check that
$$
 |\beta(g) -q(g)|\le D
$$
for all $g\in G$ if $q$ has defect at most $D$.

Given any action of a group on a hyperbolic space fixing a point on the boundary, one can associate the \emph{Busemann pseudocharacter}. We briefly recall the construction and necessary properties here, and refer the reader to \cite[Sec. 7.5.D]{Gro} and \cite[Sec. 4.1]{Man} for further details.

\begin{defn}\label{Bpc}
Let $G$ be a group acting on a hyperbolic space $X$ and fixing a point $\xi\in \partial X$.
Fix any $x\in X$ and let ${\bf x}=(x_i)$ be any sequence of points of $X$ converging to $\xi$. Then  the function $q_{\bf x}\colon G\to \mathbb R$ defined by
$$
q_{\bf x}(g)=\limsup\limits_{n\to \infty}\left(\d_X (gx, x_n)-\d_X(x, x_n)\right)
$$
is a quasi-character. Its homogenization $\beta_{\bf x}$ is called the \emph{Busemann pseudocharacter}. It is known that this definition is independent of the choice of $\bf x$ (see \cite[Lemma 4.6]{Man}), and thus we can drop the subscript in $\beta_{\bf x}$. An element $g\in G$ is loxodromic with respect to the action of $G$ on $X$ if and only if $\beta(g)\ne 0$.  In particular, $\beta$ is not identically zero whenever $G \curvearrowright X$ is quasi-parabolic. If $\beta$ is a homomorphism, then the action $G\curvearrowright X$ is called \emph{regular}. For examples of non-regular actions, we refer the reader to \cite[Example 3.12]{Amen}
\end{defn}

\subsection{Quasi-parabolic structures on $H \rtimes  \Z$}
\label{section:semiwithZ}
Consider a group $G=H\rtimes_\alpha \Z$ where a generator $t\in \Z$ acts on $H$ by conjugation via $tht^{-1}=\alpha(h)$ for any $h\in H$, and $\alpha\colon \mathbb Z\to \Aut(H)$ is a homomorphsim.  %Let $Q$ be a \emph{symmetric} subset of $H$.  
The following definition from \cite[Section~4]{Amen} forms the basis of the work we do in this paper. Here $Q\cdot Q$ denotes the set of elements $\{gh \in H \mid g,h\in Q\}$.

\begin{defn}\label{genconfine} Let $(H, \cdot)$ be a group, $Q$  a symmetric subset of $H$, and  $\alpha$ be an automorphism of $H$. We say that the action of $\alpha$ is \textit{(strictly) confining $H$ into $Q$} if it satisfies the following conditions$\colon$ 
\begin{itemize}
\item[(a)] $\alpha(Q)$ is (strictly) contained in $Q$. 
\item[(b)] $H = \displaystyle \bigcup_{n \geq 0} \hspace{5pt} \alpha^{-n}(Q)$.
\item[(c)] $\alpha^{n_0}(Q \cdot Q) \subseteq Q$ for some non-negative integer $n_0$. 
\end{itemize}
We also call the set $Q$ \emph{confining under $\alpha$}.
\end{defn}

The definition of a confining subset given in \cite{Amen} does not require symmetry of the subset $Q\subseteq H$. However, according to \cite[Theorem~4.1]{Amen}, to classify regular quasi-parabolic structures on a group it suffices to consider only confining subsets which are symmetric. See also \cite[Proposition~2.6]{AR}.

\begin{prop}[{\cite[Proposition 4.6]{Amen}}]\label{niceprop} Let $H$ be a group and  $\alpha$ an automorphism of $H$ which confines $H$
into some subset $Q \subseteq H$. Consider the group $G=H\rtimes_\alpha \Z$, and let $t$ denote a generator of $\Z$. Define $S=Q\cup \{t^{\pm 1}\}\subseteq G$. Then $\Gamma(G,S)$ is Gromov hyperbolic. If $\alpha(Q) \subsetneq Q$, then the action $G\curvearrowright \Gamma(G,S)$ is regular quasi-parabolic. 
\end{prop}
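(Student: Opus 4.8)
\textbf{Proof plan for Proposition \ref{niceprop}.}
The plan is to construct an explicit quasi-geodesic combing of $\Gamma(G,S)$ and verify hyperbolicity by the standard thin-triangles criterion applied to these paths. Every element of $G=H\rtimes_\alpha\Z$ can be written uniquely as $ht^m$ with $h\in H$ and $m\in\Z$. The key geometric feature of the generating set $S=Q\cup\{t^{\pm1}\}$ is that conjugating by $t$ contracts $Q$: since $\alpha(Q)\subseteq Q$, the relation $t q t^{-1}=\alpha(q)\in Q$ lets us ``push'' any $Q$-letter upward in the $t$-direction at the cost of replacing it with a single $Q$-letter, while condition (c) lets us amalgamate products of $Q$-letters after finitely many upward pushes. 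I would first make this precise by defining, for each $g=ht^m$, a preferred normal-form path that climbs in the $t$-direction to a high enough level, spells out $h$ using few $Q$-letters at that level (using (b) to realize $h$ as $\alpha^{-n}(q)$ for some $q\in Q$ and suitable $n$), and then descends to level $m$. The confining conditions guarantee these paths are uniform quasi-geodesics.

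Second, I would set up the Busemann/height function. The projection $G\to\Z$, $ht^m\mapsto m$, is the natural candidate for (a multiple of) the Busemann pseudocharacter, and I would show that the $t$-axis through the identity limits to a well-defined boundary point $\xi$ fixed by all of $G$. Concretely, left translation by any $h\in H$ moves the $t$-ray $(t^n)_{n\ge0}$ a bounded Hausdorff distance from itself: this is exactly the statement that $h t^n$ and $t^n$ are close in $d_S$ for large $n$, which follows because $h^{-1}=\alpha^{-N}(q)$ for some $q\in Q$ and $N$, so $t^n h t^{-n}=\alpha^{n}(h)$ eventually lies in $Q$ once $n\ge N$ (again using $\alpha(Q)\subseteq Q$). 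Hence $h\xi=\xi$, and $t$ obviously fixes $\xi$, so $G$ fixes $\xi\in\partial X$. Loxodromicity of $t$ is immediate from the quasi-geodesic combing, giving $|\Lambda(G)|\ge2$ and an infinite-order element translating toward $\xi$.

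Third, for the strict case $\alpha(Q)\subsetneq Q$, I would promote the action to genuinely quasi-parabolic by exhibiting loxodromics with distinct attracting/repelling endpoints, forcing $|\Lambda(G)|=\infty$. Pick $q_0\in Q\setminus\alpha(Q)$; then for varying $h\in H$ the elements $ht$ are loxodromic (their powers climb in the $t$-direction at linear rate by the combing), and different choices of $h$ in distinct $Q$-cosets land at distinct points of $\partial X$ because strictness prevents the corresponding horocyclic translates from being boundedly close. Since all these loxodromics share the repelling fixed point $\xi$ but have distinct attracting fixed points, the action cannot be lineal; combined with the fixed point $\xi$ this places it in case 4(a) of Theorem~\ref{ClassHypAct}, i.e.\ quasi-parabolic. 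Regularity follows because the Busemann pseudocharacter agrees (up to scale) with the homomorphism $G\to\Z$, which is genuinely a homomorphism.

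The main obstacle I anticipate is the first step: proving that the normal-form paths are uniform quasi-geodesics in the possibly-infinite-generating-set metric $d_S$, and that geodesic triangles built from them are uniformly thin. The delicate point is bookkeeping the interaction between the three confining conditions when a triangle has vertices at very different $t$-levels, since the ``amalgamation constant'' $n_0$ from (c) accumulates along a path and one must show it does not destroy the quasi-geodesic constants. I would handle this by working at a fixed high level: reduce any product of $Q$-letters to a bounded-length $Q$-word by first raising it via $\alpha$ (condition (a)) until condition (c) applies repeatedly, tracking that the total $t$-displacement needed is controlled linearly by the word length, so that the resulting quasi-isometry constants depend only on $n_0$ and not on the specific elements involved.
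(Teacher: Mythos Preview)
The paper does not prove this proposition directly---it is cited from \cite{Amen}---but it proves the $\Z^n$ generalization (Theorem~\ref{thm:main}) in Section~\ref{section:main}, and that proof specializes to $n=1$. Your outline is in the right spirit (normal forms, a fixed boundary point, the projection $G\to\Z$ as Busemann homomorphism), but it diverges from the paper's argument in two places, and in one of them there is a real gap.

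\textbf{Hyperbolicity.} You propose to build a quasi-geodesic combing and check thin triangles, and you correctly flag as the ``main obstacle'' the bookkeeping of how the amalgamation constant $n_0$ accumulates. The paper dissolves this obstacle up front with a single lemma (Lemma~\ref{control}): any \emph{geodesic} subpath all of whose edges lie in $Q$ has length at most a fixed constant $k_0$, because $Q^{2^m}\subseteq \alpha^{-m n_0}(Q)\subseteq B(2m\ell_0+2)$ forces $2^m\le 2m\ell_0+2$. With this in hand, Lemma~\ref{newpaths} shows every geodesic is uniformly Hausdorff-close to one in the normal form $t^{-s}(q_1\cdots q_k)t^{r}$ with $k\le k_0$, and then the paper checks hyperbolicity via Papasoglu's \emph{slim bigons} criterion rather than thin triangles. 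This is substantially cleaner than what you sketch: once both sides of a bigon are in normal form, the ``middle'' $Q$-words have bounded length and the two $t$-arms are forced to fellow-travel for elementary reasons. Your linear-accumulation worry never materializes because $k_0$ is a constant, not something that grows along the path.

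\textbf{Quasi-parabolicity.} Here your argument has a gap. You want to produce loxodromics $ht$ with pairwise distinct attracting fixed points, asserting that ``different choices of $h$ in distinct $Q$-cosets land at distinct points of $\partial X$ because strictness prevents the corresponding horocyclic translates from being boundedly close.'' This is exactly the hard part, and nothing you have set up proves it: distinguishing boundary points requires a lower bound on distances along the two rays, and strictness of $\alpha(Q)\subsetneq Q$ alone does not obviously give one. The paper (Lemma~\ref{focal}) takes a different and more robust route: it shows directly that $H$ acts \emph{parabolically}. No element of $H$ is loxodromic (every $h$ is conjugate into $Q$, which is exponentially distorted by Lemma~\ref{control}), while the orbits of $H$ are unbounded because any $h\in \alpha^{-i}(Q)\setminus\alpha^{-i+1}(Q)$ has $\|h\|_S$ growing linearly in $i$ (a short computation using the normal form and condition~(c)). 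Since $G$ fixes $\xi=\lim t^{-i}$ and contains the loxodromic $t$, the action is lineal or quasi-parabolic; a lineal action would force $H$ to be elliptic, contradicting parabolicity. Regularity then follows as you say, since the Busemann pseudocharacter factors through the homomorphism $G\to\Z$. Note also that the global fixed point is $\lim t^{-n}$, not $\lim t^{+n}$ as you wrote: with $tqt^{-1}=\alpha(q)\in Q$, it is $qt^{-n}=t^{-n}\alpha^n(q)$ that stays bounded from $t^{-n}$.
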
 

If the action is confining but not strictly confining, that is, if $\alpha(Q) =Q$, then the above theorem still holds with the difference that $\Gamma(G,S)$ is quasi-isometric to a line; see the discussion after the statement of \cite[Theorem~4.1]{Amen}.   The resulting action is thus lineal.

If $G$ is a group with higher rank abelianization, then \cite[Theorem~4.1]{Amen} also describes regular quasi-parabolic actions of $G$ in terms of confining subsets of subgroups of the form $[G,G] \rtimes \langle \alpha \rangle$, where $\alpha$ is an arbitrary element of $G$.   In practice, the wide family of groups $[G,G]\rtimes \langle \alpha \rangle$ under discussion and the inability to compare them makes it difficult to directly apply the characterization of \cite[Theorem~4.1]{Amen} to classify hyperbolic actions of $G$.  %may not yield a generating set of $G$. 
%To avoid working directly with the subgroups in this collection, one would like to be able to extend a  %Indeed, despite the quasi-isometry guaranteed by \cite[Theorem 4.1]{Amen}, the 
%confining subset of $[G,G] \rtimes \langle \alpha \rangle$  to a ``confining subset of $G$," but confining subsets are not defined for an arbitrary group.
%cannot necessarily be extended to a confining subset of $G$ itself, and so this characterization necessitates working directly with these subgroups.
In Section \ref{section:main}, we develop a strong notion of confining subsets for groups of the form $H\rtimes \Z^n$, which allows us to work with the group itself rather than a collection of subgroups, %which restricts the problem further, 
making it significantly easier to classify the hyperbolic actions.   

%%%%%%%%%%%%%%%%%%%%%%%%%%%%%%%%%%%%%%%%%%%%%%%%%%%%%%%%
\section{Proofs of main theorems} \label{section:main}

Let $G= H\semi_\gamma \Z^n$, where $\gamma\colon \Z^n \to \Aut(H)$ is a fixed homomorphism. An element $z\in \Z^n$ acts on $H$ by conjugation via $zhz^{-1}=\gamma(z)(h)$. The following generalizes the definition of a confining subset from \cite{Amen} to all cases when $n \geq 1$.

\begin{defn} \label{def:confining}
Fix a (non-zero) homomorphism $\rho\colon \Z^n \to \R$.  A symmetric subset $Q$ of $H$ is  \emph{confining under $\gamma$ with respect to $\rho$} if the following three conditions hold.
\begin{itemize}
\item[(a)] For all $z\in \Z^n$ with $\rho(z)\geq 0$, $\gamma(z)(Q)\subseteq Q$.
\item[(b)] For each $h\in H$, there exists $z\in \Z^n$ such that $\gamma(z)(h)\in Q$.
\item[(c)] There exists $z_0\in\Z^n$ such that $\gamma(z_0)(Q\cdot Q)\subseteq Q$.
\end{itemize}
If there exists $z\in\Z^n$ such that $\gamma(z)(Q)\subsetneq Q$, then $Q$ is \emph{strictly} confining under $\gamma$ with respect to $\rho$.
\end{defn}

\begin{rem} \label{rem:equivconditions}Condition (b) is equivalent to the following: For any $h \in H$, there exists $R_h \in \R$ such that  $ \gamma(z)(h)  \in Q$ for any $z\in \Z^n$ with $\rho(z) \geq R_h$. Moreover, condition (c) is equivalent to the following: There exists $R_0\in \R$ such that $\gamma(z)(Q\cdot Q)\subseteq Q$ for \emph{any} $z$ with $\rho(z)\geq R_0$. Further, although not stated explicitly,  the condition for $Q$ to be strictly confining must be satisfied by an element $z \in \Z^n$ such that $\rho(z) > 0$. All these facts can be checked using condition (a) and the fact that $\rho$ is a homomorphism. 
\end{rem} 

\begin{rem}
While we choose to work in the context of groups $G=H\rtimes \Z^n$, we could equally well have considered groups $G=H\rtimes A$ where $A$ is an infinite, finitely generated abelian group.  If $A=T\times \Z^n$ for some $n$ where $T$ is torsion, then any group $H\rtimes A$ can be written as $H'\rtimes \Z^n$ where $H'=H\rtimes T$.  Thus we do not lose any generality in assuming $G=H\rtimes \Z^n$ .
\end{rem}

By a Schwarz-Milnor argument, cobounded quasi-parabolic actions of a group $G$ are described by studying the ``coarse stabilizers" of points in the space. When $G=H \rtimes \Z^n$, confining subsets naturally arise by intersecting such coarse stabilizers with $H$.  Given such an action, the homomorphism $\rho$ in Definition~\ref{def:confining} will be the associated Busemann pseudocharacter, which measures the translation of elements towards a fixed point on the boundary.   When $n=1$, consider a quasi-parabolic action on a hyperbolic space, and let $Q\subset H$ be the intersection of the coarse stabilizer of  a point $p$ with $H$.  The intuition behind why $Q$ is confining is as follows (a precise description in the case of a solvable Baumslag-Solitar group acting on $\mathbb H^2$ is in the discussion after \cite[Definition~2.3]{AR}).  If $g\in H$ and $t$ is the generator of $\Z$, we consider the action of a conjugate $\gamma(t)(g)=t^{-k}gt^k$.  First, $t^k$ translates $p$ very far towards the single fixed point on the boundary, then $g$ moves this point a small distance along a horosphere, and then $t^{-k}$ moves this points back to lie on the original horosphere.  The net result is that $t^{-k}gt^k$ moves $p$ a much smaller distance than $g$ itself does, and for large enough $k$, the element $\gamma(t^{-k})(g)=t^{-k}gt^{k}\in Q$.  When $n>1$, the role of $\rho$ becomes apparent.  Any element of $\Z^n$ with positive image under $\rho$ will translate elements towards the single fixed point in the boundary.  Thus the same discussion above holds for \textit{any}  $z\in\Z^n$ with $\rho(z)>0$.

\subsection{Actions from confining subsets}
\label{section:actionconf}
The goal of this section is to prove Theorem \ref{thm:main}.

Fix a generating set $\{t_1,\dots, t_n\}$ of $\Z^n$.  Given a non-zero homomorphism $\rho\colon \Z^n\to \R$, we construct a (possibly infinite) generating set of $\Z^n$ as follows.  Fix a constant $C_\rho>0$  such that $\rho(t_i) \in [-C_\rho, C_\rho]$ for all $i \in \{1,2,\dots,n\}$ and there exists $y\in \Z^n$ such that $\rho(y)=C_\rho$, and let 
\begin{equation}\label{eqn:Zrho}
Z _\rho= \{z \in \Z^n \mid |\rho(z)| \leq C_\rho \}.
\end{equation}

Suppose that $Q\subseteq H$ is confining under $\gamma$ with respect to $\rho$. It is straightforward to check that $Q\cup Z_\rho$ is symmetric, and $Q\cup Z_\rho$ generates $G$ by Definition \ref{def:confining}(b).   We denote the word norm on $G$ with respect to $Q\cup Z_\rho$ by $\|\cdot\|_{Q\cup Z_\rho}$.

Since $\rho$ is a non-zero homomorphism, $Z_\rho$ is a proper subset of $\Z^n$ which generates $\Z^n$ by definition.  By \cite[Lemma 4.15]{ABO}, the Cayley graph $ \Ga(\Z^n, Z_\rho)$ is a quasi-line (that is, quasi-isometric to a line) and the action of $\Z^n $ on $ \Ga(\Z^n, Z_\rho)$ is lineal.  For the rest of the section, we fix a hyperbolicity constant $\delta'$ for $\Ga(\Z^n,Z_\rho)$. 

Our first goal is to prove that $\Gamma(G,Q\cup Z_\rho)$ is hyperbolic, which we will do by understanding what happens to a path when we put its label into a normal form.  Our strategy follows the structure of the proof of \cite[Proposition~4.6]{Amen}, though our more general situation provides additional complications.

We begin by bounding the length of geodesic edge paths in $\Ga(G,Q\cup Z_\rho)$ whose labels are in $Q$. Recall that in a Cayley graph of a group  with respect to a (possibly infinite) generating set, each edge comes with a label that is an element of the generating set.  Thus any path $\gamma$ in the Cayley graph has a label $\Lab(\gamma)$ which is the word formed by reading  off the labels of the edges in the path, in the order they are traversed.     

\begin{lem}[{cf. \cite[Lemma 4.7]{Amen}}] \label{control}There exists a $k_0 \in \mathbb{N}$ such that any geodesic edge path in $\Ga(G, Q\cup Z_\rho)$ each of whose edge labels lies in $Q$ has length at most $k_0$. \end{lem}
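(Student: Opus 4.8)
The plan is to mimic the proof of \cite[Lemma~4.7]{Amen}, using condition (c) of Definition~\ref{def:confining} to show that a product of many elements of $Q$ can be rewritten, after conjugating by a suitable element of $\Z^n$, as a much shorter word in $Q\cup Z_\rho$. Concretely, suppose $\gamma$ is a geodesic edge path in $\Ga(G,Q\cup Z_\rho)$ all of whose labels lie in $Q$, and write $\Lab(\gamma) = q_1 q_2 \cdots q_m$ with each $q_i\in Q$. By Remark~\ref{rem:equivconditions}, let $z_0\in\Z^n$ be an element with $\rho(z_0) = R_0 > 0$ such that $\gamma(z)(Q\cdot Q)\subseteq Q$ for every $z$ with $\rho(z)\geq R_0$; by iterating condition (c) and using condition (a), one gets that for each $j$ there is an element $w_j\in\Z^n$ (a bounded multiple of $z_0$, with $\rho(w_j)$ growing like $\log_2$ of the number of factors) such that $\gamma(w_j)(q_1\cdots q_{2^j}) \in Q$ — this is the standard ``balanced binary tree'' collapsing argument. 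The key point is that $\rho(w_j)$ grows only logarithmically in the number of factors being combined, so $w_j\in Z_\rho$ once $\rho(w_j)\le C_\rho$, i.e. for all $j$ up to roughly $C_\rho/R_0$.

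First I would set things up carefully: fix $z_0$ as above, and prove by induction on $j$ that any product of $2^j$ elements of $Q$ lies in $\gamma(z)(Q)$... wait — more precisely, that there is $z\in\Z^n$ with $\rho(z)\le jR_0$ (say $z = -jz_0$ or a similar bounded expression) for which $\gamma(z)^{-1}$ applied to the product lands in $Q$; equivalently, $z^{-1}(q_1\cdots q_{2^j})z \in Q$ in the group $G$. Then, taking $j$ to be the largest integer with $jR_0\le C_\rho$, any product of $N_0 := 2^j$ elements of $Q$ equals $z g z^{-1}$ for some $g\in Q$ and some $z\in\Z^n$ with $\rho(z)\le C_\rho$; that is, $z,z^{-1}\in Z_\rho$, so this product has $\|\cdot\|_{Q\cup Z_\rho}$-norm at most $3$. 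Consequently, if $m > N_0$ — actually, chopping $q_1\cdots q_m$ into $\lceil m/N_0\rceil$ blocks — the word $q_1\cdots q_m$ represents an element of norm at most $3\lceil m/N_0\rceil$, which is strictly less than $m$ once $m$ is large enough. Since $\gamma$ is geodesic of length $m$, this forces $m \le k_0$ for an explicit constant $k_0$ depending only on $N_0$ (equivalently on $C_\rho$, $R_0$).

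The main obstacle I anticipate is the bookkeeping in the inductive collapsing step: making sure the conjugating elements $z\in\Z^n$ stay in $Z_\rho$ throughout, and handling the fact that when we split $q_1\cdots q_m$ into blocks of size $N_0$ the ``leftover'' factors and the re-accumulation of the block-products must also be controlled — a priori the partial products $z g_1 z^{-1}, z g_2 z^{-1},\dots$ need to be recombined, and one must check this doesn't reintroduce a long word. This is handled by the same argument: $\gamma(z)(Q)\subseteq Q$ for $\rho(z)\ge 0$ by condition (a), so after conjugating each block into $Q$ we again have a product of $\lceil m/N_0\rceil$ elements of $Q$ and can recurse, or more simply one observes that the estimate ``product of $m$ elements of $Q$ has norm $\le 3\lceil m/N_0\rceil$'' combined with geodesicity ($m \le 3\lceil m/N_0\rceil$) directly bounds $m$. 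A secondary subtlety is that Definition~\ref{def:confining}(c) gives a single $z_0$ but Remark~\ref{rem:equivconditions} upgrades this to ``all $z$ with $\rho(z)\ge R_0$,'' which is what lets us choose conjugators with controlled $\rho$-value; I would invoke that remark explicitly rather than re-deriving it.
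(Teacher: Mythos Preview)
Your overall strategy---iterating Definition~\ref{def:confining}(c) to show that a product of $2^j$ elements of $Q$ can be conjugated into $Q$ by an element of $\Z^n$ whose ``size'' grows only linearly in $j$, and then comparing this linear growth against the exponential $2^j$---is exactly the paper's approach. However, your execution has a gap: you insist that the conjugating element $z$ lie in $Z_\rho$ as a \emph{single generator}, which forces $|\rho(z)|\le C_\rho$, and you then take $N_0=2^j$ for the largest $j$ with $jR_0\le C_\rho$. Nothing guarantees $R_0\le C_\rho$: the threshold $R_0$ is determined by the confining subset $Q$, while $C_\rho$ depends only on $\rho$ and the chosen basis of $\Z^n$. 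When $R_0>C_\rho$ the largest admissible $j$ is $0$, so $N_0=1$, and neither the direct estimate $m\le 3\lceil m/N_0\rceil$ nor the proposed recursion yields any bound. (Even when $R_0\le C_\rho$, note that the direct estimate requires $N_0\ge 4$ to bound $m$; only the recursive version works for $N_0=2,3$.)

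The fix is to drop the requirement that the conjugator be a single element of $Z_\rho$ and simply measure its word length in $Q\cup Z_\rho$. Take the element $z_0$ from Definition~\ref{def:confining}(c) and set $\ell_0=\|z_0\|_{Q\cup Z_\rho}$. Iterating (c) gives $Q^{2^m}\subseteq \gamma(z_0^{-m})(Q)=z_0^{-m}Qz_0^{m}$, so any product of $2^m$ elements of $Q$ has $(Q\cup Z_\rho)$-norm at most $2m\ell_0+1$. If a geodesic with all labels in $Q$ has length $2^m$, this forces $2^m\le 2m\ell_0+2$, which bounds $m$. This is precisely the paper's argument, and it avoids any comparison between $R_0$ and $C_\rho$.
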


\begin{proof} Let $B(r)$ denote the ball of radius $r$ centered at the identity in $\Ga(G, Q\cup Z_\rho)$.  By $B(r)\cap H$, we mean the intersection of the zero skeleton of the ball $B(r)$ with the subgroup $H$.  Thus $B(1) \cap H = Q$.  By Definition \ref{def:confining}(c), we have $Q^2 = Q \cdot Q \subseteq \gamma(z_0)^{-1}(Q)=\gamma(z_0^{-1})(Q)$ for some $z_0\in \Z^n$, and, more generally, 
$$Q^{2^m} \subseteq \gamma(z^{-m}_0)(Q).$$ 

For any $q \in Q$, 
\[
\|\gamma(z_0^{-m})(q)\|_{Q\cup Z_\rho}=\|z_0^{-m}qz_0^m\|_{Q\cup Z_\rho}\leq 2m\|z_0\|_{Q\cup Z_\rho}+1.
\]
Letting $\ell_0=\|z_0\|_{Q\cup Z_\rho}$, we have 
\[
\left( B(1) \cap H \right)^{2^m}=Q^{2^m}\subseteq \gamma(z_0^{-m})(Q)\subseteq B(2m\ell_0 + 2) \cap H.
\]

In particular, it follows that any geodesic edge path such that the label of each edge is in $Q$ that has length at most $ 2^m$ must satisfy $$2^m \leq 2m\ell_0 +2.$$ But then $m$ is bounded by $\kappa= 4 \log_2(\ell_0 +2)$, and so the length is bounded  by $k_0 = 2\kappa \ell_0 +2$.
\end{proof}

\begin{rem}\label{rem:Hnoloxos}
Lemma \ref{control} shows that $H$ cannot contain any loxodromic isometries with respect to the action of $G$ on $\Ga(G,Q\cup Z_\rho)$.   To see this, notice that given any $h\in H$, there is some $z\in \Z^n$ such that $\gamma(z)(h)=zhz^{-1}\in Q$.  In other words, every $h\in H$ is conjugate to an element of $Q$.  As $Q$ is exponentially distorted, it contains no loxodromic elements with respect to this action. Thus neither does $H$.  
\end{rem}

The next lemma provides a preferred form for paths in $\Ga(G,Q\cup Z_\rho)$ and shows that any geodesic is at uniformly bounded Hausdorff distance from a path in this preferred form. This is related to \cite[Lemma 4.8]{Amen}.

\begin{lem} \label{newpaths}
Let $\alpha$ be an edge path of length $m$ in $\Ga(G, Q\cup Z_\rho)$ with exactly $k$ edges whose labels are in $Q$ and such that all subpaths with labels in $\Z^n$ are geodesics.  Then there exists a path $\tau$ with the same endpoints as $\alpha$ of the form 
\[
\tau=\tau_1\tau_2\tau_3
\]
with $\Lab(\tau_1)=a_1a_2\ldots a_s$, $\Lab(\tau_2)=g_1g_2\ldots g_k$, and $\Lab(\tau_3)=b_1b_2\ldots b_r$ that satisfies the following properties:
\begin{itemize}
\item $\tau\subset \mc N_{2(k+1)\delta' +2k}(\alpha)$, where $\delta'$ is the hyperbolicity constant of $\Ga(\Z^n,Z_\rho)$;
\item $\tau_1$ and $\tau_3$ are geodesics;
\item $g_i\in Q$ for all $1\leq i\leq k$;
\item $a_j\in Z_\rho$ satisfies $\rho(a_j)<0$ for each $1\leq j\leq s$; 
\item $b_l\in Z_\rho$ satisfies  $\rho(b_l) \geq 0$ for each $1\leq l\leq r$; and 
\item $s + k + r \leq m$
\end{itemize}

Moreover, if $\alpha$ is a geodesic edge path in $\Ga(G, Q\cup Z_\rho)$, then $\tau$ is also a geodesic edge path, and $\alpha$ and $\tau$ are at Hausdorff distance at most $2(k_0 +1)\delta' + 2k_0$, where $k_0$ is the constant from Lemma~\ref{control}. 
\end{lem}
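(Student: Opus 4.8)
The plan is to rewrite the word $\Lab(\alpha)$ into the required three‑block form by a sequence of commutation moves, and then to bound the Hausdorff distance between $\tau$ and $\alpha$ by a hyperbolicity argument carried out inside $\Ga(\Z^n,Z_\rho)$. I would first record three algebraic facts. (i) In $G$ one has $zq=\gamma(z)(q)\,z$ and $qz=z\,\gamma(z^{-1})(q)$ for $z\in\Z^n$, $q\in H$; together with Definition~\ref{def:confining}(a) this means that sliding a $Z_\rho$‑letter $z$ with $\rho(z)\ge0$ to the right past a $Q$‑letter, or one with $\rho(z)<0$ to the left past a $Q$‑letter, merely replaces that $Q$‑letter by another element of $Q$ (in the second case because $\rho(z^{-1})=-\rho(z)>0$). (ii) The projection $\pi\colon G\to\Z^n$ kills $H\supseteq Q$, so any word over $Q\cup Z_\rho$ representing $z\in\Z^n$ must contain at least $\|z\|_{Z_\rho}$ letters from $Z_\rho$; hence $\|z\|_{Q\cup Z_\rho}=\|z\|_{Z_\rho}$ on $\Z^n$, an edge path with labels in $Z_\rho$ is a geodesic of $\Ga(G,Q\cup Z_\rho)$ exactly when its label is geodesic in $\Ga(\Z^n,Z_\rho)$, and two such geodesic edge paths with the same endpoints lie at Hausdorff distance $\le\delta'$. (iii) If $\rho(z)<0$ then $z$ has a geodesic $Z_\rho$‑word all of whose letters have negative $\rho$‑value: order any geodesic $Z_\rho$‑word for $z$ with $\rho$‑values weakly decreasing, and note that if its first letter had $\rho\ge0$ then -- since the $\rho$‑values sum to $\rho(z)<0$ -- the last letter $a_s$ has $\rho(a_s)<0$, whence $\rho(a_1+a_s)\in[-C_\rho,C_\rho)$, so $a_1+a_s\in Z_\rho$, contradicting minimality of the word's length. (The symmetric statement holds for $\rho(z)\ge0$ and non-negative letters.)

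To build $\tau$, I would bubble‑sort $\Lab(\alpha)$ for the order ``negative‑$\rho$ $Z_\rho$‑letters $<$ $Q$‑letters $<$ non‑negative‑$\rho$ $Z_\rho$‑letters,'' effecting each adjacent transposition by (i) (or by commutativity of $\Z^n$ when both letters lie in $Z_\rho$). The number of inversions strictly decreases, so the process terminates in a word $u\,g_1\cdots g_k\,v$ representing the same element of $G$ as $\Lab(\alpha)$, with $g_i\in Q$, $u$ a $Z_\rho$‑word on the negative‑$\rho$ $Z_\rho$‑letters of $\Lab(\alpha)$, and $v$ a $Z_\rho$‑word on the non‑negative ones; its length is still $m$. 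Putting $z_-=\pi(u)$ and $z_+=\pi(v)$ we have $\rho(z_-)\le0\le\rho(z_+)$, with $\rho(z_-)<0$ unless $u$ is empty, so by (iii) I can replace $u$ by a geodesic $Z_\rho$‑word $a_1\cdots a_s$ for $z_-$ with every $\rho(a_j)<0$, and $v$ by a geodesic $Z_\rho$‑word $b_1\cdots b_r$ for $z_+$ with every $\rho(b_l)\ge0$; then $s\le|u|$, $r\le|v|$, so $s+k+r\le m$. Let $\tau_1,\tau_2,\tau_3$ be the edge paths issuing from the initial vertex of $\alpha$ with labels $a_1\cdots a_s$, $g_1\cdots g_k$, $b_1\cdots b_r$. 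By construction $\tau=\tau_1\tau_2\tau_3$ has the same endpoints as $\alpha$; by (ii) the segments $\tau_1,\tau_3$ are geodesics; and the remaining bulleted conditions hold, except for the neighbourhood containment.

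The neighbourhood containment is the part I expect to be the main obstacle, and it is where hyperbolicity of $\Ga(\Z^n,Z_\rho)$ really enters. The easy ingredient is that one adjacent transposition displaces exactly one interior vertex of the current path and moves it a distance at most $2$ -- e.g.\ the vertex between a $Z_\rho$‑letter $z$ and a $Q$‑letter $q$ moves from $xz$ to $x\gamma(z)(q)$, and $z^{-1}x^{-1}x\gamma(z)(q)=qz^{-1}$ has length $\le2$. The difficulty is that the sort uses far more than $O(k)$ transpositions, so one cannot merely add these jumps up; the plan is to instead first reorder the letters inside each maximal $Z_\rho$‑subpath $\alpha_i$ of $\alpha$ so that its negative‑$\rho$ letters precede its non‑negative ones (this moves $\alpha$ by Hausdorff distance $\le\delta'$, since $\alpha_i$ and its reordering are $Z_\rho$‑geodesics with equal endpoints), and then arrange the remaining moves so that at each stage the current path is a concatenation of at most $k+1$ $Z_\rho$‑geodesics interspersed with $k$ $Q$‑edges, each of its segments being pinned to within $\delta'$ by its endpoints (fact (ii)), which themselves only undergo the $\le2$‑bounded jumps above; tracking the at most $k+1$ segments and the $k$ $Q$‑edges then yields $\tau\subseteq\mc N_{2(k+1)\delta'+2k}(\alpha)$. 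Orchestrating the moves so that the per‑segment $\delta'$‑errors do not accumulate is the delicate point. Finally, if $\alpha$ is a geodesic then $\tau$ is too, since $|\tau|=s+k+r\le m=|\alpha|$ equals the distance between the common endpoints; in that case $\tau_2$ is a geodesic edge path with all labels in $Q$, so $k\le k_0$ by Lemma~\ref{control}, and running the same tracking in the reverse direction upgrades the containment to the two‑sided Hausdorff bound $2(k_0+1)\delta'+2k_0$.
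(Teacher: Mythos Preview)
Your approach is essentially the paper's: the commutation identities in (i), the isometric embedding of $\Ga(\Z^n,Z_\rho)$ in (ii), the sign-homogeneous geodesic words in (iii), and the sort-then-straighten construction of $\tau$ are all exactly what the paper uses, and your argument that $\tau$ is geodesic when $\alpha$ is (hence $k\le k_0$) is the same as the paper's.

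The gap is exactly where you say it is. You write ``Orchestrating the moves so that the per-segment $\delta'$-errors do not accumulate is the delicate point,'' and then do not do it. This is the entire technical content of the lemma; everything else is algebra. The paper does \emph{not} bubble-sort globally and then try to control the result---it processes the path left to right in a specific order that prevents accumulation. After splitting $\omega_1$ as $v_1^-v_1^+$ (cost $\delta'$), it pushes $v_1^+$ rightward across $\mu_1$ (cost $\ell(\mu_1)$, since each single-letter swap has Hausdorff cost $1$, not $2$---the displaced vertex is adjacent to a common vertex), then replaces the concatenation $v_1^+\omega_2$ by a single geodesic $v_2^-v_2^+$ (cost $\delta'$), and iterates. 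After $l\le k$ such rounds the cost is $(k+1)\delta'+k$. A symmetric right-to-left pass moves the accumulated $v_i^-$ segments leftward past the $Q$-blocks (cost another $k$), and a final straightening of the concatenation $v_1^-\cdots v_{l+1}^-$ (which is $l+1\le k+1$ geodesics) into $\tau_1$ costs $(k+1)\delta'$. The point is that at every stage only one new $\delta'$-replacement and one $Q$-block crossing occurs, so costs add, not multiply.

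For the ``moreover,'' your reverse-tracking idea needs one observation you omit: the only steps that are not automatically Hausdorff-bounded in both directions are the ones replacing a concatenation of $Z_\rho$-geodesics by a single geodesic, and these can shorten. But if $\alpha$ is geodesic then $s+k+r=m$, so $s=|u|$ and $r=|v|$, i.e.\ no shortening ever occurred; hence each such replacement was between two geodesics with the same endpoints and is $\delta'$-bounded in both directions. With that said, the reverse tracking goes through.
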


\begin{proof}  For every $q \in Q$ and $z \in Z_\rho$, either $\gamma(z)(q) \in Q$ (if $\rho(z) \geq 0$) or $\gamma(z^{-1})(q) \in Q$ (if $\rho(z) <0$). Thus when $\rho(z) \geq 0$, we have $zq = \gamma(z)(q)z=q'z$, where $q' =\gamma(z)(q)\in Q$. Similarly, when $\rho(z) < 0$, we may replace $qz$ with $zq''$ where $q'' =\gamma(z)^{-1}(q)=\gamma(z^{-1})(q)\in Q$. If we perform one such operation to the label of a path, the result is a  new path  at distance at most 1 from the original path in $\Ga(G, Q\cup Z_\rho)$ (see Figure \ref{fig1}).   Now suppose we have a word $z_1\ldots z_m\in \Z^n$ with $z_i\in Z_\rho$ and $\rho(z_i)\geq 0$ for each $1\leq i\leq m$.  Let $q\in Q$ and consider a path $\zeta$ with label $z_1\ldots z_m q$.    By performing this operation $m$ times, each time moving one letter in $Z_\rho$ past $q$, we obtain a sequence of paths $z_1\ldots z_{m-i}q_iz_{m-i+1}\ldots z_m$. To see that they are mutually at Hausdorff distance one see Figure \ref{fig:Movingzs}.

\begin{figure}
	\centering
	\def\svgscale{0.6}
	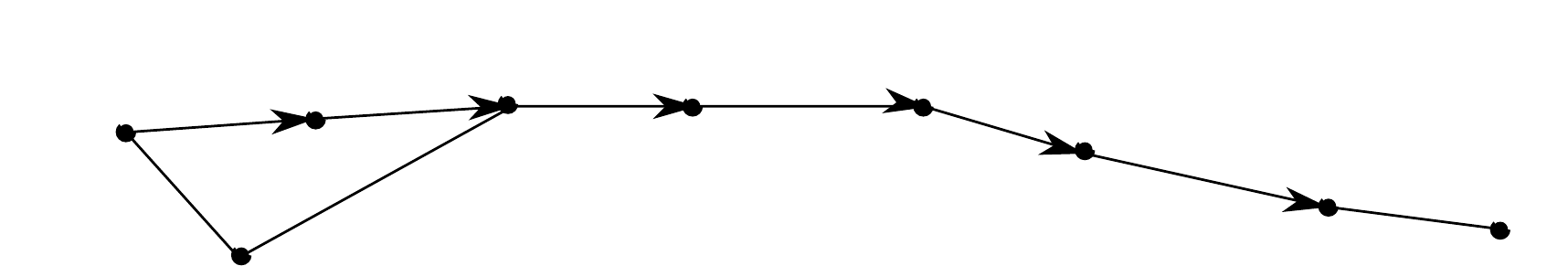
           \caption{Performing a single operation $z_1q_1=q_1'z_1$ results in a path at distance at most one from the original path in $\Ga(G,Q\cup Z_\rho)$.}
          \label{fig1}
\end{figure}

\begin{figure}
	\centering
	\def\svgscale{0.7}
	\small{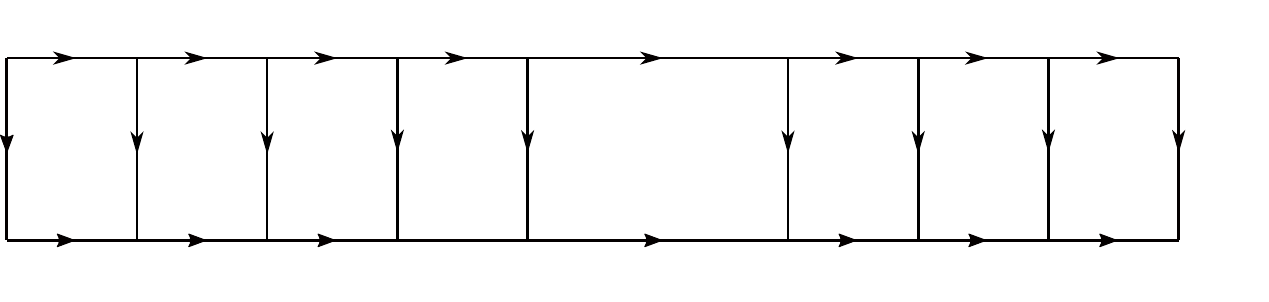}
           \caption{Performing $m$ operations $z_1z_2\ldots z_mq = q_mz_1\ldots z_m$ results in a path at distance at most one from the original path in $\Ga(G,Q\cup Z_\rho)$.}
          \label{fig:Movingzs}
\end{figure}

Let $\Lab(\alpha) = e_1e_2\ldots e_m$ be the label of the given path $\alpha$ in $\Ga(G, Q\cup Z_\rho)$.  The $k$ edges whose labels are in  $Q$ partition this path into at most $k+1$ segments containing edges whose labels are in  $Z_\rho$, each of which is a geodesic segment by assumption, and at most $k$ segments whose edges have labels in $Q$. We may thus think of the path $\alpha$ as having the form 
$$
\alpha=\omega_1\mu_1\omega_2\mu_2\ldots \mu_l\omega_{l+1}
$$
where each $\omega_i$ is a geodesic whose edges all have labels in $Z_\rho$, each $\mu_j$ is a path whose edges all have labels in $Q$, and $l \leq k$.  Let $\Lab(\omega_i)=w_i$ and  $\Lab(\mu_i)=u_i$.

Since $\Z^n$ is abelian, we may reorder the letters in the word $w_1$ so that $w_1=v_1^-v_1^+$, where each letter in $v_1^-$ has negative image under $\rho$ and each letter in $v_1^+$ has non-negative image under $\rho$.  
We replace the geodesic $\omega_1$ with a path (which is also necessarily geodesic) whose label is $v_1^-v_1^+$.  Since $\Ga(\Z^n,Z_\rho)$ is $\delta'$--hyperbolic, this results in a path at Hausdorff distance at most $\delta'$ from $\alpha$ with label 
\[
v^-_1 v^+_1u_1w_2u_2\ldots u_l w_{l+1}.
\]

Let  $u\in Q$ be the first letter in $u_1$.  As described in the first paragraph, we have $v_1^+u=u'v_1^+$,  where $u'=\gamma(v_1^+)(u)$.  We replace the collection of edges in $\alpha_1$ labeled by $v_1^+u$ with a new path labeled by $u'v_1^+$.  If $q$ is the second letter of $u_1$, then we again replace the subpath of this new path labeled by $v_1^+q$ with a path labeled by $q'v_1^+$, where $q'=\gamma(v_1^+)q$.  Continuing in this manner, we may move the subpath labeled by $v^+_{1}$ past all the edges contained in $\mu_1$.  Each step in this process produces a path at Hausdorff distance 1 from the previous path, and thus in the end we have   produced a path $\alpha'$ at Hausdorff distance at most $\delta' + \ell(\mu_1)$   from $\alpha$ with label 
  $$ 
  v^-_1u'_1 v^+_1w_2u_2\ldots u_l w_{l+1},
  $$ 
  for some word $u_1'$, each letter of which is in $Q$.

The subpath of $\alpha'$ labeled by $v^+_1w_2$ is a concatenation of two geodesic paths.  %: the first labeled by $v_1^+$ and the second, $\omega_2$, by $w_2$.   
Let $\nu_2$ be a geodesic in $\Ga(\Z^n,Z_\rho)$  with the same endpoints, %from the initial point of the subpath labeled by $v_1^+$ to the terminal point of $\omega_2$, 
so that $\Lab(\nu_2)\in \Z^n$.  As before, since $\Z^n$ is abelian, we may rearrange the edges in $\nu_2$ so that $\Lab(\nu_2)=v_2^-v_2^+$, where each letter of $v_2^-$ has negative image under $\rho$ and each letter of $v_2^+$ has non-negative image under $\rho$.  Since $\nu_2$ was a geodesic, so is the path with label $v_2^-v_2^+$.  
The concatenation of the subpath labeled by $v_1^+w_2$, and the subpath labeled by $v_2^-v_2^+$ forms a geodesic triangle in $\Ga(\Z^n, Z_\rho)$.  Replacing the subpath of $\alpha'$ labeled by $v^+_1w_2$ with the geodesic labeled by $v_2^-v_2^+$  yields a new path contained in the $\delta'$--neighborhood of $\alpha'$.  
This new path has label 
\[
v^-_1u'_1v^-_2v^+_2u_2\ldots u_{l}w_{l+1}
\]
and is contained in the $(2\delta'+\ell(\mu_1))$--neighborhood of $\alpha$.

As above, the properties of $Q$ allow us to move all the edges of the subpath labeled by $v^+_2$ past all  the edges from $u_{2}$ to obtain a path with label
 $$
 v^-_1u'_1v^-_2u'_2v^+_2 w_3\ldots u_{l}w_{l+1}
 $$ 
 which is contained in the $(2\delta' + \ell(\mu_1)+\ell(\mu_2))$--neighborhood of $\alpha$. 

Continuing this process, we eventually obtain a path $\alpha''$ with label 
$$
v^-_1 u'_1v^-_2u'_2\ldots u'_l v^-_{l+1}v^+_{l+1}
$$
  where every letter in $v^+_{l+1}$ has non-negative image under $\rho$, every letter of $v_i^-$ has negative image under $\rho$ for $1\leq i\leq l+1$, and every letter in each $u'_j$ is in $Q$.  This path $\alpha''$  is contained in the  $((k+1)\delta' + k)$--neighborhood of $\alpha$ since $l\leq k$ and the total number of edges from $Q$ is $k$.   Let $\mu_i'$ be the subpaths of $\alpha''$ with labels $u_i'$.

We now move each edge of the subpath labeled by $v^-_{l+1}$ past all the edges in the subpath labeled by $u'_l$ by using the properties of $Q$ in an analogous way as above. This yields a path with label  
$$
v^-_1 u'_1v^-_2u'_2\ldots v^-_lv^-_{l+1} u''_lv^+_{l+1}
$$
 which is contained in the $((k+1)\delta' +k + \ell(\mu_l'))$--neighborhood of $\alpha$. Again using the properties of $Q$, we move each edge of the subpath labeled by $v^-_lv^-_{l+1}$ past each  edge of $\mu'_{l-1}$.  By continuing this process, we eventually obtain a path $\tau'$ with label 
  $$
  v^-_1 v^-_2 \ldots v^-_{l+1}(u''_1u''_2 \ldots u''_l)v^+_{l+1}
  $$
  that is contained in the neighborhood of $\alpha$ of radius \[(k+1)\delta'+k+\ell(\mu_1')+\cdots+\ell(\mu_l')=(k+1)\delta'+2k.\]
 For the final step, we form the path $\tau$ by  replacing the subpath $\tau''$ of $\tau'$ labeled by $v_1^-v_2^-\ldots v_{l+1}^-$ with a geodesic $\tau_1$ between its endpoints formed in the following way.  
 
Let $v=v_1^-v_2^-\ldots v_{l+1}^-$, and note that $\rho(v)=\displaystyle \sum_{i=1}^{l+1}\rho(v_i^-)<0$.  Let $m=\left \lfloor \dfrac{-\rho(v)}{C_\rho} \right \rfloor$.  Since $\rho$ is a homomorphism and no element of $Z_\rho$ has image under $\rho$ whose absolute value is larger than $C_\rho$, we have $m\leq \|v\|_{Z_\rho}\leq m+1$, and $\|v\|_{Z_\rho}=m$ exactly when $\dfrac{-\rho(v)}{C_\rho}\in \Z$.  If $\|v\|_{Z_{\rho}}=m$ we must have $v=v_1'v_2'\ldots v_m'$ where $\rho(v_i')=-C_\rho$ for all $i$.    In this situation we let $\tau_1$ be the geodesic between the endpoints of $\tau''$   labeled by $v_1'v_2'\ldots v_m'$.  Now suppose $\|v\|_{Z_\rho}=m+1$, and recall that by definition there is an element $z\in Z_\rho$ with $\rho(z)=-C_\rho$.  Consider the element $z^{-m}v\in \Z^n$.  Then $-C_\rho< \rho(z^{-m}v)<0$, and so $z^{-m}v\in Z_\rho$.  In this case, we let $\tau_1$ be the geodesic between the endpoints of $\tau''$ labeled by $z^m(z^{-m}v)\in \Z^n$.  In either case, the geodesic $\tau_1$ has the property that each edge is labeled by an element whose image under $\rho$ is negative.  Moreover,  since $\tau''$ is the concatenation of $l+1\leq k+1$ geodesics, $\tau_1$ is contained in its $(k+1)\delta'$--neighborhood.  

We claim $\tau$ satisfies the first statement of the lemma.  Let $\tau_2$ be the subpath of $\tau$ with label $u_1''u_2'' \ldots u_l''$ and $\tau_3$ the subpath with label $v^+_{l+1}$.  Then the following hold by construction: $\tau_1$ and $\tau_3$ are geodesics;  each letter in $\Lab(\tau_2)$ is in $Q$; every letter in $\Lab(\tau_1)$ has negative image under $\rho$;  and each letter of $\Lab(\tau_3)$ has non-negative image under $\rho$.   Further, $\tau$ is contained in the $(2(k+1)\delta' +2k)$--neighborhood of $\alpha$ and has the same endpoints as $\alpha$.  Finally, notice that this process does not increase the length of the path we started with, and so the final bullet point of the  statement of the lemma  holds.  This completes the proof of the first statement of the lemma. 

The final bullet point immediately implies that if $\alpha$ is a geodesic, then so is $\tau$. To  prove the second part of the ``moreover" statement, notice that the only times in this procedure when we do not get a bound on the Hausdorff distance between paths at successive stages is when we have a subpath whose label is of the form $vw$, where $v,w\in \Z^n$, which we replace with a geodesic $\nu$ between its endpoints. (We think of the final step in the above procedure as iterating this $l$ times.) In general, the subpath labeled by $vw$ is only a concatenation of geodesics and may not be a geodesic itself.  In particular, there may be backtracking at the concatenation point, and so we do not always get a bound on the Hausdorff distance at this step.  However, if there is backtracking, then $\ell(\nu)$ is strictly less than the length of the subpath labeled by $vw$.  In particular, we must have $\ell(\tau)<\ell(\alpha)$.  Since $\alpha$ and $\tau$ have the same endpoints, this contradicts our assumption that $\alpha$ is a geodesic.  Therefore, whenever $\alpha$ is a geodesic, the subpath with label $vw$ must be a geodesic, and thus replacing this subpath with $\nu$ results in a  bound on the Hausdorff distance between the paths. (In fact, in this situation we may skip this step altogether.) Therefore, in this case we conclude that the Hausdorff distance between $\alpha$ and $\tau$ is at most $2(k+1)\delta' +2k$.  Finally, since $\tau$ is a geodesic, the subpath $\tau_2$ of $\tau$  is also a geodesic and has length $\displaystyle \sum_{i=1}^l\ell(\mu_i'')= \sum_{i=1}^l\ell(\mu_i)=k$.  This label is an element of $H$, and so by Lemma \ref{control}, we conclude that $k\leq k_0$.  Therefore, the Hausdorff distance between $\alpha$ and $\tau$ is uniformly bounded by $2(k_0+1)\delta' +2k_0$.  This concludes the proof of the lemma.
\end{proof}

\begin{lem}\label{moveQ} Suppose $\mu=\mu_1\mu_2$  is a path in $\Ga(G,Q\cup Z_\rho)$ with  $\Lab(\mu_1)= x_1x_2 \ldots x_m$ and  $Lab(\mu_2) = q_1q_2 \ldots q_k$, where $ x_j \in Z_\rho$ and $q_i \in Q$. Further assume that $\rho(x_j) \geq 0 $ for all $1 \leq j \leq m$.  Let $\nu$ be the path with the same endpoints as $\mu$ provided by Lemma \ref{newpaths}, so that $\nu=\nu_1\nu_2$ satisfies $Lab(\nu_1) = q_1' q_2' \ldots q_k'$ and $Lab(\nu_2) = x_1x_2 \ldots x_m=\Lab(\mu_1)$, where $q_i' \in Q$.

If $v$ is any vertex on $\mu$ (respectively, $\nu$), then there exists a vertex $v'$ on $\nu_2$ (respectively, $\mu_1$) such that $d(v,v')\leq k$.  In particular, if $\mu$ is a geodesic, then we have $d(v,v')\leq k_0$, where $k_0$ is the constant from Lemma \ref{control}.
\end{lem}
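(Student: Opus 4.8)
The plan is to follow each vertex of $\mu$ through the rewriting of Lemma~\ref{newpaths} and to use that the ``shifted'' copies of the $q_i$ stay inside $Q$. Since every letter of $\Lab(\mu_1)$ has non-negative $\rho$-image, the rewriting in Lemma~\ref{newpaths} never creates a $Z_\rho$-letter of negative $\rho$-image, so $\nu=\nu_1\nu_2$ with $\Lab(\nu_1)=q_1'q_2'\cdots q_k'$ and $\Lab(\nu_2)=x_1x_2\cdots x_m$, where, reading off the construction, $q_i'=\gamma(x_1\cdots x_m)(q_i)\in Q$; in particular $q_1'\cdots q_k'=\gamma(x_1\cdots x_m)(q_1\cdots q_k)$. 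Write $g$ for the common initial vertex of $\mu$ and $\nu$. The vertices of $\mu$ are $v_i:=gx_1\cdots x_i$ ($0\le i\le m$, lying on $\mu_1$) together with $v_{m,l}:=gx_1\cdots x_m q_1\cdots q_l$ ($0\le l\le k$, lying on $\mu_2$), and the vertices of $\nu$ are $w_l:=gq_1'\cdots q_l'$ ($0\le l\le k$, lying on $\nu_1$) together with $w_{k,i}:=gq_1'\cdots q_k'x_1\cdots x_i$ ($0\le i\le m$, lying on $\nu_2$); note that $v_{m,k}=w_{k,m}$ is the common terminal vertex.

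The computation at the heart of the proof is that, for $0\le i\le m$, since $\Z^n$ is abelian,
\[
v_i^{-1}w_{k,i}=(x_1\cdots x_i)^{-1}\,\gamma(x_1\cdots x_m)(q_1\cdots q_k)\,(x_1\cdots x_i)=\gamma(x_{i+1}\cdots x_m)(q_1\cdots q_k).
\]
As $\rho(x_{i+1}\cdots x_m)=\sum_{j>i}\rho(x_j)\ge 0$, Definition~\ref{def:confining}(a) gives $\gamma(x_{i+1}\cdots x_m)(q_j)\in Q$ for every $j$, so $v_i^{-1}w_{k,i}$ is a product of $k$ elements of $Q$ and $d(v_i,w_{k,i})\le k$. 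Pairing $v_i$ with $w_{k,i}$ thus handles every vertex of $\mu_1$, and reading the same inequality backwards (pairing $w_{k,i}$ with $v_i\in\mu_1$) handles every vertex of $\nu_2$.

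For the remaining vertices I would argue directly: a vertex $v_{m,l}$ of $\mu_2$ satisfies $v_{m,l}^{-1}w_{k,m}=q_{l+1}\cdots q_k$, a product of $k-l\le k$ elements of $Q$, so it is within $k$ of $w_{k,m}\in\nu_2$; and a vertex $w_l$ of $\nu_1$ satisfies $w_l^{-1}g=(q_l')^{-1}\cdots(q_1')^{-1}$, a product of $l\le k$ elements of $Q$ (using that $Q$ is symmetric), so it is within $k$ of $g\in\mu_1$. This gives the first assertion. For the ``in particular'': if $\mu$ is a geodesic then its subpath $\mu_2$ is a geodesic edge path, all of whose edge labels lie in $Q$, of length $k$, so Lemma~\ref{control} forces $k\le k_0$ and every bound above becomes $\le k_0$.

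I do not anticipate a real obstacle. The only points needing care are extracting the exact formula $q_i'=\gamma(x_1\cdots x_m)(q_i)$ from the construction in Lemma~\ref{newpaths}, and observing that one must use that \emph{every} partial product $x_{i+1}\cdots x_m$ has non-negative $\rho$-image---this is precisely where the hypothesis ``$\rho(x_j)\ge 0$ for all $j$'' (rather than just for the total) enters, and it is what keeps the shifted $q_j$ inside $Q$ and yields the displacement bound $k$ instead of the naive $mk$.
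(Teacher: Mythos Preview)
Your proof is correct and essentially follows the same approach as the paper: both exploit the ``rectangle'' structure created by sliding the $Z_\rho$-letters past the $Q$-letters. The paper presents this geometrically, building the grid of intermediate paths explicitly (as in Figure~\ref{squares}) and observing that each vertex is connected to the target path by a vertical path of at most $k$ edges labeled in $Q$; you instead compute the connecting group element $v_i^{-1}w_{k,i}=\gamma(x_{i+1}\cdots x_m)(q_1\cdots q_k)$ directly and factor it as a product of $k$ elements of $Q$. Your algebraic version is slightly more economical and makes transparent exactly where the hypothesis $\rho(x_j)\ge 0$ for \emph{each} $j$ is used, but the two arguments encode the same combinatorics.
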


\begin{proof} We use the procedure described in Lemma \ref{newpaths} in the special case $\mu=\mu_1\mu_2$ to obtain the new path $\nu=\nu_1\nu_2$.  In this situation, we need only apply the first step of the procedure, which consists of moving $\Lab(\mu_1)$ past each letter $q_i$ in $\Lab(\mu_2)$. Each time we move $\Lab(\mu_1)$ past some $q_i$, we form a new path at Hausdorff distance one from the previous path; see Figure \ref{squares}. Indeed, moving $x_m$ past $q_1$ yields a new path with the label $x_1x_2 \ldots x_{m-1} q_{1,1}x_mq_2q_3\ldots q_k$ at Hausdorff distance one from $\mu$.  After moving each letter $x_j$ of $\Lab(\mu_1)$ past $q_1$ we have a path with label $q_{1,m}x_1\ldots x_m q_2\ldots q_k$, which is still at Hausdorff distance one from $\mu$. Repeating this procedure $k$ times to move $\Lab(\mu_1)$ past each $q_i$ forms a rectangle. Setting $q_i'=q_{i,m}$ in the statement of the lemma results in a path of the desired form. 

\begin{figure}
\centering
\def\svgscale{0.6}
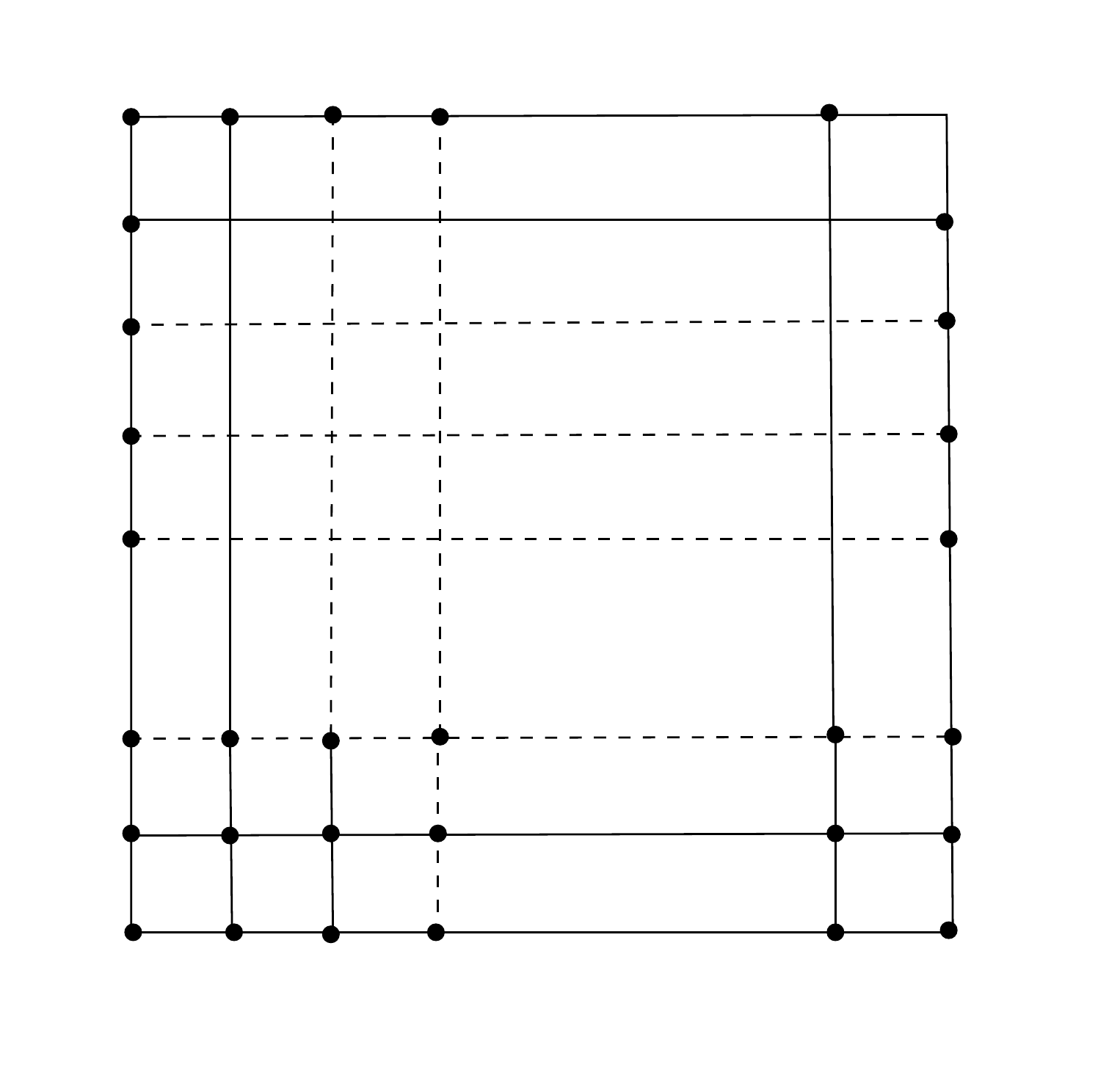
\caption{Applying Lemma \ref{newpaths} to the path $\mu=\mu_1\mu_2$.}
\label{squares}
\end{figure}

It can be seen from Figure \ref{squares} that any vertex $v$ of $\mu$ (respectively, $\nu$) is at distance at most $k$ from a vertex of the path $\nu_2$ (respectively, $\mu_1$). Indeed, if $v$ is a vertex of $\mu$ (respectively, $\nu$), then there is a vertical path consisting of at most $k$ edges from $v$ to a vertex of $\nu_2$ (respectively, $\mu_1$).
The final statement of the lemma follows from applying Lemma \ref{control}.
\end{proof}

The proof of the following lemma is similar to that of \cite[Proposition 4.6]{Amen}.  However, since in our situation $\Ga(\Z^n,Z_\rho)$ is only quasi-isometric to a line and not actually a line, some additional subtleties arise.

\begin{lem} \label{hyp}
$\Ga(G, Q\cup Z_\rho)$ is hyperbolic. 
\end{lem}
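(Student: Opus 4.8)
The plan is to verify the Rips thin-triangles condition for $\Ga(G,Q\cup Z_\rho)$ directly, following the outline of the proof of \cite[Proposition~4.6]{Amen} but feeding in Lemmas \ref{control}, \ref{newpaths} and \ref{moveQ} in place of the elementary facts about $\Z$ used there. Since $G$ acts isometrically and vertex-transitively on $\Ga(G,Q\cup Z_\rho)$, it suffices to produce a uniform constant $\delta$ such that every geodesic triangle with one vertex at the identity is $\delta$-thin. Throughout, let $\pi\colon G\to\Z^n$ be the quotient homomorphism and set $f=\rho\circ\pi\colon G\to\R$. Then $f$ is $C_\rho$-Lipschitz on $\Ga(G,Q\cup Z_\rho)$, since an edge labeled in $Q\subseteq H=\ker\pi$ leaves $f$ unchanged while an edge labeled in $Z_\rho$ changes $f$ by at most $C_\rho$; this function plays the role of the ``height'' homomorphism $G\to\Z$ in the case $n=1$.

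\textbf{First step: normalizing the triangle.} Given an arbitrary geodesic triangle with sides $\sigma_1,\sigma_2,\sigma_3$, Lemma \ref{newpaths} replaces each $\sigma_i$ by a geodesic $\tau^{(i)}=\tau^{(i)}_1\tau^{(i)}_2\tau^{(i)}_3$ in preferred form with the same endpoints as $\sigma_i$ and at Hausdorff distance at most $D_0:=2(k_0+1)\delta'+2k_0$ from it; the $\tau^{(i)}$ again form a geodesic triangle, so thinness of the original triangle follows from thinness of this one with the constant enlarged by $2D_0$. By Lemma \ref{control}, each middle segment $\tau^{(i)}_2$ is a geodesic with label in $H$, hence has length at most $k_0$, so each side $\tau^{(i)}$ carries only a bounded-diameter ``valley'' separating a descending geodesic $\tau^{(i)}_1$ -- along which $f$ drops by exactly $C_\rho$ at every edge but (possibly) one, this being the steep geodesic built at the end of the proof of Lemma \ref{newpaths} -- from an ascending geodesic $\tau^{(i)}_3$, along which $f$ is non-decreasing. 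Moreover $\tau^{(i)}_1$ and $\tau^{(i)}_3$ each lie in a single coset of $\Z^n$ in $G$ (their edges are labeled in $Z_\rho\subseteq\Z^n$), and each such coset with its induced metric is a quasi-line, since $\Ga(\Z^n,Z_\rho)$ is one by \cite[Lemma~4.15]{ABO}.

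\textbf{Core step.} The heart of the proof is then a case analysis on the triangle $v_1v_2v_3$, comparing the heights $f(v_1),f(v_2),f(v_3)$ with the three valley depths. The two structural observations I would use are: (i) at each vertex $v_i$ the two incident sides begin (respectively end) with geodesics lying in the \emph{same} coset $v_i\Z^n$ and descending in $f$ toward the adjacent valleys, so -- being two geodesics issuing from $v_i$ in the common ``downward'' direction of the quasi-line $v_i\Z^n$ -- they fellow-travel down to the shallower of the two adjacent valleys; and (ii) Lemma \ref{moveQ} lets one push the bounded $Q$-portion of a side past an adjacent $Z_\rho$-portion while moving every point at most $k_0$, which is what matches up the portions of two sides descending into (coarsely) a common valley. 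Using these, together with the facts that $f$ is $C_\rho$-Lipschitz and that the descents are steep (so the length of a side is pinned down by the heights of its endpoints and the depth of its valley, up to an additive $O(k_0)$), one shows that no side can dip strictly below the level reached by the other two -- otherwise that side would be strictly longer than the concatenation of the other two, contradicting the triangle inequality -- and hence that every point of a side lies within a uniformly bounded distance of the union of the other two sides. The resulting constant depends only on $k_0$, $C_\rho$ and $\delta'$, and the same bound handles the degenerate (lineal) case, in which $\Ga(G,Q\cup Z_\rho)$ is itself a quasi-line.

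\textbf{Main obstacle.} I expect the core step to be where essentially all the work lies, precisely because $\Ga(\Z^n,Z_\rho)$ is only quasi-isometric to a line rather than equal to one. In the $n=1$ situation of \cite{Amen} the descending legs organize into a single tree-like picture over a genuine line; here they thread through many distinct cosets of $\Z^n$ joined by bounded $Q$-jumps, and one has to keep track of which coset each portion of each side occupies and convert the $\delta'$-hyperbolicity of the $\Z^n$-direction, together with the steepness of descents and the control from Lemma \ref{moveQ}, into honest fellow-traveling statements. By contrast, the $Q$-portions of the sides are harmless: Lemmas \ref{control} and \ref{moveQ} bound their length and pin down how they interact with the rest of each path.
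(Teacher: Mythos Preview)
Your normalization step via Lemma \ref{newpaths} and your use of the height function $f=\rho\circ\pi$ match the paper exactly, and the structural observations (i) and (ii) you isolate are the right ones. The substantive difference is in the ``core step'': the paper does \emph{not} attack geodesic triangles directly. Instead it invokes Papasoglu's theorem that uniform slimness of geodesic \emph{bigons} implies hyperbolicity (valid for any connected simplicial graph), and then proves bigons are slim. This buys a lot: with only two sides $\alpha_1,\alpha_2$ in normal form $\sigma_i\mu_i\xi_i$, and a chosen point $y_0$ on (say) $\sigma_1$, the paper replaces the concatenation $\xi_1\sigma_2$ by a single $\Z^n$-geodesic $\nu^-\nu^+$, pushes the short $Q$-segments past it with Lemma \ref{moveQ}, and is left with a genuine geodesic \emph{triangle in the quasi-line} $\Gamma(\Z^n,Z_\rho)$ together with a bigon of length $\le 2k_0$. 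One application of $\delta'$-thinness in $\Gamma(\Z^n,Z_\rho)$ and one application of Lemma \ref{moveQ} then land $y_0$ on $\alpha_2$, with no case analysis on valley depths at all.

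Your triangle approach is not wrong in principle, but the ``core step'' as you have written it is only a plan, not an argument: the claim that ``no side can dip strictly below the level reached by the other two --- otherwise that side would be strictly longer than the concatenation of the other two'' needs care, since the ascending legs $\tau^{(i)}_3$ are geodesics in a quasi-line rather than in $\R$, so lengths are controlled by height only up to additive constants and one must track those constants through the comparison. More seriously, the fellow-traveling statement in (i) (two geodesics from $v_i$ in the same quasi-line, heading ``downward'') and the matching-up in (ii) across \emph{different} cosets of $\Z^n$ are exactly the places where the paper's bigon reduction lets you work inside a \emph{single} $\Z^n$-triangle rather than juggling three separate cosets. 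If you want to finish your route, you would essentially end up redoing the bigon computation three times around the triangle; it is cleaner to cite Papasoglu and do it once.
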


\begin{proof}
We will consider paths in the Cayley graph $\Ga(G,Q\cup Z_\rho)$.   We consider $\Gamma(\Z^n,Z_\rho)$ as a subgraph of $\Ga(G,Q\cup Z_\rho)$ in the natural way. Recall that $\delta'\geq 0$ is a hyperbolicity constant of $\Gamma(\Z^n,Z_\rho)$. Let $k_0$ be the constant from Lemma \ref{control}, and let $A=2(k_0+1)\delta'+2k_0$ be the constant from Lemma \ref{newpaths}.

Recall that a \emph{geodesic bigon} is a concatenation $\alpha_1\alpha_2$ where $\alpha_1$ and $\alpha_2$ are geodesics such that the initial point of $\alpha_1$ is the endpoint of $\alpha_2$ and the endpoint of $\alpha_1$ is the initial point of $\alpha_2$.  We will show that geodesic bigons in $\Ga(G,Q\cup Z_\rho)$ are $\delta$--slim for  $\delta=4\delta'+3k_0+2A$; that is, each side is contained in the  $\delta$--neighborhood of the other. Papasoglu shows that this  suffices to prove hyperbolicity of $\Ga(G,Q\cup Z_\rho)$ in  \cite[Theorem~1.4]{Papa}. (While Papasoglu stated this result only for Cayley graphs of groups with respect to finite generating sets, it is straightforward to see that the proof only relies on the fact that the space is a connected graph with the simplicial metric.  This was explicitly noted, for example, in \cite{NeumannShapiro}.)    

Consider a geodesic bigon $\alpha_1'\alpha_2'$ in $\Gamma(G,Q\cup Z_\rho)$.  By Lemma \ref{newpaths}, there is a pair of $A$--Hausdorff close geodesics $\alpha_1$ and $\alpha_2$ with the same endpoints  such that for $i=1,2$ the label of each $\alpha_i$ has the form $p_iu_iq_i$,  where $u_i$ is a  word in $Q$ with length $\leq k_0$, and $p_i,q_i\in \Z^n$ are words $p_i=p_i^1\ldots p_i^{j_i}$ and $q_i=q_i^1\ldots q_i^{m_i}$ satisfying $\rho(p_i^s)\leq 0$ and  $\rho(q_i^r) \geq 0$ for all $1\leq s\leq j_i$ and $1\leq r\leq m_i$. Thus we have  $$\operatorname{Lab}(\alpha_1\alpha_2)= (p_1u_1q_1)(p_2u_2q_2).$$   We will show that the bigon $\alpha_1\alpha_2$ is $(4\delta'+3k_0)$--slim, which will prove the result. 

Let $\alpha_i=\sigma_i\mu_i\xi_i$, where $\operatorname{Lab}(\sigma_i)=p_i$, $\operatorname{Lab}(\mu_i)=u_i$, and $\operatorname{Lab}(\xi_i)=q_i$ for $i=1,2$, so that 
\[
\alpha_1\alpha_2=(\sigma_1\mu_1\xi_1)(\sigma_2\mu_2\xi_2).
\]
 
Suppose that $y_0$ is a point on $\alpha_1$. We will find a point on $\alpha_2$ at distance at most $4\delta'+3k_0$ from $y_0$. (If $y_0$ is a point on $\alpha_2$, an analogous argument will hold.)  If $y_0$ lies on $\mu_1$, then since $\|u_1\|_{Q\cup Z_\rho}\leq k_0$, there are points on  $\sigma_1$ and $\xi_1$ at distance at most $k_0$ from $y_0$. Moreover, if $\sigma_1$ (respectively, $\xi_1$) has length at most $2\delta'+k_0$ and $y_0$ lies on $\sigma_1\mu_1$ (respectively, on $\mu_1\xi_1$) then $y_0$ is at distance at most $2\delta'+2k_0$ from $\alpha_2$. Thus it suffices to assume that $y_0$ lies on $\sigma_1$ or $\xi_1$, at least $2\delta'+k_0$ from the common endpoint with $\mu_1$, and find a point on $\alpha_2$ at distance at most $2\delta'+k_0$ from $y_0$.  We will assume $y_0$ lies on $\sigma_1$.  If $y_0$ lies on $\xi_1$, then a symmetric argument will prove the result.

First, we use the fact that $\Z^n$ is abelian to replace the concatenation of geodesics $\xi_1\sigma_2$ with a geodesic $\nu^-\nu^+$ in $\Gamma(\Z^n,Z_\rho)$, where the label of each edge in $\nu^+$ has non-negative image under $\rho$ and the label of each edge in $\nu^-$ has negative image under $\rho$.  (In the case $y_0$ lies on $\xi_1$, then the first step is to replace $\xi_2\sigma_1$ with the geodesic $\nu^-\nu^+$, instead.)  Thus we have a path $(\sigma_1\mu_1\nu^-)(\nu^+\mu_2\xi_2)$ whose label is equal  to $\Lab(\alpha_1\alpha_2)$ when both are considered as elements of the group $G$; see Figure \ref{fig:bigon}.
By Lemma \ref{moveQ}, there are paths $\omega^-\mu_1'$ and $\mu_2'\omega^+$ at Hausdorff distance at most $k_0$ from $\mu_1\nu^-$ and $\nu^+\mu_2$, respectively, where $\omega^+,\omega^-$ are geodesics in $\Z^n$ with the same labels as $\nu^+, \nu^-$, respectively, and $\mu_1',\mu_2'$ are geodesic with labels $u_1,u_2\in H$, respectively. Consequently, $\mu_i'$ has length $\leq k_0$ for $i = 1,2$.   

\begin{figure}
\centering
\def\svgscale{0.5}
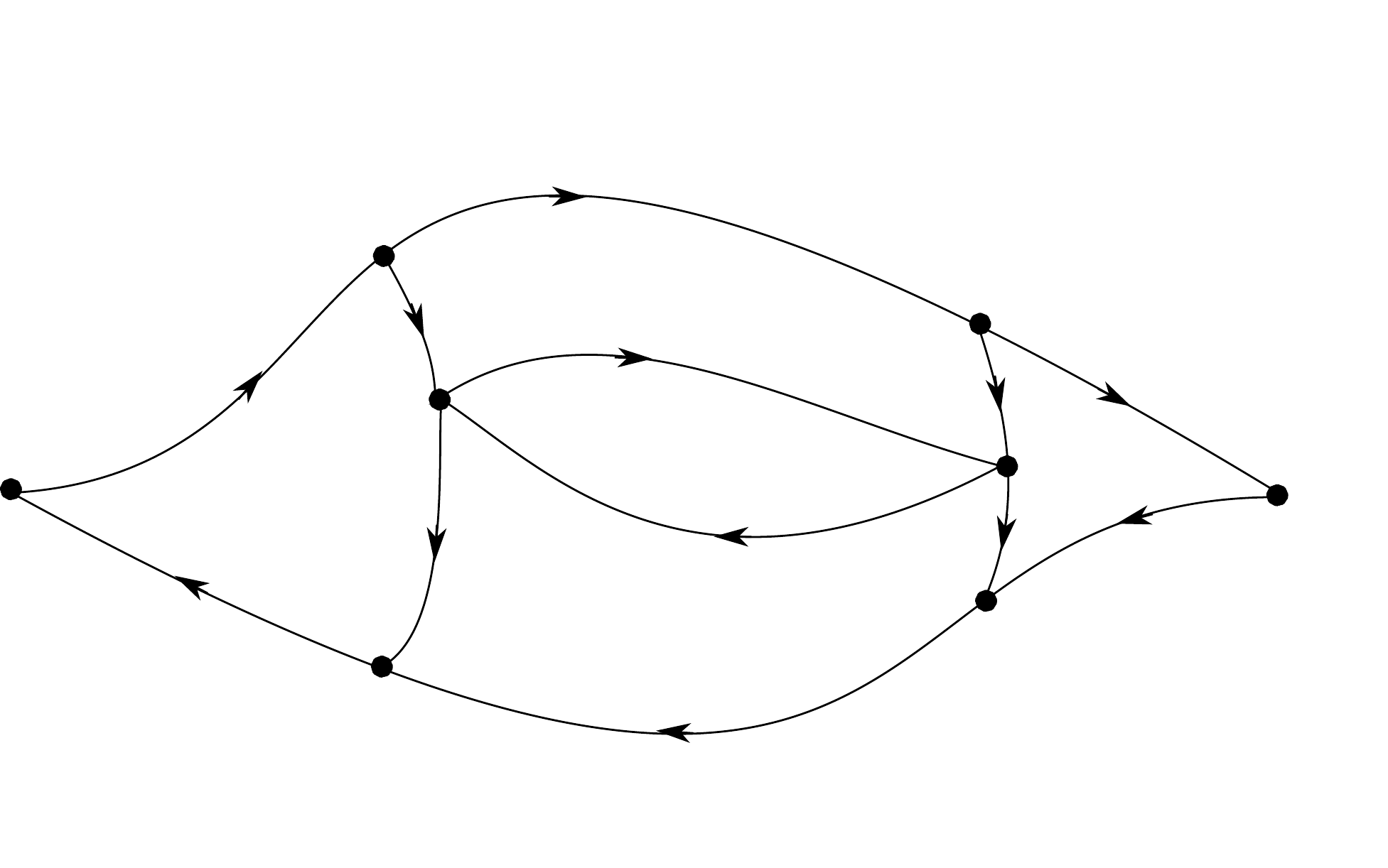
\caption{The decomposition of the bigon $\alpha_1\alpha_2$ in the case $y_0$ lies on $\sigma_1$.}
\label{fig:bigon}
\end{figure}

Since $\alpha_1\alpha_2$ is a loop, its label is the identity element of $G$, and so the image of $\Lab(\alpha_1\alpha_2)$ under the natural projection $G\to \Z^n$ is  the identity element 0 of $\Z^n$.  In particular,  we have $$p_1 + q_1 + p_2 + q_2 =\Lab(\sigma_1) + \Lab(\xi_1) + \Lab(\sigma_2) + \Lab(\xi_2) = 0,$$
where $p_i,q_i$ are considered as elements of $\Z^n$.
 Since $q_1 + p_2 $ and $ \Lab(\nu^-)+\Lab(\nu^+)$ represent the same element of $\Z^n$, we have $$p_1 + \Lab(\nu^-) + \Lab(\nu^+) + q_2 =0.$$ Since $\omega^-, \omega^+$ have the same labels as $\nu^-, \nu^+$ respectively, this gives us that $$p_1 + \Lab(\omega^-) + \Lab(\omega^+) + q_2 =0.$$ Therefore the path $\omega^+\xi_2\sigma_1\omega^-$  is a loop in $\Gamma(\Z^n,Z_\rho)$ and hence in $\Gamma(G,Q\cup Z_\rho)$.  Moreover,  since $\nu^-\nu^+$ is a geodesic, so is $\omega^- \omega^+$. Consequently, $\sigma_1 \omega^- \omega^+ \xi_2$ is a geodesic triangle  in $\Gamma(\Z^n,Z_\rho)$, and $\mu'_1 \mu'_2$ is a geodesic bigon, as shown in Figure \ref{fig:bigon}.  

By the hyperbolicity of  $\Gamma(\Z^n,Z_\rho)$, there is a point $y_1$ on $\omega^- \omega^+\xi_2$ which is at distance at most $\delta'$ from $y_0$.  If $y_1$ lies on $\xi_2$, then we are done, as $\xi_2$ is a subpath of $\alpha_2$. 

Suppose $y_1$ lies on $\omega^-$.  Then Lemma \ref{moveQ} provides a point $y_2$ on $\nu^-$ at distance at most $k_0$ from $y_1$.  The path $\nu^-\nu^+\sigma_2^{-1}\xi_1^{-1}$ is a geodesic triangle in $\Gamma(\Z^n,Z_\rho)$ (recall that $\nu^-\nu^+$ is a geodesic), and therefore there is a point $y_3$ on $\xi_1\sigma_2$ which is at distance at most $\delta'$ from $y_2$.  Suppose that $y_3$ lies on $\xi_1$.  Then we  have $d(y_0,y_3)\leq \displaystyle \sum_{i=0}^2d(y_i,y_{i+1})\leq 2 \delta' +k_0$. However, $y_0$ and $y_3$ both lie on the geodesic $\alpha_1$.  Since $y_0$ is at least $2\delta' +k_0$ from the terminal endpoint of $\sigma_1$ while $y_3$ lies on $\xi_1$, this is a contradiction. We conclude that  $y_3$ must lie on $\sigma_2$.  As we still have $d(y_0,y_3)\leq  2\delta' + k_0$ and $\sigma_2$ is a subpath of $\alpha_2$,  we are done. 

Finally, suppose  that $y_1$ lies on $\omega^+$.  Then  Lemma \ref{moveQ} provides a point $y_2$ on $\nu^+$ at distance at most $k_0$ from $y_1$.  As in the previous paragraph, there must be a point $y_3$ on $\xi_1\sigma_2$ at distance at most $\delta'$ from $y_2$.  If $y_3$ lies on $\xi_1$, then we reach a contradiction exactly as in the previous paragraph.  Thus $y_3$ must lie on $\sigma_2$, and since $d(y_0,y_3)\leq 2\delta' + k_0 $, we are done.

Given a point $y_0$ on $\sigma_1$, we have found a point on $\alpha_2$ at distance at most $2\delta'+k_0$ from $y_0$.  Therefore any point $y_0'$ on $\alpha'_1$ is at distance at most $\delta=4\delta'+3k_0+2A$ from $\alpha'_2$. It follows that the bigon $\alpha'_1\alpha'_2$ is $\delta$--slim, and we conclude that $\Ga(G,Q\cup Z_\rho)$ is hyperbolic.  
\end{proof}

%%%%%%%%%%%%%%%%%%%%%%%%%%%%%%%%%%%%%%%%%%%%%%%%%%%%%%%%
%%%%%%%%%%%%%%%%%%%%%%%%%%%%%%%%%%%%%%%%%%%%%%%%%%%%%%%%

\begin{lem}[{cf. \cite[Proposition 4.6]{Amen}}] \label{focal}
If $Q$ is strictly confining, then the action $G \acts \Ga(G, Q\cup Z_\rho)$ is quasi-parabolic. Otherwise the action is lineal.
\end{lem}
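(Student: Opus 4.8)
The plan is to place the cobounded action $G\acts X:=\Ga(G,Q\cup Z_\rho)$ — hyperbolic by Lemma~\ref{hyp} — in the classification of Theorem~\ref{ClassHypAct}. Coboundedness rules out the parabolic case, so I will first exhibit a loxodromic element and a global fixed point on $\partial X$ (ruling out the elliptic and general-type cases) and then argue that the remaining dichotomy lineal/quasi-parabolic is governed by whether $Q$ is strictly confining. For the loxodromic element, let $y\in\Z^n$ be the element with $\rho(y)=C_\rho$ fixed in the construction of $Z_\rho$, so $y\in Z_\rho$; composing the projection $\pi\colon G=H\rtimes\Z^n\to\Z^n$ with $\rho$ gives a homomorphism $\bar\rho:=\rho\circ\pi\colon G\to\R$ with $|\bar\rho(s)|\le C_\rho$ for every $s\in Q\cup Z_\rho$, hence $|\bar\rho(w)|\le C_\rho\|w\|_{Q\cup Z_\rho}$ for all $w\in G$; applying this to $w=y^m$ gives $\|y^m\|_{Q\cup Z_\rho}\ge m$, and since $y\in Z_\rho$ also $\|y^m\|\le m$, so $m\mapsto y^m x_0$ is an isometric embedding of $\Z$ (with $x_0$ the base vertex) and $y$ is loxodromic. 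For the fixed point, I claim every generator fixes $\xi:=y^{-\infty}=\lim_m y^{-m}x_0$: elements of $Z_\rho\subseteq\Z^n$ commute with $y$ and so move each $y^{-m}x_0$ a distance $\le1$; and for $q\in Q$, Remark~\ref{rem:equivconditions} gives $\gamma(y^m)(q)=y^m q y^{-m}\in Q$ for all large $m$, so $d_X(q\,y^{-m}x_0,\,y^{-m}x_0)=\|\gamma(y^m)(q)\|_{Q\cup Z_\rho}\le1$, and letting $m\to\infty$ shows $q$ fixes $\xi$. Hence $G=\langle Q\cup Z_\rho\rangle$ fixes $\xi$, so the action is neither elliptic nor of general type, and is therefore lineal or quasi-parabolic.

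Suppose first that $Q$ is \emph{not} strictly confining. Then Definition~\ref{def:confining}(a) and the assumption together force $\gamma(z)(Q)=Q$ for every $z\in\Z^n$, and then Definition~\ref{def:confining}(b) forces $Q=H$. Consequently every coset $Hz$ has diameter $\le1$ in $X$, so $\pi$ induces a quasi-isometry $X\to\Ga(\Z^n,Z_\rho)$, which is a quasi-line by \cite[Lemma~4.15]{ABO}. Since $X$ carries the loxodromic $y$, we get $|\Lambda(G)|=2$ and the action is lineal.

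Now suppose $Q$ \emph{is} strictly confining; I must rule out lineality. The key observation is that for every $L>0$ there is a fixed element $v'\in\Z^n$ with $\rho(v')$ as negative as we wish such that every element of $H$ of $(Q\cup Z_\rho)$-word-length $\le L$ lies in $\gamma(v')(Q)$. Indeed, write such an element as a word of length $\le L$ in $Q\cup Z_\rho$; using that $\Z^n$ is abelian, push all the $Z_\rho$-letters to the right, where their product cancels since the word represents an element of $H=\ker\pi$; each of the $\le L$ remaining $Q$-letters has then been conjugated by an element of $\Z^n$ of $\rho$-value $\ge-LC_\rho$, hence lies in $\gamma(v)(Q)$ for any fixed $v$ with $\rho(v)=-LC_\rho$ by Definition~\ref{def:confining}(a); finally collapse the resulting product of $\le L$ elements of $\gamma(v)(Q)$ using Definition~\ref{def:confining}(c) together with the inclusions $Q^{2^m}\subseteq\gamma(z_0^{-m})(Q)$ established in the proof of Lemma~\ref{control}. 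Assume now, for contradiction, that the action is lineal. Since $G$ fixes $\xi$, by standard properties of lineal actions the orbit map identifies $X$ coarsely with $\R$, on which $G$ acts up to bounded error by the translations $x\mapsto x+\beta(g)$, where $\beta$ is the Busemann pseudocharacter (Definition~\ref{Bpc}); as $H$ contains no loxodromics, $\beta$ vanishes on $H$ (Remark~\ref{rem:Hnoloxos}), so $L:=\sup_{h\in H}\|h\|_{Q\cup Z_\rho}<\infty$. Choose $N$ so large that $\gamma(y^N)(Q)\subsetneq Q$ — possible because strict confinement is witnessed by some $z$ with $\rho(z)>0$, hence by every element of $\rho$-value $\ge\rho(z)$, in particular by $y^N$ for $N$ large — and pick $q^*\in Q\setminus\gamma(y^N)(Q)$. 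Then $a_m:=\gamma(y^{-mN})(q^*)\in H$ has $\|a_m\|_{Q\cup Z_\rho}\le L$, so $a_m\in\gamma(v')(Q)$ and hence $q^*=\gamma(y^{mN})(a_m)\in\gamma(y^{mN}v')(Q)$; taking $m$ large enough that $\rho(y^{mN}v')\ge\rho(y^N)$, Definition~\ref{def:confining}(a) applied to $y^{mN}v'(y^N)^{-1}$ gives $\gamma(y^{mN}v')(Q)\subseteq\gamma(y^N)(Q)$, so $q^*\in\gamma(y^N)(Q)$, a contradiction. Therefore the action is quasi-parabolic.

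I expect the strictly confining case to be the main obstacle, for two reasons. First, verifying that every element of $H$ of bounded $(Q\cup Z_\rho)$-word-length lies in a single translate $\gamma(v')(Q)$ requires careful bookkeeping of how conjugation by $\Z^n$ interacts with all three confining conditions and with the exponential-distortion estimate of Lemma~\ref{control}. Second, the deduction $\sup_{h\in H}\|h\|_{Q\cup Z_\rho}<\infty$ from lineality relies on the comparison between the Busemann pseudocharacter and orbit displacement for cobounded lineal actions (cf.\ \cite{Man,Amen}), which, although standard, should be invoked with care rather than taken for granted.
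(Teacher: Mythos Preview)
Your proof is correct. The overall shape matches the paper's---establish a loxodromic and a global fixed point on $\partial X$, handle the non-strict case by showing $Q=H$, then argue the strict case---but your treatment of the strictly confining case is genuinely different from the paper's.

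The paper proceeds \emph{directly}: it picks $z$ with $\rho(z)>0$ and $\gamma(z)(Q)\subsetneq Q$, and then shows that any $h\in\gamma(z^{-i})(Q)\setminus\gamma(z^{-i+1})(Q)$ has word length in $Q\cup Z_\rho$ growing linearly in $i$, by putting $h$ into the normal form of Lemma~\ref{newpaths} and bounding the length of the $\Z^n$-prefix from below. This exhibits explicitly that $H$ has unbounded orbits, so $H$ acts parabolically, which is incompatible with a lineal action on a quasi-line. You instead argue the \emph{contrapositive}: assuming lineality forces $\sup_{h\in H}\|h\|_{Q\cup Z_\rho}<\infty$, and your ``key observation'' (that the ball of radius $L$ in $H$ lies in a single $\gamma(v')(Q)$) then traps $q^*$ back inside $\gamma(y^N)(Q)$, contradicting the choice of $q^*$. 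The paper's route yields extra quantitative information (linear growth of word length along the chain $\gamma(z^{-i})(Q)$), while yours avoids invoking the normal-form lemma and is arguably more self-contained. Your justification that lineality gives $\sup_{h\in H}\|h\|<\infty$ via the Busemann pseudocharacter is fine since $G$ fixes $\xi$ (so the lineal action is orientation-preserving); an equivalent and slightly cleaner phrasing is that a cobounded lineal action is on a quasi-line, on which there are no parabolic isometries, so $H$ (having no loxodromics by Remark~\ref{rem:Hnoloxos}) must be elliptic.
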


\begin{proof} By Remark \ref{rem:Hnoloxos}, the subgroup $H$ cannot contain any loxodromic elements.  This implies that the action of $H$ on $\Ga(G,Q\cup Z_\rho)$ is either elliptic or parabolic. 

If $Q$ is not strictly confining, then for every $z\in \Z^n$ with $\rho(z) >0$, we have that $\gamma(z)(Q) = Q$, and consequently $Q = \gamma(z^{-1})(Q)$.  Let $h \in H$. Using the equivalent version of Definition \ref{def:confining}(b) given in Remark~\ref{rem:equivconditions}, there is a $z \in \Z^n$ such that $\rho(z) >0$ and $\gamma(z)(h) \in Q$. In particular, $h \in \gamma(z^{-1})(Q) =Q$. We conclude that $Q=H$. Thus $H$  has bounded orbits in the action of $G$ on $\Ga(G, Q\cup Z_\rho)$.  Since the action of $\Z^n$ on $\Ga(\Z^n, Z_\rho)$ is lineal, the action of $G$ on $\Ga(G,Q\cup Z_\rho)$ is  also lineal. 

On the other hand, if $Q$ is strictly confining, then there exists a $z\in \Z^n$  such that $\gamma(z)(Q)$ is a proper subset of $ Q$.   Remark \ref{rem:equivconditions} implies that we may choose such a $z$ satisfying  $\rho(z) >0$.  Note that we may extend the homomorphism $\rho$ to all of $G$ by setting $\rho$ to be identically zero on $H$. Since $\rho(z^n)$ grows linearly while the elements of $Q \cup Z_\rho$ have bounded image under the homomorphism $\rho$, the word  lengths $\|z^n\|_{Q\cup Z_\rho}$ must grow linearly as well. Thus $z$ acts loxodromically  on $\Gamma(G, Q \cup Z_\rho)$. 

Consider the strictly ascending chain 
$$
Q \subsetneq \gamma(z^{-1})(Q) \subsetneq \gamma(z^{-2})(Q)\subsetneq\cdots\subsetneq \gamma(z^{-k})(Q) \subsetneq\cdots.
$$ We will show that the word lengths of elements of $\gamma(z^{-i})(Q) \setminus \gamma(z^{-i+1})(Q)$ have linearly growing word length in $Q\cup Z_\rho$. This will then imply that $H$ has unbounded orbits in the action on $\Gamma(G,Q\cup Z_\rho)$.

Consider $h\in \gamma(z^{-i})(Q) \setminus \gamma(z^{-i+1})(Q)$ and suppose $\gamma(w)(h)\in Q$ for some $w\in \Z^n$. If $\rho(z^{i-1}w^{-1})\geq 0$, then \[\gamma(z^{i-1})(h)=\gamma(z^{i-1}w^{-1})\big(\gamma(w)(h)\big)\in Q.\] However, this contradicts our assumption on $h$, and so we must have $\rho(w)>\rho(z^{i-1})$.

By Lemma \ref{newpaths}, we may write $h$ as a geodesic word \[h=z_1\ldots z_r q_1\ldots q_s w_1\ldots w_t \] where $z_i\in Z_\rho$ satisfy $\rho(z_i)<0$, $q_i\in Q$,  $w_i\in Z_\rho$ satisfy $\rho(w_i)\geq 0$, and $r+s+t=\|h\|_{Q\cup Z_\rho}$. Moreover, we have $s\leq k_0$. Writing $v=z_1\ldots z_r$, $h'=q_1\ldots q_s$, and $w=w_1\ldots w_t$, we have $h=vh'w$, and since $h'\in H$ we must have $w=v^{-1}$. Thus, $h=vh'v^{-1}$ and $\gamma(v^{-1})(h)=h' \in Q^s\subseteq Q^{k_0}$. Recall that there exists $z_0$ such that $\gamma(z_0)(Q\cdot Q)\subseteq Q$. We thus have $\gamma(z_0^{k_0})(Q^{k_0})\subseteq Q$, and therefore $\gamma(z_0^{k_0}v^{-1})(h)\in Q$. By the previous paragraph, this implies that \[k_0\rho(z_0) +\rho(v^{-1})=\rho(z_0^{k_0}v^{-1})>\rho(z^{i-1}).\] 
Thus, $\rho(v^{-1})> (i-1)\rho(z)-k_0\rho(z_0)$. Since the image of any element of $Z_\rho$  under $\rho$ is bounded in absolute value by $C_\rho$, we have \[\|v\|_{Z_\rho} \geq \frac{(i-1)\rho(z)-k_0\rho(z_0)}{C_\rho} \ \ \ \text{ and } \ \ \  \|h\|_{Q\cup Z_\rho} \geq 2\left(\frac{(i-1)\rho(z)-k_0\rho(z_0)}{C_\rho}\right)+1.\]

Therefore $H$ has unbounded orbits in the action on $\Ga(G,Q\cup Z_\rho)$, and so the action of $H$ is parabolic.   Let $ \xi=\lim_{i\to\infty}z^{-i}$. We will show that $G$ fixes $\xi$.   Since $\rho(z^{-i}) < 0$, we have for any $q\in Q$ and each $i\geq 1$ that $$d_{Q \cup Z_\rho}(qz^{-i}, z^{-i}) = d_{Q \cup Z_\rho}(z^{-i}q', z^{-i}) = d_{Q \cup Z_\rho}(q', 1) =1$$ for some $q'\in Q$. Thus $Q$ fixes $\xi$. As $\Z^n$ also fixes $\xi$ (since the action of $\Z^n$ is lineal) and $Q\cup \Z^n$ generates $G$, it follows that all of $G$ fixes $\xi$.  Since $G$ has unbounded orbits and contains loxodromic elements, this shows that the action of $G$ on $\Ga(G,Q\cup Z_\rho)$ is either lineal or quasi-parabolic. Since $H$ acts parabolically, the action must be quasi-parabolic. 
\end{proof}

We now turn our attention to understanding the Busemann pseudocharacter associated to the action  of $G$ on $ \Ga(G, Q\cup Z_\rho)$.   We begin with a general fact about homomorphisms from $\Z^n$ to $\R$.  
\begin{lem} \label{lem:multiplekernel} For any homomorphisms $r, s\colon \Z^n \to \mathbb{R}$, the following are equivalent. 
\begin{enumerate} [(1)]
\item $r$ and $s$ are scalar multiples of each other. 
\item Either $r(z) \geq 0$ if and only if $s(z) \geq 0$ for all $z\in \Z^n$  or $r(z) \geq 0$ if and only if $s(z) \leq 0$ for all $z\in \Z^n$.
\end{enumerate} 
Moreover, if $r(z) \geq 0$ if and only if $s(z) \geq 0$ for all $z\in\Z^n$, then $r$ and $s$ are positive scalar multiples of each other, while if $r(z) \geq 0$ if and only if $s(z) \leq 0$ for all $z\in \Z^n$, then $r$ and $s$ are negative scalar multiples of each other.
\end{lem}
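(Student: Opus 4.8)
The plan is to translate the statement into linear algebra over $\R^n$. Since $\Z^n$ is free abelian on the standard basis $e_1,\dots,e_n$, the homomorphism $r$ is recorded by the vector $a=(r(e_1),\dots,r(e_n))\in\R^n$, and $r(z)=\langle a,z\rangle$ for all $z\in\Z^n$; write $b\in\R^n$ for the vector attached to $s$ in the same way. Under this dictionary, ``$r$ and $s$ are scalar multiples of each other'' becomes ``$a$ and $b$ span the same line'' (read symmetrically, so that it forces $r\equiv0\iff s\equiv0$), and the sign of the proportionality constant is exactly what the ``moreover'' clause records. So I would reduce the lemma to: $b$ is a positive (resp.\ negative) multiple of $a$, or both vanish, if and only if the first (resp.\ second) alternative of (2) holds. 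The direction (1)$\Rightarrow$(2) is then immediate: if $b=\lambda a$ with $\lambda>0$ then $\langle b,z\rangle=\lambda\langle a,z\rangle$ has the same sign as $\langle a,z\rangle$; if $\lambda<0$ the sign flips; and if $a=b=0$ both alternatives hold vacuously.

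The substance is (2)$\Rightarrow$(1). Assume the first alternative, $\langle a,z\rangle\ge0\iff\langle b,z\rangle\ge0$ for all $z\in\Z^n$; applying this to $-z$ as well yields $\langle a,z\rangle=0\iff\langle b,z\rangle=0$ and $\langle a,z\rangle>0\iff\langle b,z\rangle>0$. If $a=0$ then $\langle b,e_i\rangle=0$ for every $i$, so $b=0$ and we are finished; so assume $a\ne0$, and symmetrically $b\ne0$. The key move is to lift the hypothesis off the lattice: for $q\in\Q^n$, clearing denominators by a positive integer $N$ with $Nq\in\Z^n$ and using that scaling by $N>0$ preserves signs, we get $\langle a,q\rangle\ge0\iff\langle b,q\rangle\ge0$ for all $q\in\Q^n$. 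Now consider the open set $U=\{x\in\R^n:\langle a,x\rangle>0,\ \langle b,x\rangle<0\}$: if $U\ne\emptyset$ then, by density of $\Q^n$ in $\R^n$, $U$ contains a rational point, contradicting the lifted hypothesis; hence $U=\emptyset$, and symmetrically $\{x:\langle b,x\rangle>0,\ \langle a,x\rangle<0\}=\emptyset$. Emptiness of $U$ shows the hyperplane $\{x:\langle b,x\rangle=0\}$ is contained in $\{x:\langle a,x\rangle=0\}$: a purported witness $x$ to the contrary can be taken with $\langle a,x\rangle>0$ (replace $x$ by $-x$ if needed), and then $x-\varepsilon b$ lies in $U$ for all small $\varepsilon>0$. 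By symmetry the two hyperplanes coincide, so $b=ta$ for a unique $t\ne0$; evaluating on any $x$ with $\langle a,x\rangle>0$ and invoking $U=\emptyset$ forces $t>0$. The second alternative of (2) then follows by applying this case to $a$ and $-b$, which simultaneously yields the ``moreover'' statement.

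The one genuine obstacle is that $\Z^n\cap\ker r$ need not determine the real hyperplane $\{x:\langle a,x\rangle=0\}$ — it is merely $\{0\}$ when the entries of $a$ are rationally independent — so one cannot simply compare integer kernels. The device that circumvents this is the two-step passage from $\Z^n$ to $\Q^n$ and then to $\R^n$: the sign condition survives the first step because it is scale-invariant, and density of $\Q^n$ then forces the \emph{open} obstruction set $U$ to be empty. Everything after that is elementary sign-chasing.
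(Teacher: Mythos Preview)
Your proof is correct and follows essentially the same approach as the paper: both represent $r$ and $s$ by vectors in $\R^n$ and argue that the ``bad'' open sign regions must be empty because otherwise they would contain lattice points. The paper phrases this by contrapositive (if $v^\perp\neq w^\perp$ then $\R^n\setminus(v^\perp\cup w^\perp)$ splits into four open convex cones, each of which contains an integer vector), while you argue directly by lifting the sign hypothesis from $\Z^n$ to $\Q^n$ and then using density of $\Q^n$ in $\R^n$; these are two packagings of the same idea, and your version is somewhat more explicit about why the passage from the lattice to $\R^n$ is legitimate.
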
 
\begin{proof}
 Clearly (1) implies (2). The homomorphisms $r$ and $s$ are given by $r(z)=v\cdot z$ and $s(z)=w\cdot z$ for some vectors $v,w\in \R^n$. The homomorphisms are proportional if and only if $v$ and $w$ are proportional, which is the case if and only if the orthogonal complements $v^\perp$ and $w^\perp$ in $\R^n$ are equal. If $v^\perp$ and $w^\perp$ are not equal, then they partition $\R^n\setminus (v^\perp \cup w^\perp)$ into four convex cones corresponding to the four possible pairs of signs of $v\cdot u$ and $w\cdot u$. Each of these regions contains an integer vector and therefore (2) also implies (1).
\end{proof}

\begin{lem} \label{lem:Busemann} Let $\beta$ be the Busemann pseudocharacter associated to the action of $G$ on $\Ga(G,Q\cup Z_\rho)$.  For any $g=hz\in G$, where $h\in H$ and $z\in \Z^n$, we have that $\beta(g)=\beta(z)$.  In other words,  $\beta$  is the composition of  the natural projection of $G$ to $\Z^n$ and the restriction of $\beta$ to $\Z^n$.  Moreover, the restriction of $\beta$ to $\Z^n$ is proportional to the homomorphism $\rho$. 
\end{lem}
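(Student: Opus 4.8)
The plan is to prove the three assertions in order: first that $\beta$ vanishes on $H$, then that $\beta$ factors through the projection $G\to\Z^n$, and finally that the restriction of $\beta$ to $\Z^n$ is proportional to $\rho$. For the first assertion, recall from Remark~\ref{rem:Hnoloxos} that every $h\in H$ is conjugate in $G$ to an element of $Q$, hence (since $Q$ is exponentially distorted by Lemma~\ref{control}) $h$ is not loxodromic for the action on $\Ga(G,Q\cup Z_\rho)$. By the characterization of loxodromics in terms of the Busemann pseudocharacter recalled in Definition~\ref{Bpc} (an element $g$ is loxodromic iff $\beta(g)\neq 0$), this gives $\beta(h)=0$ for all $h\in H$. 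Wait --- one must be slightly careful: $\beta$ being a pseudocharacter, I know $\beta(h)=0$ directly from $h$ not loxodromic, so $\beta|_H\equiv 0$.

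For the second assertion, I would use that $\beta$ is a pseudocharacter (homogeneous quasimorphism) of some defect $D$, so for $g=hz$ with $h\in H$, $z\in\Z^n$, we have $|\beta(g)-\beta(h)-\beta(z)|\le D$, giving $|\beta(g)-\beta(z)|\le D$ since $\beta(h)=0$. To upgrade the bounded difference to equality, apply this to powers: $g^m=h_m z^m$ for some $h_m\in H$ (using that $H$ is normal), so $|\beta(g^m)-\beta(z^m)|\le D$, i.e. $|m\beta(g)-m\beta(z)|\le D$ for all $m$, forcing $\beta(g)=\beta(z)$. This shows $\beta$ is the composition of the projection $G\to\Z^n$ with $\beta|_{\Z^n}$.

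For the final assertion --- that $\beta|_{\Z^n}$ is proportional to $\rho$ --- I would invoke Lemma~\ref{lem:multiplekernel}: it suffices to show that for $z\in\Z^n$, $\beta(z)\ge 0$ if and only if $\rho(z)\ge 0$ (or the reversed-sign version). The key point is the fixed point $\xi=\lim_{i\to\infty}z_0^{-i}$ constructed in the proof of Lemma~\ref{focal} (for a strictly confining witness $z_0$ with $\rho(z_0)>0$), which is fixed by all of $G$; the sign of $\beta(g)$ records whether $g$ translates toward or away from $\xi$. Concretely: any element $z\in Z_\rho$ moves the basepoint a bounded amount, so $\|z^n\|_{Q\cup Z_\rho}$ and hence $\beta(z)$ are controlled by $\rho(z^n)=n\rho(z)$; more precisely, one shows that if $\rho(z)>0$ then $z$ translates away from $\xi$ (so $\beta(z)<0$, or $>0$ depending on orientation conventions) by the same linear-growth argument used in Lemma~\ref{focal} to show $z_0$ is loxodromic, and if $\rho(z)=0$ then $z\in Z_\rho$ has bounded orbit-displacement-from-$\xi$ hence $\beta(z)=0$. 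Since both $\beta|_{\Z^n}$ and $\rho$ have the same kernel-halfspace structure (the set $\{\rho\ge 0\}$ equals $\{\beta\le 0\}$ or $\{\beta\ge 0\}$), Lemma~\ref{lem:multiplekernel} yields proportionality.

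The main obstacle I anticipate is the final step: carefully showing that the \emph{sign} of $\beta(z)$ is determined by the sign of $\rho(z)$ for \emph{all} $z\in\Z^n$, not just for the particular strictly confining witness $z_0$. The subtlety is that $\beta$ is only a pseudocharacter and the action need not be regular, so I cannot simply say "$\beta$ is a homomorphism proportional to $\rho$." The safe route is to argue entirely through the geometry of $\xi$: elements with $\rho(z)>0$ are shown to be loxodromic attracting toward one end and elements with $\rho(z)=0$ lie in $Z_\rho$ and are elliptic or have bounded Busemann value, which pins down $\mathrm{sign}(\beta(z))$ in terms of $\mathrm{sign}(\rho(z))$ uniformly, after which Lemma~\ref{lem:multiplekernel} does the rest. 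Handling the case where $Q$ is not strictly confining is easy since then $Q=H$ and the action is lineal with Busemann pseudocharacter visibly pulled back from the lineal action of $\Z^n$ on $\Ga(\Z^n,Z_\rho)$, whose Busemann character is proportional to $\rho$ by the defining property of $Z_\rho$.
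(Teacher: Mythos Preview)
Your proposal is correct and follows essentially the same route as the paper: vanish on $H$ via Remark~\ref{rem:Hnoloxos}, upgrade the quasimorphism bound to $\beta(hz)=\beta(z)$ by passing to powers, then match signs of $\beta$ and $\rho$ on $\Z^n$ and invoke Lemma~\ref{lem:multiplekernel}.

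Your ``main obstacle'' dissolves once you note, as the paper does, that any pseudocharacter on an abelian group is already a homomorphism; hence $\beta|_{\Z^n}$ is a genuine homomorphism $\Z^n\to\R$ and there is no regularity issue to worry about. For the sign-matching step the paper is slightly cleaner than your sketch: rather than arguing directly with $\xi$ and linear growth, it observes that $\Ga(\Z^n,Z_\rho)$ sits as a quasi-convex quasi-line in $\Ga(G,Q\cup Z_\rho)$, so the loxodromic/elliptic behaviour of $z'\in\Z^n$ on the big graph is the same as on $\Ga(\Z^n,Z_\rho)$, and then \cite[Lemma~4.15]{ABO} says $z'$ is loxodromic with repelling point $\xi$ there iff $\rho(z')>0$, and elliptic iff $\rho(z')=0$. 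This gives $\beta(z')\ge 0\iff\rho(z')\ge 0$ in one stroke, with no separate case analysis for strictly versus non-strictly confining $Q$.
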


\begin{proof}  
By Remark \ref{rem:Hnoloxos}, $H$ cannot contain any loxodromic isometries. Thus $\beta(h)=0$ for all $h\in H$.  Since $H$ is a normal subgroup of $G$, it follows from  \cite[Lemma~4.3]{Amen} that $\beta$ induces a homogeneous quasi-character on $G/H\cong \Z^n$.  In particular, for any $g=hz\in G$, we have $\beta(g)=\beta(z)$. To see this, let $D$ be the defect of $\beta$, and let $r$ be a positive integer. We have $(hz)^r=h'z^r$ for some $h'\in H$. Since $\beta(h')=0$ we obtain $|\beta((hz)^r)-\beta(z^r)|\leq D$. By using homogeneity, the left hand side is equal to $r|\beta(hz)-\beta(z)|$. Dividing both sides by $r$ and letting $r\to \infty$ we have $\beta(hz)=\beta(z)$. Additionally, since $\Z^n$ is abelian, all pseudocharacters are homomorphisms, and hence  $\beta$ is a homomorphism (see \cite[Proposition 2.65]{scl}).

We now turn to the ``moreover" statement of the lemma. For any $z'\in \Z^n$, to understand the action of $ \langle z' \rangle $ on $\Ga(G,Q\cup Z_\rho)$, we need only consider the action of $\Z^n$ on $\Ga(\Z^n,Z_\rho)$. This is because $\Ga(\Z^n, Z_\rho)$ is a quasi-line and hence quasi-convex in $\Ga(G, Q \cup Z_\rho)$. Now $\beta(z') > 0$ if and only if $z'$ is a loxodromic element in the action on $\Ga(G, Q\cup Z_\rho)$ with repelling fixed point $\xi$.  This occurs if and only if $z'$ is loxodromic with respect to the action of $\Z^n$ on $\Ga(\Z^n,Z_\rho)$ with repelling fixed point $\xi$.  By \cite[Lemma 4.15]{ABO}, this is true if and only if  $\rho(z') >0$. 

Recall that $\Ga(\Z^n,Z_\rho)$ is a quasi-line. In an action on a quasi-line, all elements are either elliptic or loxodromic.  Thus if $\beta(z') =0$, then $z'$ is an elliptic element in the action of $\Z^n$ on $\Ga(\Z^n,Z_\rho)$ and hence in the action of $G$ on $\Ga(G, Q\cup Z_\rho)$.  It again follows from \cite[Lemma 4.15]{ABO} that this happens if and only if $\rho(z') = 0$. 

We have shown that $\beta(z') \geq 0$ if and only if $\rho(z') \geq 0$. By Lemma \ref{lem:multiplekernel}, we see that  $\rho$ and $ \beta$ are positive scalar multiples of each other.  
\end{proof}

  Lemmas \ref{hyp}, \ref{focal}, and \ref{lem:Busemann} prove Theorem \ref{thm:main}(i), (ii), and (iii), respectively.

\subsection{Confining subsets from actions}
Throughout this section, we fix a group $G=H\rtimes_\gamma \Z^n$ and a cobounded action $G\curvearrowright X$ on a hyperbolic space with a global fixed point $\xi \in \partial X$.   We additionally assume that the associated Busemann pseudocharacter $\beta$ satisfies $\beta(H)=0$. Thus $\beta$ restricts to a pseudocharacter $\Z^n\to \R$. As before, we see that $\beta$  is a homomorphism and, in fact, $\beta(hz)=\beta(z)$ for $h\in H,z\in \Z^n$.

We also fix the following data for the rest of the subsection.  Let $\delta$ be a constant of hyperbolicity for $X$.  
Since $\beta$ is non-zero and $\beta(H)=0$, there is an element $z_0\in \Z^n$ such that $\beta(z_0)\neq 0$.  Thus $z_0$ is a loxodromic element, which must fix the point $\xi$. Let $\nu\neq \xi$ be the other fixed point of $z_0$ in $\partial X$.  Let $c\colon (-\infty,\infty)\to X$ be a $(1,20\delta)$--quasi-geodesic from $\nu$ to $\xi$.  
Such a $(1,20\delta)$--quasi-geodesic $c$ always exists between any two points of $\partial X$ (see \cite[Remark~2.16]{boundaries}, for instance). We fix a basepoint $x=c(0)$ of $X$.

Our first goal is to prove the following proposition.   
This proposition has been used implicitly (in the $n=1$ case) in all three of the papers \cite{Bal, AR, Amen},  but to the best of the authors' knowledge it has never received a detailed proof. Because of its fundamental importance, we include a proof here.

\begin{prop}
\label{prop:smalltranslation}
Let $G=H\rtimes_\gamma \Z^n$ be a group acting on a hyperbolic space $X$ and fixing a point $\xi\in \partial X$, and let $\beta\colon G\to \R$ be the associated Busemann pseudocharacter.  Assume $\beta(H)=0$, and fix a basepoint $x\in X$ as above.  There exists a function $A\colon \R_{\geq 0}\to \R$ and a constant $B>0$ such that the following holds.  For any $g\in G$ and $z\in \Z^n$ with $d(x,gx)\leq \constfive$ and $\beta(z)\leq -A(\constfive)$, we have $d(gz(x),z(x))\leq |\beta(g)|+B$.
\end{prop}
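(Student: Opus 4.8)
\emph{Setup.} Let $b_\xi\colon X\to\R$ denote the Busemann function based at $x$ pointing at $\xi$, i.e.\ $b_\xi(y)=\limsup_k\big(d(y,x_k)-d(x,x_k)\big)$ for a sequence $x_k\to\xi$; thus $b_\xi(x)=0$, $b_\xi(gx)=q_{\bf x}(g)$ for $g\in G$, and $|b_\xi(gx)-\beta(g)|\le D$, where $D$ is the defect of $\beta$. I will freely use that $b_\xi$ is $1$--Lipschitz, that $b_\xi(c(t))=-t+O(\delta)$ along the quasi-geodesic $c$, and that $b_\xi$ is coarsely equivariant, $b_\xi(gy)=b_\xi(y)+b_\xi(gx)+O(\delta)$ whenever $g$ fixes $\xi$. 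The idea is that once $z$ pushes $x$ far enough toward $\xi$ along $c$, the point $gzx$ differs from $zx$ only by a uniformly bounded perturbation along $c$.

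\emph{Step 1: $\Z^n$ coarsely preserves $c$.} Each $z\in\Z^n$ commutes with the loxodromic $z_0$ and hence preserves its fixed-point pair $\{\nu,\xi\}$; since $z$ fixes $\xi$, it also fixes $\nu$. So $z\cdot c$ is a $(1,20\delta)$--quasi-geodesic with the same endpoints as $c$ and therefore lies within Hausdorff distance $C_1=C_1(\delta)$ of $c$. Combining $d(zx,c)\le C_1$ with the properties of $b_\xi$ above and $|b_\xi(zx)-\beta(z)|\le D$ yields a constant $C_1'=C_1'(\delta,D)$ with $d\big(zx,\,c(-\beta(z))\big)\le C_1'$. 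In particular, if $\beta(z)\le -A(N)$ then $zx$ is $C_1'$--close to $c(\tau)$ for some $\tau\ge A(N)-C_1'$.

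\emph{Step 2: synchronous fellow-travelling toward $\xi$.} As $g$ fixes $\xi$, the ray $g\cdot c|_{[0,\infty)}$ is a $(1,20\delta)$--quasi-geodesic from $gx$ to $\xi$ whose basepoint lies within $N$ of $x$. Two $(1,20\delta)$--quasi-geodesic rays with the same ideal endpoint fellow-travel synchronously: there exist a function $t_0(N)$ (also depending on $\delta$) and a constant $C_2=C_2(\delta)$ such that for every $t\ge t_0(N)$ there is $s$ with $d\big(gc(t),c(s)\big)\le C_2$, and -- crucially -- $C_2$ does not depend on $N$, the $N$--dependence being confined to the catch-up time $t_0(N)$. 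Comparing Busemann depths ($b_\xi(gc(t))=b_\xi(gx)-t+O(\delta)$ versus $b_\xi(c(s))=-s+O(\delta)$, with $|b_\xi(gx)-\beta(g)|\le D$) pins down the parameter: $|s-(t-\beta(g))|\le C_2'$ for some $C_2'=C_2'(\delta,D)$.

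\emph{Step 3: assembly, and the main obstacle.} Set $A(N):=t_0(N)+C_1'+1$ and $B:=2C_1'+C_2+C_2'+20\delta$. Given $g,z$ as in the statement, choose $\tau$ as in Step 1, so $\tau\ge A(N)-C_1'\ge t_0(N)$ and $d(zx,c(\tau))\le C_1'$; applying the isometry $g$ gives $d(gzx,gc(\tau))\le C_1'$. Step 2 with $t=\tau$ provides $s$ with $d(gc(\tau),c(s))\le C_2$ and $|s-\tau|\le|\beta(g)|+C_2'$, and since $c$ is a $(1,20\delta)$--quasi-geodesic, $d(c(s),c(\tau))\le|s-\tau|+20\delta\le|\beta(g)|+C_2'+20\delta$. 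Hence
\[
d(gzx,zx)\le d(gzx,gc(\tau))+d(gc(\tau),c(s))+d(c(s),c(\tau))+d(c(\tau),zx)\le|\beta(g)|+B.
\]
The delicate point is Step 2: one must upgrade the fellow-travelling of the two rays toward $\xi$ from Hausdorff closeness -- whose quality worsens with $N=d(x,gx)$ -- to a \emph{synchronous} matching with an $N$--independent tracking constant, quarantining the $N$--dependence in $t_0(N)$, and then identify the parameter shift with $\beta(g)$ up to a constant via the Busemann bookkeeping; the hypothesis $\beta(z)\le -A(N)$ is exactly what drives $zx$ past the catch-up time $t_0(N)$ so that the uniform constant applies. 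Everything else is routine coarse geometry of $b_\xi$.
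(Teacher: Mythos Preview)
Your proposal is correct and follows essentially the same strategy as the paper: both arguments show that $\Z^n$ coarsely preserves the quasi-axis $c$ with $zx$ landing near $c(-\beta(z))$, that $g$ shifts $c$ by $-\beta(g)$ up to a uniform constant once one is past a catch-up time $t_0(N)$, and then assemble with the triangle inequality. The one noteworthy difference is in how the shift is identified with $-\beta(g)$: the paper iterates powers of $g$ and computes $\beta(g)=\lim q(g^n)/n$ directly from the shift $l$ (their Lemma~3.8), whereas you read off the parameter shift in one step by comparing Busemann depths $b_\xi(gc(t))\approx b_\xi(gx)-t$ and $b_\xi(c(s))\approx -s$; your route is a bit slicker but relies on the quasi-cocycle identity $b_\xi(gy)=b_\xi(y)+b_\xi(gx)+O(\delta)$, which you should cite or justify. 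The ``delicate point'' you flag in Step~2---that the fellow-travel constant $C_2$ is independent of $N$---is exactly what the paper encodes in its constant $r_0$ (and proves more explicitly in their shift lemma), so you are on the same footing.
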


For  simplicity of notation, we use $d$ to denote the metric on $X$ throughout the proof of the proposition.  We may define the Busemann pseudocharacter $\beta$ associated to the action $G\curvearrowright X$ in the following way. First of all, we define $q\colon G\to \R$ by \[q(g)=\limsup_{t\to\infty} \Big(d(gx,c(t))-d(x,c(t))\Big)=\limsup_{t\to\infty} \Big(d(gc(0),c(t))-d(c(0),c(t))\Big).\] The Busemann pseudocharacter is then the homogenization $\beta$ defined by \[\beta(g)=\lim_{n\to\infty} \frac{q(g^n)}{n}.\]

Let $r_0$ be a constant such that any two $(1,20\delta)$--quasi-geodesic rays in $X$ with the same endpoint on $\partial X$ are eventually $r_0$--Hausdorff close and any two bi-infinite $(1,20\delta)$--quasi-geodesics with the same endpoints are $r_0$--Hausdorff close.  For any $g\in G$, the ray $c|_{[0,\infty)}$ and its translate $gc|_{[0,\infty)}$ are both $(1,20\delta)$--quasi-geodesic rays that share the endpoint $\xi$ and thus  are eventually $r_0$--Hausdorff close. Specifically, there are numbers $t_0=t_0(g)$ and $s_0=s_0(g)\geq 0$ depending on $g$ and $x=c(0)$ so that $c|_{[t_0,\infty)}$ and $gc|_{[s_0,\infty)}$ are $r_0$--Hausdorff close and $d(c(t_0),gc(s_0))\leq r_0$.   In other words $s_0$ is roughly how long it takes for the ray $gc|_{[0,\infty)}$ to become close to the ray $c|_{[0,\infty)}$. This depends only on $d(x,gx)$, and $s_0(g)$ may be chosen smaller than a function of $d(x,gx)$.  We consider the difference $l=t_0-s_0$ as the amount that $g$ ``shifts'' the quasi-geodesic $c$, which may be positive or negative.

We will prove the proposition in a series of lemmas.  The first says that $g$ uniformly shifts the entire quasi-geodesic by the same amount $l$. 

\begin{lem}
\label{lem:shift}
There exists a constant $\constone$ such that  $d(c(s+l),gc(s))\leq \constone$ for any $g\in G$ and for all $s\geq s_0$. Additionally, for each $n\in \Z_{\geq 0}$ we have \[d\big(g^nc(s),c(s+nl)\big)\leq n\constone\] for all $s\geq \max\{s_0, s_0+ (n-1)l\}$. 
\end{lem}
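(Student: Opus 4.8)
The plan is to establish the first inequality $d(c(s+l), gc(s)) \leq \constone$ by a "fellow-traveling" argument and then bootstrap to the iterated statement by a telescoping sum. For the first inequality, recall that $c|_{[t_0,\infty)}$ and $gc|_{[s_0,\infty)}$ are $r_0$-Hausdorff close and $d(c(t_0), gc(s_0)) \leq r_0$, where $l = t_0 - s_0$. Fix $s \geq s_0$. Since $gc|_{[s_0,\infty)}$ is Hausdorff close to $c|_{[t_0,\infty)}$, the point $gc(s)$ is within $r_0$ of some point $c(t)$ with $t \geq t_0$. I would then compare arc-length parameters: using that both $c$ and $gc$ are $(1,20\delta)$-quasi-geodesics (so $d$ along them is comparable to parameter difference up to the additive constant $20\delta$), and using the two anchor estimates $d(c(t_0), gc(s_0)) \leq r_0$ and $d(c(t), gc(s)) \leq r_0$, one gets $|(t - t_0) - (s - s_0)| \leq 2r_0 + 4\cdot 20\delta$ or so, hence $|t - (s+l)| \leq 2r_0 + 80\delta$. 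Then $d(gc(s), c(s+l)) \leq d(gc(s), c(t)) + d(c(t), c(s+l)) \leq r_0 + (|t - (s+l)| + 20\delta) \leq 3r_0 + 101\delta =: \constone$. (The exact constant is immaterial; I would just record that it depends only on $r_0$ and $\delta$, and in particular not on $g$.)

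For the iterated statement, I proceed by induction on $n$. The base case $n=0$ is trivial. For the inductive step, suppose $d(g^{n-1}c(s'), c(s' + (n-1)l)) \leq (n-1)\constone$ for all $s' \geq \max\{s_0, s_0 + (n-2)l\}$. Fix $s \geq \max\{s_0, s_0 + (n-1)l\}$. Write $g^n c(s) = g \cdot (g^{n-1}c(s))$. First apply the $n=1$ case with parameter $s$: since $s \geq s_0$, we have $d(gc(s), c(s+l)) \leq \constone$. Then I want to relate $g^{n-1}(gc(s))$ to $g^{n-1}c(s+l)$, but the cleaner route is to write $g^n c(s) = g^{n-1}(gc(s))$ and use that $g$ is an isometry: $d\big(g^n c(s), g^{n-1} c(s+l)\big) = d\big(g^{n-1}(gc(s)), g^{n-1}c(s+l)\big) = d(gc(s), c(s+l)) \leq \constone$. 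Now apply the inductive hypothesis to the point $c(s+l)$, i.e. with $s' = s + l$: I must check $s + l \geq \max\{s_0, s_0 + (n-2)l\}$, which follows from $s \geq \max\{s_0, s_0 + (n-1)l\}$ (if $l \geq 0$ then $s+l \geq s_0 + l \geq s_0$ and $s + l \geq s_0 + nl \geq s_0 + (n-2)l$; if $l < 0$ then $s + l \geq s_0 + nl \geq s_0$ since... — here one must be slightly careful, and the hypothesis $s \geq s_0 + (n-1)l$ is exactly what makes $s + l \geq s_0 + nl$, and one also needs $s + l \geq s_0$, which is why the max in the hypothesis includes $s_0$; I would spell out the two sign cases). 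Thus $d(g^{n-1}c(s+l), c(s+l+(n-1)l)) = d(g^{n-1}c(s+l), c(s+nl)) \leq (n-1)\constone$, and the triangle inequality gives $d(g^n c(s), c(s+nl)) \leq \constone + (n-1)\constone = n\constone$, completing the induction.

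The main obstacle I anticipate is bookkeeping around the parameter ranges: the quantity $l = l(g)$ can be negative, so the "starting parameter" $\max\{s_0, s_0 + (n-1)l\}$ shifts in a way that must be tracked carefully through the induction to guarantee the inductive hypothesis applies at the shifted basepoint $c(s+l)$. A secondary technical point is that $s_0(g)$ depends on $g$; in the displayed iterated bound the same $g$ appears at every stage, so this is harmless, but one should note that $l$ is the same value at each application of the $n=1$ case because it is the same group element $g$ each time. Everything else — the quasi-geodesic arc-length comparison and the telescoping — is routine once the Hausdorff-closeness constant $r_0$ and hyperbolicity constant $\delta$ are fixed as in the setup.
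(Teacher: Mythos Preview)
Your approach is exactly the paper's: the same fellow-traveling comparison for the first inequality (the paper obtains $\constone=60\delta+3r_0$ by the identical estimate), and the same telescoping induction for the iterated bound. Your caution about the $l<0$ case is warranted and in fact cannot be resolved as you hope: the induction genuinely needs $s+jl\geq s_0$ for $0\leq j\leq n-1$, hence $s\geq\max\{s_0,\,s_0-(n-1)l\}$ rather than $s_0+(n-1)l$; this is a sign slip in the lemma statement itself (the paper's own proof text derives ``$s$ and $s+l$ both at least $s_0$'' and then records it as $\max\{s_0,s_0+l\}$), and it is harmless since the lemma is only invoked later with the parameter tending to infinity.
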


\begin{proof}
For each $s\geq s_0$ we have that $gc(s)$ is $r_0$--close to some point $c(t)$ with $t\geq t_0$. We have \[(t-t_0)-20\delta \leq d\big(c(t_0),c(t)\big)\leq d\big(gc(s_0),gc(s)\big)+2r_0 \leq (s-s_0)+20\delta+2r_0.\] Thus, $(t-t_0)\leq (s-s_0)+40\delta+2r_0$. By the same reasoning, we find $(s-s_0)\leq (t-t_0)+40\delta+2r_0$. In other words, \[|(s-s_0)-(t-t_0)|\leq 40\delta+2r_0.\] 
We may rewrite this as \begin{equation}\label{eqn:stl}|(s-t)+l|\leq 40\delta+2r_0.\end{equation} We conclude that  \[d\big(gc(s),c(s+l)\big)\leq d\big(gc(s),c(t)\big)+d\big(c(t),c(s+l)\big)\leq r_0 + |(s+l)-t|+20\delta\leq 60\delta+3r_0,\] where the first inequality follows from the triangle inequality, the second from our choice of $t$ and the fact that $c$ is a $(1,20\delta)$--quasi-geodesic, and the third from \eqref{eqn:stl}. Setting $\constone=60\delta+3r_0$ gives the first inequality in the statement of the lemma.

For the second inequality, note that $d(g^2c(s),gc(s+l))\leq \constone$ for $s\geq s_0$. By the first inequality, the point $gc(s+l)$ is in turn $\constone$--close to the point $c(s+2l)$ as long as $s+l$ is also at least $s_0$. In other words, as long as $s$ and $s+l$ are both at least $s_0$, the point $g^2c(s)$ is $2\constone$-close to $c(s+2l)$. Thus, $g^2$ shifts points of $c$ by $2l$, but the constant of closeness degrades from $\constone$ to $2\constone$ and $s_0$ degrades to $\max\{s_0,s_0+l\}$.
An induction argument using this reasoning gives the second inequality, completing the proof of the lemma.
\end{proof}

The next lemma shows that the shift constant $l$ and the closeness constant $\constone$ give bounds for $\beta(g)$.

\begin{lem}
\label{lem:quasimorphbounds}
We have 
\[-l-\constone\leq \beta(g)\leq -l+\constone.\]
\end{lem}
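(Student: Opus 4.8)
The plan is to estimate $q(g^n)$ directly using Lemma \ref{lem:shift} and then pass to the limit defining $\beta(g)$. Recall that $q(g^n) = \limsup_{t\to\infty}\big(d(g^n c(0), c(t)) - d(c(0), c(t))\big)$. The key observation is that by Lemma \ref{lem:shift}, for $s$ large enough (specifically $s \geq \max\{s_0, s_0 + (n-1)l\}$), we have $d(g^n c(s), c(s+nl)) \leq n\constone$. So I would try to relate $g^n c(0)$ to $g^n c(s)$ and then to $c(s+nl)$, tracking errors carefully.

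First I would fix $n \geq 1$ and pick a large parameter $s$ (larger than the thresholds appearing in Lemma \ref{lem:shift}). Then for $t$ much larger than $s + n|l|$, I estimate $d(g^n c(0), c(t))$ by going $g^n c(0) \to g^n c(s) \to c(s+nl) \to c(t)$. Along the quasi-geodesic $c$, since $c$ is a $(1,20\delta)$--quasi-geodesic and (for $t$ large) the points $c(0), c(s+nl), c(t)$ appear in order along $c$, we get $d(c(s+nl), c(t)) = d(c(0),c(t)) - (s+nl) + O(\delta)$ and $d(g^n c(0), g^n c(s)) = d(c(0), c(s)) = s + O(\delta)$ (using that $g^n$ acts by isometry and $c$ is a quasi-geodesic). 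Combining via the triangle inequality, $d(g^n c(0), c(t)) - d(c(0), c(t)) \leq s + n\constone + \big({-}(s+nl) + O(\delta)\big) = -nl + n\constone + O(\delta)$, and the reverse triangle inequality gives the matching lower bound $\geq -nl - n\constone - O(\delta)$. Taking $\limsup_{t\to\infty}$ yields $|q(g^n) + nl| \leq n\constone + O(\delta)$; dividing by $n$ and letting $n\to\infty$, the additive $O(\delta)$ term vanishes and $\beta(g) = \lim_n q(g^n)/n$ satisfies $|\beta(g) + l| \leq \constone$, which is exactly the claimed inequality $-l - \constone \leq \beta(g) \leq -l + \constone$.

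The main obstacle I anticipate is bookkeeping the order of points along the quasi-geodesic $c$ and ensuring the $O(\delta)$-type errors really are independent of $n$ (so that they wash out after dividing by $n$). In particular, one must be careful that the $s_0$ threshold and the closeness constant both degrade linearly in $n$ in Lemma \ref{lem:shift}, yet when estimating $d(g^n c(0), c(t))$ one only incurs the $n\constone$ error once (for a single fixed large $s$), not $n$ times per unit of $t$; and one must check that the quasi-geodesic estimates $d(c(a),c(b)) = |a-b| + O(\delta)$ for $a \leq b$ hold with a constant depending only on $\delta$ (here $20\delta$). A clean way to handle the ordering issue is to note that since $g^n$ is loxodromic-adjacent behavior is not needed—we only use that $g^n c(0)$ is $n\constone$-close to a point $c(s+nl)$ on $c$, and then the value $q(g^n)$ is, up to bounded error, just $-(s+nl) + s = -nl$ by the standard fact that the Busemann function along $c$ at a point $c(\sigma)$ is $-\sigma + O(\delta)$. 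I would also remark that the case $n \geq 1$ suffices since $\beta$ is homogeneous, so no separate argument for negative powers is needed.
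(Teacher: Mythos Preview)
Your overall strategy---estimate $q(g^n)$ via Lemma \ref{lem:shift}, divide by $n$, and let $n\to\infty$---is exactly the paper's approach. The upper bound you sketch is correct. The gap is in the lower bound.

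You assert that ``the reverse triangle inequality gives the matching lower bound $\geq -nl - n\constone - O(\delta)$.'' It does not. Applying it along your path $g^nc(0)\to g^nc(s)\to c(s+nl)\to c(t)$ yields
\[
d\big(g^nc(0),c(t)\big)\;\geq\; d\big(c(s+nl),c(t)\big)-d\big(g^nc(0),g^nc(s)\big)-d\big(g^nc(s),c(s+nl)\big)
\;\geq\; t-2s-nl-n\constone-O(\delta),
\]
so that $d(g^nc(0),c(t))-d(c(0),c(t))\geq -2s-nl-n\constone-O(\delta)$. The extra $-2s$ is fatal when $l>0$, because Lemma \ref{lem:shift} forces $s\geq s_0+(n-1)l$, whence $-2s/n\to -2l$ and you only obtain $\beta(g)\geq -3l-\constone$, not $\beta(g)\geq -l-\constone$. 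Your ``clean way'' in the last paragraph makes the same error: you claim $g^nc(0)$ is $n\constone$--close to a point of $c$, but Lemma \ref{lem:shift} only says this about $g^nc(s)$ for $s$ past the threshold, not about $g^nc(0)$.

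The paper sidesteps this entirely by the isometry trick $d(g^nc(0),c(t))=d(c(0),g^{-n}c(t))$. Then one directly bounds $d(g^{-n}c(t),c(t-nl))=d(c(t),g^nc(t-nl))\leq n\constone$ by Lemma \ref{lem:shift} with $s=t-nl$ (valid once $t$ is large), so no auxiliary parameter $s$ is needed and both the upper and lower bounds on $q(g^n)$ come out symmetrically as $-nl\pm n\constone\pm 40\delta$. If you want to repair your version, either adopt this trick, or give a separate argument (e.g.\ using hyperbolicity to show that $g^nc|_{[0,s]}$ concatenated with $c|_{[s+nl,\infty)}$ is a uniform quasi-geodesic) that the detour through $g^nc(s)$ is almost geodesic; a bare reverse triangle inequality does not suffice.
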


\begin{proof}
 By the triangle inequality, we have that $d\big(g^{-n}c(t),c(0)\big)-d\big(c(t),c(0)\big)$ is bounded above by 
\begin{equation}\label{eqn:upperbd}
    d\big(g^{-n}c(t),c(t-nl)\big)+d\big(c(t-nl),c(0)\big)-d\big(c(t),c(0)\big)
\end{equation}
and bounded below by 
\begin{equation}\label{eqn:lowerbd}
    d\big(c(t-nl),c(0)\big) - d\big(g^{-n}c(t),c(t-nl)\big) - d\big(c(t),c(0)\big).
\end{equation}
If $t\geq \max\{s_0,s_0+(n-1)l\}+nl$, then applying Lemma \ref{lem:shift} with $s=t-n\ell$ and the fact that $c$ is a $(1,20\delta)$--quasi-geodesic implies that \eqref{eqn:upperbd} is at most  $n\constone+(t-nl+20\delta)-(t-20\delta)=n\constone-nl+40\delta$ and \eqref{eqn:lowerbd} is at least $ -nl - nD -40\delta$.  Thus, \[-nl-n\constone-40\delta \leq d\big(g^{-n}c(t),c(0)\big)-d\big(c(t),c(0)\big)\leq -nl+n\constone+40\delta\] for all $t$ sufficiently large. Taking the $\limsup$ on all sides and using that $d(g^{-n}c(t),c(0))=d(c(t),g^nc(0))$, we obtain %the bounds on $q$
\[-nl-n\constone-40\delta \leq q(g^n)\leq -nl+n\constone+40\delta.\]
Dividing these inequalities by $n$ and letting $n$ go to infinity gives the bounds on $\beta$.
\end{proof}

Combining Lemma \ref{lem:shift} with Lemma \ref{lem:quasimorphbounds} we obtain the following corollary.

\begin{cor}
\label{cor:shiftfunction}
There is a constant $\consttwo>0$ so that for any $g\in G$, if $s\geq s_0(g)$ then $d\big(c(s-\beta(g)),gc(s)\big)\leq \consttwo$.
\end{cor}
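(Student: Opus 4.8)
The plan is to combine the two preceding lemmas essentially verbatim, matching the parameter $l$ (the shift of $c$ induced by $g$) against $-\beta(g)$. First I would recall from Lemma \ref{lem:shift} that there is a constant $\constone$ with $d\big(c(s+l),gc(s)\big)\leq \constone$ for every $s\geq s_0(g)$, where $l=t_0(g)-s_0(g)$. Next, Lemma \ref{lem:quasimorphbounds} gives $-l-\constone\leq \beta(g)\leq -l+\constone$, which rearranges to $|\beta(g)+l|\leq \constone$, i.e.\ the real numbers $s-\beta(g)$ and $s+l$ differ by at most $\constone$.

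Since $c\colon(-\infty,\infty)\to X$ is a $(1,20\delta)$--quasi-geodesic, for any two real parameters $a,b$ we have $d\big(c(a),c(b)\big)\leq |a-b|+20\delta$; applying this with $a=s-\beta(g)$ and $b=s+l$ yields $d\big(c(s-\beta(g)),c(s+l)\big)\leq \constone+20\delta$. Then the triangle inequality combined with the bound from Lemma \ref{lem:shift} gives
\[
d\big(c(s-\beta(g)),gc(s)\big)\leq d\big(c(s-\beta(g)),c(s+l)\big)+d\big(c(s+l),gc(s)\big)\leq 2\constone+20\delta
\]
for all $s\geq s_0(g)$, so one may take $\consttwo=2\constone+20\delta$.

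There is no real obstacle here; the only points that need a line of care are that the two inequalities being combined (Lemma \ref{lem:shift} and Lemma \ref{lem:quasimorphbounds}) hold on the same range $s\geq s_0(g)$, and that the quasi-geodesic comparison $d(c(a),c(b))\leq|a-b|+20\delta$ is valid for arbitrary real arguments (so that there is no issue if $s+l$ or $s-\beta(g)$ happens to be negative). Everything else is bookkeeping with the constants already introduced.
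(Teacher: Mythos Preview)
Your proof is correct and essentially identical to the paper's own argument: both apply the triangle inequality through the point $c(s+l)$, bound the first term via Lemma~\ref{lem:quasimorphbounds} and the quasi-geodesic inequality, bound the second via Lemma~\ref{lem:shift}, and arrive at the same constant $\consttwo=2\constone+20\delta$.
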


\begin{proof}
By the triangle inequality, we have \[d\big(c(s-\beta(g)),gc(s)\big)\leq d\big(c(s-\beta(g)),c(s+l)\big)+d\big(c(s+l),gc(s)\big).\] By Lemma \ref{lem:quasimorphbounds}, $|l+\beta(g)|\leq \constone$. Thus, the first quantity on the right hand side is bounded by $\constone+20\delta$. As long as $s\geq s_0$, the second quantity on the right hand side is bounded by $\constone$. Taking $\consttwo=2\constone+20\delta$ completes the proof. 
\end{proof}

To prove Proposition \ref{prop:smalltranslation} we need one more result.

\begin{lem}
\label{lem:abelianaxisdiam}
If $z\in \Z^n$ then $d\big(zx,c(-\beta(z))\big)\leq \consttwo$.
\end{lem}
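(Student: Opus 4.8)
My plan is to exploit the fact that $z$ fixes, besides $\xi$, the \emph{second} endpoint $\nu$ of the quasi-geodesic $c$. The first step is to verify this: since $\Z^n$ is abelian, $z$ commutes with the loxodromic element $z_0$, hence preserves the two-point fixed set $\{\xi,\nu\}$ of $z_0$ on $\partial X$; as $z$ fixes the global fixed point $\xi$, it must also fix $\nu$. It follows that $zc$ is a bi-infinite $(1,20\delta)$--quasi-geodesic with the same endpoints $\nu$ and $\xi$ as $c$, so by the choice of $r_0$ the quasi-geodesics $zc$ and $c$ are $r_0$--Hausdorff close; in particular there is $t_z\in\R$ with $d\big(zx,c(t_z)\big)\le r_0$. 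The second and main step will be to show that $t_z$ lies within a uniformly bounded distance of $-\beta(z)$.

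For that step I would fix any $s\ge s_0(z)$, so that Corollary \ref{cor:shiftfunction} applies and yields $d\big(zc(s),c(s-\beta(z))\big)\le\consttwo$. Writing $zx=zc(0)$ and inserting the points $zc(0)$ and $zc(s)$ via the triangle inequality, and using $d(zx,c(t_z))\le r_0$, the isometry identity $d\big(zc(0),zc(s)\big)=d\big(c(0),c(s)\big)$, and the fact that $c$ is a $(1,20\delta)$--quasi-geodesic (so $d(c(0),c(s))\in[s-20\delta,\,s+20\delta]$ for $s\ge0$), one sandwiches $d\big(c(t_z),c(s-\beta(z))\big)$ between $s-20\delta-r_0-\consttwo$ and $s+20\delta+r_0+\consttwo$. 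On the other hand, since $c$ is a $(1,20\delta)$--quasi-geodesic, this same distance differs by at most $20\delta$ from $\big|t_z-(s-\beta(z))\big|=\big|(t_z+\beta(z))-s\big|$. Setting $a=t_z+\beta(z)$ and choosing $s$ large enough that $s>a$, so $|a-s|=s-a$, both the upper and lower estimates collapse to $|a|\le r_0+\consttwo+40\delta$; that is, $\big|t_z+\beta(z)\big|\le r_0+\consttwo+40\delta$.

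The final step is immediate:
\[
d\big(zx,c(-\beta(z))\big)\le d\big(zx,c(t_z)\big)+d\big(c(t_z),c(-\beta(z))\big)\le r_0+\big(|t_z+\beta(z)|+20\delta\big)\le 2r_0+\consttwo+60\delta ,
\]
a bound depending only on $\delta$, $r_0$, and the constant $\consttwo$ from Corollary \ref{cor:shiftfunction}; replacing $\consttwo$ by this larger constant (and retaining the name) gives the lemma. I expect the only genuinely delicate point to be the one flagged above: Corollary \ref{cor:shiftfunction} controls $zc(s)$ only for $s\ge s_0(z)$, and $s_0(z)$ need not be zero (the element $z$ may shift $c$ far backwards), so one cannot merely plug in $s=0$ there. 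This is precisely why the argument must go through the $r_0$--Hausdorff closeness of the \emph{bi-infinite} quasi-geodesics $zc$ and $c$, which itself depends on $z$ fixing both endpoints of $c$. The remaining estimates are routine triangle-inequality bookkeeping.
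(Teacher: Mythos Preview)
Your argument is correct. Both you and the paper begin identically: since $\Z^n$ is abelian, $z$ fixes both endpoints $\xi,\nu$ of $c$, so $zc$ and $c$ are $r_0$--Hausdorff close as \emph{bi-infinite} quasi-geodesics. From here the paper takes a shorter path: because the closeness holds for all $s$ (not just eventually), one may go back through the definitions and take $s_0(z)=0$, after which Corollary~\ref{cor:shiftfunction} applies directly at $s=0$ and yields $d(zx,c(-\beta(z)))\le\consttwo$ on the nose. You instead leave $s_0(z)$ alone, apply Corollary~\ref{cor:shiftfunction} at some large $s\ge s_0(z)$, and combine it with the point $c(t_z)$ near $zx$ via triangle inequalities; this is more self-contained (it avoids the paper's ``chasing through the proofs''), but the constant you obtain is $2r_0+\consttwo+60\delta$ rather than $\consttwo$. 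Since the lemma as stated asserts the bound $\consttwo$, your final sentence about enlarging and renaming $\consttwo$ is necessary; this is harmless downstream, as every subsequent use of the lemma only needs some uniform constant, but it does mean you have not proved the statement exactly as written.
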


\begin{proof}
Since $x=c(0)$, the orbit of $x$ under $\Z^n$ lies in the orbit of $c$ under $\Z^n$. Recall that $z_0\in \Z^n$ is a loxodromic isometry whose fixed points are the endpoints $\xi$ and $\nu$ of $c$.  Since $\Z^n$ is abelian, every element of $\Z^n$ fixes these endpoints. Thus, for any $z\in \Z^n$, $c$ and $zc$ are $r_0$--Hausdorff close. 

Corollary \ref{cor:shiftfunction} implies that $zc(s)$ is $\consttwo$--close to $c(s-\beta(z))$ for all $s$ sufficiently large. In fact, since $c$ and $zc$ are $r_0$--Hausdorff close, we have that $zc(s)$ is $\consttwo$--close to $c(s-\beta(z))$ for all $s\geq 0$, by chasing through the proofs of the above results. In particular, $zx=zc(0)$ is $\consttwo$--close to $c(-\beta(z))$.
\end{proof}

We are now ready to prove Proposition \ref{prop:smalltranslation}.

\begin{proof}[Proof of Proposition \ref{prop:smalltranslation}]
Fix $\constfive\in \R_{\geq 0}$, and let $g\in G$ satisfy $d(x,gx)\leq \constfive$.  Let 
$s_0=s_0(g)$ be the constant defined after the statement of Proposition \ref{prop:smalltranslation} for this element $g$. Recall that $s_0$ is bounded above in terms of $N$.
By Corollary \ref{cor:shiftfunction}, if $s\geq s_0$ then 
\begin{equation}\label{eqn:s-pg}
d\big(c(s-\beta(g)),gc(s)\big)\leq \consttwo.
\end{equation}
 By Lemma \ref{lem:abelianaxisdiam}, if $z\in\Z^n$ then 
 \begin{equation}\label{eqn:zx}
 d\big(zx,c(-\beta(z))\big)\leq \consttwo.
 \end{equation}

Now suppose that $z\in\Z^n$ satisfies $\beta(z)\leq -s_0+\beta(g)$. Applying the triangle inequality three times we find that $d(zx,gzx)$ is at most \begin{equation}\label{eqn:threetriangles}d\big(zx,c(-\beta(z))\big)+d\big(c(-\beta(z)),gc(-\beta(z)+\beta(g))\big)+d\big(gc(-\beta(z)+\beta(g)),gc(-\beta(z))\big)+d\big(gc(-\beta(z)),gzx\big).\end{equation} Equation \eqref{eqn:zx} bounds the first and last summands in \eqref{eqn:threetriangles} by $E$.  Since $-\beta(z)+\beta(g)\geq s_0$,  applying \eqref{eqn:s-pg} bounds the second summand in \eqref{eqn:threetriangles} by $E$.  The third summand in \eqref{eqn:threetriangles} is bounded by $|\beta(g)|+20\delta$ since $c$ is a $(1,20\delta)$--quasi-geodesic.  

We define \[
A(N) = \sup \{s_0(g)-\beta(g) \mid d(x,gx)\leq \constfive\}
\]  
and
 \[
B=3\consttwo+20\delta.
\]
Since $s_0(g)$ depends only on $d(x,gx)$ and $|\beta(g)|\leq d(x,gx)$, the function $A(N)$ is well-defined.  As $\consttwo$ is uniform, this completes the proof.
\end{proof}

We are now ready to prove the main result of this section, which will immediately imply Theorem~\ref{prop:main}.

\begin{thm} \label{thm:actiontoconf}
Let $G = H \rtimes_\gamma \Z^n$, and let $G\curvearrowright X$ be a cobounded lineal or quasi-parabolic action on a hyperbolic space $X$.  Let $\beta$ be the Busemann pseudocharacter associated to this action, and assume that $\beta(H) =0$, so that $\beta$ restricts to a homomorphism $\Z^n\to \R$. There exists a subset $Q\subseteq H$ which is confining under   $\gamma$ with respect to  $\beta$ such that $X$ is $G$--equivariantly quasi-isometric to $\Gamma(G,Q\cup Z_\beta)$, where $Z_\beta$ is as in \eqref{eqn:Zrho}.  Moreover, if $G\curvearrowright X$ is lineal, then $Q$ is not strictly confining (and therefore $Q=H$), while if $G\curvearrowright X$ is quasi-parabolic, then $Q$ is strictly confining. 
\end{thm}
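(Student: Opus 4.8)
The plan is to realize $X$ as a Cayley graph via the Schwarz--Milnor lemma, to extract $Q$ from the part of that generating set lying in $H$ after suitably enlarging it, and then to carry out the constant bookkeeping needed to see that $Q$ is confining and that $Q\cup Z_\beta$ is equivalent to the Schwarz--Milnor generating set. \emph{Step 1 (reduction).} Since $G\acts X$ is cobounded, Lemma~\ref{lem:MS} provides a constant $D$ and a basepoint $x$ so that for every $R\geq 2D+1$ the set $S_R=\{g\in G: d_X(x,gx)\leq R\}$ generates $G$ and $\Gamma(G,S_R)$ is $G$--equivariantly quasi-isometric to $X$, with all such $S_R$ equivalent; I would take $x=c(0)$, the basepoint fixed before Proposition~\ref{prop:smalltranslation}, and let $\delta$ be a hyperbolicity constant of $X$. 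Because the action is lineal or quasi-parabolic it has a loxodromic element $g=hz$ with $\beta(g)=\beta(z)\neq 0$ (as $\beta(H)=0$), so $\beta|_{\Z^n}\neq 0$, $Z_\beta\subsetneq\Z^n$, and $\Gamma(\Z^n,Z_\beta)$ is a quasi-line by \cite[Lemma~4.15]{ABO}. Two estimates will recur: $|\beta(g)|\leq d_X(x,gx)$ for all $g$, and, from the quasi-line structure together with Lemma~\ref{lem:abelianaxisdiam}, for every $T$ the set $\{z\in\Z^n:|\beta(z)|\leq T\}$ has diameter $O(T)$ in $\Gamma(\Z^n,Z_\beta)$ and moves $x$ a distance at most $T+20\delta+\consttwo$ in $X$; in particular $Z_\beta\subseteq S_R$ for $R$ large.

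\emph{Step 2 (construction of $Q$).} I would fix $R$ large and set $Q=\bigcup_{w\in\Z^n,\ \beta(w)\geq 0}\gamma(w)(S_R\cap H)$. This is symmetric and contains the identity, and condition~(a) of Definition~\ref{def:confining} holds by construction, since $\beta$ is a homomorphism: $\gamma(z)\gamma(w)(S_R\cap H)=\gamma(zw)(S_R\cap H)$ and $\beta(zw)\geq 0$ whenever $\beta(z),\beta(w)\geq 0$. Conditions~(b) and~(c) would come from Proposition~\ref{prop:smalltranslation}: applied with $g=h\in H$ (so $\beta(g)=0$) and any $z$ with $\beta(z)\leq -A(N)$ where $N\geq d_X(x,hx)$, it gives $d_X(x,\gamma(z^{-1})(h)x)=d_X(hzx,zx)\leq B$, so conjugating a bounded-displacement element of $H$ by an element of large $\beta$ lands it in $S_B\cap H$. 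Taking $R\geq B$ this yields~(b). For~(c) one first notes that elements of $Q$ have uniformly bounded displacement (if $q=\gamma(w)(h_0)$ with $\beta(w)\geq 0$ then either $\beta(w)\geq A(R)$ and $q\in S_B\cap H$, or $d_X(x,qx)\leq 2d_X(x,wx)+d_X(x,h_0x)$ is bounded by Step~1), hence $Q\subseteq S_{R_0}$ for some $R_0=R_0(R)$; then for $q_1,q_2\in Q$ one has $d_X(x,q_1q_2x)\leq 2R_0$, and conjugating $q_1q_2$ by some $z_0$ with $\beta(z_0)$ large (only $\beta(z_0)>0$ is needed, in line with Remark~\ref{rem:equivconditions}) puts $\gamma(z_0)(q_1q_2)$ into $S_B\cap H\subseteq Q$. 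Thus $Q$ is confining under $\gamma$ with respect to $\beta$.

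\emph{Step 3 (equivalence and the dichotomy).} By Steps~1--2, $Q\cup Z_\beta\subseteq S_{R_0}$, so every element of it has bounded $S_R$--length; conversely, for $g=hz\in S_R$ one has $|\beta(z)|\leq R$, so $\|z\|_{Z_\beta}$ is bounded, and $h=gz^{-1}$ has bounded displacement, so choosing one $z'$ with $\beta(z')\leq -A(d_X(x,hx))$ of minimal possible $|\beta(z')|$ (hence bounded $\|z'\|_{Z_\beta}$), Proposition~\ref{prop:smalltranslation} gives $\gamma(z'^{-1})(h)\in S_B\cap H\subseteq Q$ and $\|h\|_{Q\cup Z_\beta}\leq 2\|z'\|_{Z_\beta}+1$, so $\|g\|_{Q\cup Z_\beta}$ is uniformly bounded. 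Hence $Q\cup Z_\beta\sim S_R$, and $\Gamma(G,Q\cup Z_\beta)$ is $G$--equivariantly quasi-isometric to $X$. Finally, since the type of a hyperbolic action is a $G$--equivariant quasi-isometry invariant, $G\acts\Gamma(G,Q\cup Z_\beta)$ is lineal (resp. quasi-parabolic) exactly when $G\acts X$ is; combined with Lemma~\ref{focal}, a quasi-parabolic $X$ forces $Q$ to be strictly confining, while a lineal $X$ forces $Q$ not strictly confining and therefore $Q=H$ by the argument in the proof of Lemma~\ref{focal}.

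\emph{Main obstacle.} The delicate point is balancing the two requirements on $Q$ in Steps~2--3: $Q$ must be large enough to be genuinely $\gamma(z)$--invariant for every $z$ with $\beta(z)\geq 0$ — which forces saturating over the half-space $\{\beta\geq 0\}$, in particular over all of $\ker(\beta|_{\Z^n})$ — yet small enough that $Q\cup Z_\beta$ stays equivalent to a bounded-displacement generating set. Reconciling these relies on Proposition~\ref{prop:smalltranslation} (conjugation by elements of large $|\beta|$ contracts displacement, uniformly) and on the quasi-line geometry of $\Gamma(\Z^n,Z_\beta)$, where every $\beta$--bounded subset has bounded diameter. Keeping the chain of constants $R\mapsto R_0\mapsto A(R_0)\mapsto\cdots$ honest, and checking that a suitable $z'$ (resp. $z_0$) of controlled $Z_\beta$--length and prescribed sign of $\beta$ always exists, is the bulk of the technical work.
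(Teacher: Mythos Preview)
Your proposal is correct and follows essentially the same approach as the paper: both use Proposition~\ref{prop:smalltranslation} to show that conjugation by elements of large $\beta$ contracts displacement, build $Q$ by saturating a displacement-ball in $H$ under $\gamma(w)$ for $\beta(w)\geq 0$, and then compare $Q\cup Z_\beta$ with the Schwarz--Milnor generating set. The only cosmetic differences are that you saturate over the full half-space $\{\beta\geq 0\}$ (making condition~(a) automatic and deferring the boundedness of $Q$ to Lemma~\ref{lem:abelianaxisdiam}), whereas the paper saturates only over $\{0\leq\beta\leq A_1\}$ (making boundedness immediate and checking~(a) separately)---these yield the same set $Q$---and you work directly in $X$ with basepoint $x=c(0)$, whereas the paper first passes to a Cayley graph $\Gamma(G,Y)$ via Claim~\ref{claim:smalltransimprovement}.
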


\begin{proof}
By the Schwarz--Milnor Lemma (see Lemma \ref{lem:MS}), we may assume without loss of generality that $X$ is a Cayley graph $\Gamma(G,Y)$. 
As described at the beginning of this section, we let $\xi$ be the fixed point of $G$ in $\partial \Gamma(G,Y)$ and $\nu$  be the other fixed point of $\Z^n$. Let $\delta$ be the constant of hyperbolicity of $\Gamma(G,Y)$. We  choose $c$ to be a $(1,20\delta)$--quasi-geodesic with $c(\infty)=\xi$ and  $c(-\infty)=\nu$. We let $x=c(0)$ and choose $r_0$ to be a constant such that any two $(1,20\delta)$--quasi-geodesics rays with the same endpoint are eventually $r_0$--Hausdorff close. 

A slight difficulty presents itself since the quasi-geodesic $c$ may not pass through the identity 1 of $G$, which is the natural basepoint of $\Gamma(G,Y)$. To fix this, we note the following slight modification of Proposition \ref{prop:smalltranslation}.
\begin{claim}
\label{claim:smalltransimprovement}
There is a function $A_0\colon \R_{\geq 0}\to \R$ and a constant $B_0>0$ (depending on $x$) such that the following holds.    For any $g\in G$ with $d_Y(1,g)\leq \constfive$ and $z\in \Z^n$ with $\beta(z)\leq -A_0(\constfive)$, we have $d_Y(gz,z)\leq |\beta(g)|+B_0$. 
\end{claim}

\begin{proof}[Proof of Claim]
Let $B$ be the constant from Proposition \ref{prop:smalltranslation}, and let $z\in \Z^n$.  Two applications of the triangle inequality yield
\[
d_Y(gz,z)\leq d_Y(gzx,zx)+2d_Y(1,x).
\]
Note that $d_Y(x,gx)\leq d_Y(1,g)+2d_Y(1,x)$. Thus if $d_Y(g,1)\leq \constfive$ we have $d_Y(gx,x)\leq \constfive+2d_Y(1,x)$. Additionally, if $z\in \Z^n$ with $\beta(z)\leq -A(\constfive+2d_Y(1,x))$ we have \[d_Y(gz,z)\leq d_Y(gzx,zx)+2d_Y(1,x)\leq |\beta(g)|+B+2d_Y(1,x).\] Taking $A_0\colon \R_{\geq 0}\to \R$ to be the function $A_0(\constfive)=A(\constfive+2d_Y(1,x))$ and $B_0$ to be the constant $B_0=B+2d_Y(1,x)$ proves the claim.
\end{proof}

We first define a subset of $H$; we will show that it is confining momentarily. Consider the ball $B_Y(1,B_0)$ of radius $B_0$ centered at the identity in $\Gamma(G,Y)$, that is, the set of elements in $G$ of word length at most $ B_0$ in the generating set $Y$. We moreover consider the intersection $B_Y(1,B_0)\cap H$.  Let $A_1=A_0(B_0)$, so that if $g\in B_Y(1,B_0)\cap H$ and $z\in\Z^n$ with $\beta(z)\leq -A_1$, then $d_Y(gz,z)\leq B_0$ (since $\beta(g) =0$). We define \[Q:=\bigcup_{\substack{z\in \Z^n \\ 0\leq \beta(z)\leq A_1}}\gamma(z)\big(B_Y(1,B_0)\cap H\big).\] That is, we take a ball in $\Gamma(G,Y)$ intersected with $H$ and close it under the set of elements of $\Z^n$ with small positive image under $\beta$ to obtain the set $Q$.  If $A_1$ happens to be negative then we take simply $Q=B_Y(1,B_0)\cap H$, and the reader may check  that the proof given below goes through with some simplifications.

Let us check that $Q$ is confining under $\gamma$ with respect to the homomorphism $\beta$. 

\begin{itemize}
\item First we check that if $\beta(z)\geq 0$ then $\gamma(z)(Q)\subseteq Q$. Let $z$ be such an element of $\Z^n$. An element of $Q$ has the form $\gamma(w)(h)$ where $h\in B_Y(1,B_0)\cap H$ and $w\in \Z^n$ with $0\leq \beta(w)\leq A_1$. We have $\beta(zw)=\beta(z)+\beta(w)\geq \beta(w)$. If $\beta(zw)\leq A_1$, then we have $\gamma(z)(\gamma(w)(h))=\gamma(zw)(h)\in Q$ by definition. Otherwise we have $\beta(zw)\geq A_1$. Hence $\beta((zw)^{-1})\leq -A_1$, and  our choice of $A_1$  
ensures that \[B_0\geq d_Y\big(h(zw)^{-1},(zw)^{-1}\big)=d_Y\big((zw)h(zw)^{-1},1\big)=d_Y\big(\gamma(zw)(h),1\big).\] Thus, $\gamma(zw)(h)\in B_Y(1,B_0)\cap H\subseteq Q$. 

\item Now let $h\in H$ be arbitrary.  We want to show that $\gamma(z)(h)\in Q$ for some $z\in \Z^n$.  Since $\beta$ is unbounded, there exists $z\in \Z^n$ satisfying $\beta(z)\geq A_0(d_Y(h,1))$, so that $\beta(z^{-1})\leq -A_0(d_Y(h,1))$. 
Hence by Claim \ref{claim:smalltransimprovement} we have \[B_0\geq d_Y\big(hz^{-1},z^{-1}\big)=d_Y\big(zhz^{-1},1\big)=d_Y\big(\gamma(z)(h),1\big).\] Thus we have $\gamma(z)(h)\in B_Y(1,B_0)\cap H\subseteq Q$.

\item Finally, we need to show that $\gamma(z)(Q\cdot Q)\subseteq Q$ for some $z\in \Z^n$. To see this, we first find a bound on the word length of elements of $Q$. An element of $Q$ has the form $\gamma(z)(h)=zhz^{-1}$ for some $z\in \Z^n$ with $0\leq \beta(z)\leq A_1$. By construction, the element $h$ has word length in $Y$ bounded by $B_0$. The element $z$ also has bounded word length. To see this, we first apply Lemma \ref{lem:abelianaxisdiam}, which shows that $d_Y\big(zc(0),c(-\beta(z))\big)\leq \consttwo$. Then, by the triangle inequality, \[d_Y\big(zc(0),c(0)\big)\leq d_Y\big(zc(0),c(-\beta(z))\big)+d\big(c(-\beta(z)),c(0)\big)\leq \consttwo+\beta(z)+20\delta\leq \consttwo+A_1+20\delta.\] Another application of the triangle inequality yields \[d_Y(z,1)\leq \consttwo+A_1+20\delta+2d_Y(x,1).\] Finally, this gives us a bound on the word length of $\gamma(z)(h)=zhz^{-1}$: \[d_Y(zhz^{-1},1)\leq B_0+2(\consttwo+A_1+20\delta+2d_Y(x,1)).\] Call this upper bound $\constthree$.

So far we have shown that $Q\subseteq B_Y(1,\constthree)$, from which it immediately follows that $Q\cdot Q\subseteq B_Y(1,2\constthree)$. 
Since there exists  $z\in \Z^n$ satisfying $\beta(z)\geq A_0(2\constthree)$,
  it follows from Claim \ref{claim:smalltransimprovement} that if 
 $h\in Q\cdot Q$, we have \[B_0 \geq d_Y(hz^{-1},z^{-1})=d_Y(zhz^{-1},1)=d_Y(\gamma(z)(h),1).\] That is, $\gamma(z)(Q\cdot Q)\subseteq B_Y(1,B_0) \cap H \subseteq Q$.
\end{itemize}

Now that we have constructed $Q$, we show that $\Gamma(G,Q\cup Z_\beta)$ is quasi-isometric to $\Gamma(G,Y)$, where the constant $C_\beta$ and the set $Z_\beta$ are chosen as in \eqref{eqn:Zrho}. This will complete the proof. To do this, we show that every element of $Q\cup Z_\beta$ has bounded word length with respect to the generating set $Y$ and vice versa. 

First we show that every element of $Q\cup Z_\beta$ has bounded word length in $Y$. We have already shown that $Q\subseteq B_Y(1,\constthree)$, so it remains to be shown that every element of $Z_\beta$ has bounded length in $Y$. For $z\in Z_\beta$ it follows exactly as in the third bullet point above that \[d_Y(z,1)\leq E+|\beta(z)|+20\delta+2d_Y(x,1)\leq E+C_\beta+20\delta+2d_Y(x,1).\]
Since this last quantity is independent of $z$, we have shown that every element of $Q\cup Z_\beta$ has bounded word length with respect to $Y$, as desired.

We now show that every element of $Y$ has bounded word length with respect to $Q\cup Z_\beta$. Consider an element $hz\in Y$ where $h\in H$ and $z\in \Z^n$. We will bound the word lengths of $h$ and $z$ with respect to $Q\cup Z_\beta$ separately. 

First we bound the word length of $z$ with respect to $Q\cup Z_\beta$. Note that we have $\beta(z)=\beta(hz)$. Moreover, by the definition of $\beta$, we have \[|\beta(hz)|\leq d_Y(x,hzx)\leq 2d_Y(1,x)+d_Y(1,hz)\leq 2d_Y(1,x)+1.\] Call this upper bound $\constfour$ so that  $|\beta(z)|\leq \constfour$. 
This allows us to bound the word length of $z$. We have $\beta(t_i)\neq 0$ for some $i$. Without loss of generality assume $\beta(t_1)\neq 0$. Then we may choose $n$ with \[|\beta(t_1^n)-\beta(z)|=|n\beta(t_1)-\beta(z)|\leq |\beta(t_1)|\leq C_\beta.\] Therefore $t_1^nz^{-1}\in Z_\beta$.  Moreover, $|n|$ is bounded by $\frac{\constfour}{|\beta(t_1)|}+1$ since $|\beta(z)|\leq \constfour$, and this proves that $z$ has word length at most \[|n|+1\leq \frac{\constfour}{|\beta(t_1)|}+2\] with respect to $Z_\beta$.

Now we bound the word length of $h$ with respect to $Q\cup Z_\beta$. We will first bound the word length of $h$ with respect to $Y$. Recall that we have already shown $|\beta(z)|\leq \constfour$, and thus another calculation identical to that of the third bullet point above yields \[d_Y(1,z)\leq E+|\beta(z)|+20\delta+2d_Y(x,1)\leq E+L+20\delta+2d_Y(x,1).\]
From this and the fact that $hz\in Y$, it follows that \[d_Y(1,h)\leq d_Y(1,hz)+d_Y(hz,h)\leq 1+ 2d_Y(x,1)+\consttwo+\constfour+20\delta.\]
 Denote by $M$ this upper bound on $d_Y(1,h)$.  By Claim \ref{claim:smalltransimprovement}, if $g \in B_Y(1, M)\cap H$ and $w\in \Z^n$ with $\beta(w)\leq -A_0(M)$, then $d_Y(gw,w)\leq B_0$.  In particular, we have $\beta(t_1^n)=n\beta(t_1)\leq -A_0(M)$ for some $n$ with $|n|\leq \frac{A_0(M)}{|\beta(t_1)|}+1$. For this value of $n$ we have $\gamma(t_1^n)(h)=t_1^nht_1^{-n}\in Q$. Therefore the word length of $h$ with respect to $Q\cup Z_\beta$ is at most \[2|n|+1\leq 2\frac{A_0(M)}{|\beta(t_1)|}+3.\] We have shown that every element of $Y$ has bounded word length with respect to $Q\cup Z_\beta$. We conclude that $\Gamma(G,Q\cup Z_\beta)$ is quasi-isometric to $\Gamma(G,Y)$ as desired.

It remains to show that $Q$ is \emph{strictly} confining exactly when $G\curvearrowright \Gamma(G,Y)$ is quasi-parabolic.  Since $\Gamma(G,Y)$ is quasi-isometric to $\Gamma(G,Q\cup Z_\beta)$, it suffices to show that $Q$ is strictly confining exactly when $G\curvearrowright \Gamma(G,Q\cup Z_\beta)$ is quasi-parabolic.  

If $Q$ is strictly confining, then it follows from Lemma \ref{focal} that $G \acts \Ga(G, Q \cup Z_\beta)$ is quasi-parabolic.  Conversely, suppose that $G \acts \Ga(G,Y)$ is quasi-parabolic and, towards a  contradiction, that $Q$ is confining but not strictly confining.  That is, suppose that for every $z \in \Z^n$ with $\rho(z) \geq 0$, we have that $\gamma(z)(Q) = Q$. It then follows that $Q = \gamma(z^{-1})(Q)$ for any such $z$, as well. By Definition \ref{def:confining}(b), we see that it thus follows that $Q =H$.  But since $\Gamma(\Z^n , Z_\beta)$ is a quasi-line, it follows that $\Gamma(G, Q \cup Z_\beta) = \Gamma(G, H \cup Z_\beta)$ is also a quasi-line, which contradicts the assumption that $G \acts \Ga(G,Y)$ is quasi-parabolic. Hence $Q$ must be strictly confining.
\end{proof}

We now introduce an equivalence relation on homomorphisms from $\Z^n$ to $\R$. 
\begin{defn}
We say two homomorphisms $\rho,\rho'\colon \Z^n\to \R$ are \emph{equivalent}, and write $\rho\sim\rho'$, if there exists a constant $j\in \R_{> 0}$ such that $\rho(z)=j\rho'(z)$ for all $z\in \Z^n$. 
\end{defn}
 As shown in Lemma \ref{lem:multiplekernel}, requiring that $j$ is positive  ensures that $\rho$ and $\rho'$ have not only  the same kernel, but also the same half space with positive image.  The following lemma shows that  a subset of $H$ is strictly confining with respect to at most one equivalence class of homomorphisms.   
 
 \begin{lem}\label{lem:notsim}
Suppose $Q\subsetneq H$ is strictly confining under $\gamma$ with respect to $\rho$. If $\rho'\sim\rho$, then $Q$ is strictly confining with respect to $\rho'$. Moreover, $Q$ is not confining under $\gamma$ with respect to $\rho''$ for any $\rho''\not\sim\rho$. 
 \end{lem}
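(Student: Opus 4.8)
The first assertion is essentially immediate from the definitions. If $Q$ is strictly confining under $\gamma$ with respect to $\rho$ and $\rho' = j\rho$ for some $j > 0$, then for any $z \in \Z^n$ we have $\rho'(z) \geq 0$ if and only if $\rho(z) \geq 0$, so condition (a) of Definition \ref{def:confining} holds for $\rho'$ because it holds for $\rho$. Conditions (b) and (c) make no reference to $\rho$ at all, so they are unchanged. Finally, the element $z$ witnessing strict confinement (which we may take with $\rho(z) > 0$ by Remark \ref{rem:equivconditions}) also satisfies $\rho'(z) > 0$, so $Q$ is strictly confining with respect to $\rho'$.

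For the second assertion, I would argue by contradiction: suppose $Q$ is confining under $\gamma$ with respect to some $\rho''$ with $\rho'' \not\sim \rho$. First I would rule out the case that $\rho'' = j\rho$ for some $j < 0$. In that case, Definition \ref{def:confining}(a) applied to $\rho''$ says $\gamma(z)(Q) \subseteq Q$ whenever $\rho''(z) \geq 0$, i.e.\ whenever $\rho(z) \leq 0$; combined with the same condition for $\rho$ (which gives $\gamma(z)(Q)\subseteq Q$ when $\rho(z)\geq 0$), we would get $\gamma(z)(Q) \subseteq Q$ for \emph{all} $z \in \Z^n$, hence $\gamma(z)(Q) = Q$ for all $z$ (applying the containment to both $z$ and $z^{-1}$). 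This contradicts strict confinement of $Q$ with respect to $\rho$. So we may assume $\rho''$ is not a scalar multiple of $\rho$ at all.

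Now, since $\rho$ and $\rho''$ are non-proportional homomorphisms $\Z^n \to \R$, by the argument in the proof of Lemma \ref{lem:multiplekernel} (viewing them as $z \mapsto v\cdot z$ and $z\mapsto v''\cdot z$ for non-proportional $v, v'' \in \R^n$), there is an integer vector $z_1 \in \Z^n$ with $\rho(z_1) > 0$ but $\rho''(z_1) < 0$. I would then derive a contradiction by iterating. Applying Definition \ref{def:confining}(a) for $\rho$ to $z_1$ gives $\gamma(z_1)(Q) \subseteq Q$. Applying Definition \ref{def:confining}(a) for $\rho''$ to $z_1^{-1}$ (which has $\rho''(z_1^{-1}) > 0$) gives $\gamma(z_1^{-1})(Q) \subseteq Q$, hence $Q \subseteq \gamma(z_1)(Q)$. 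Combining, $\gamma(z_1)(Q) = Q$. The key remaining step is to promote this to a contradiction with strict confinement; the natural route is to show that strict confinement of $Q$ with respect to $\rho$ combined with $\gamma(z_1)(Q) = Q$ for an element with $\rho(z_1) > 0$ is impossible. For this I would invoke (or re-run) the argument from the proof of Lemma \ref{focal}: if $Q$ is strictly confining with respect to $\rho$, one shows $H$ has unbounded orbits in $G \acts \Gamma(G, Q \cup Z_\rho)$, in fact that elements of $\gamma(z^{-i})(Q)\setminus \gamma(z^{-i+1})(Q)$ have word length growing linearly in $i$, where $z$ witnesses strict confinement; but $\gamma(z_1)(Q) = Q$ with $\rho(z_1) > 0$ forces, by a short computation using that $\rho$ and the strict-confinement witness $z$ are proportional (both being $\rho$-positive), that $\gamma(z^{-i})(Q) = Q$ for all $i$ as well — indeed any two $\rho$-positive elements have some common power up to $\rho$-kernel — contradicting the strict ascending chain. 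Alternatively, and more cleanly, I expect one can argue directly: strict confinement with respect to $\rho$ gives some $z$ with $\rho(z) > 0$ and $\gamma(z)(Q) \subsetneq Q$; since $\rho(z_1) > 0$ too, pick positive integers $a,b$ with $\rho(z^a) = \rho(z_1^b)$ (possible as both values are positive reals — actually one needs $\rho(z^a)$ and $\rho(z_1^b)$ comparable, so choose them so that $w := z^a z_1^{-b}$ has $\rho(w) = 0$, hence by Definition \ref{def:confining}(a) applied to $w$ and $w^{-1}$, $\gamma(w)(Q) = Q$); then $\gamma(z^a)(Q) = \gamma(z_1^b)(Q) = Q$ since $\gamma(z_1)(Q) = Q$, contradicting $\gamma(z^a)(Q) \subseteq \gamma(z)(Q) \subsetneq Q$.

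The main obstacle is the last step: carefully producing, from the two $\rho$-positive elements $z$ (the strict-confinement witness) and $z_1$ (the non-proportionality witness), a common "multiple" on which $\gamma$ acts trivially on $Q$, so that the strict containment $\gamma(z)(Q) \subsetneq Q$ is contradicted. This requires knowing that $\{z \in \Z^n : \rho(z) = 0\}$ is exactly the set on which $\gamma(z)$ fixes $Q$ setwise — which follows from Definition \ref{def:confining}(a) applied to both $z$ and $z^{-1}$ — and then a small rational-approximation/divisibility argument inside $\Z^n / \ker(\rho|_{\Z^n})$ to find $a, b$ with $\rho(z^a) = \rho(z_1^b)$; since $\rho$ may have irrational "slope" one instead chooses $a,b$ so that $z^a z_1^{-b} \in \ker(\rho)$ directly, which is possible because $\ker(\rho)\cap\Z^n$ together with $z_1$ spans a finite-index subgroup containing a power of $z$ — this is where I would be most careful, possibly replacing the exact-equality approach with the linear-growth estimate from Lemma \ref{focal} if the divisibility bookkeeping becomes awkward.
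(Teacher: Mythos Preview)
Your first assertion and the reduction of the second to ``find $z_1$ with $\rho(z_1)>0$ and $\rho''(z_1)<0$, deduce $\gamma(z_1)(Q)=Q$, and contradict strict confinement'' are correct and match the paper's strategy; the paper phrases it as showing that $Q$ determines the half-space $\{\rho>0\}$ via the characterization $\gamma(z)(Q)\subsetneq Q\iff\rho(z)>0$, but the core step is the same.

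The gap is in your final step. You want positive integers $a,b$ with $z^a z_1^{-b}\in\ker\rho$, i.e.\ $a\rho(z)=b\rho(z_1)$, but this need not exist: take $n=2$, $\rho(t_1)=1$, $\rho(t_2)=\sqrt2$, $z=t_2$, $z_1=t_1$. Then $\ker\rho\cap\Z^2=\{0\}$ and no nonzero power of $z$ lies in $\langle z_1\rangle$. Your fallback claim that $\ker(\rho)\cap\Z^n$ together with $z_1$ spans a finite-index subgroup is false in exactly this example, where the span has rank one.

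The fix---and this is precisely the paper's argument---is not to seek equality but only an inequality: choose $m$ with $\rho(z_1^m)\geq\rho(z)$, so that $w:=z_1^m z^{-1}$ has $\rho(w)\geq 0$. Since $\gamma(w)$ is a bijection, applying it to the strict inclusion $\gamma(z)(Q)\subsetneq Q$ gives
\[
\gamma(z_1^m)(Q)=\gamma(w)\bigl(\gamma(z)(Q)\bigr)\subsetneq\gamma(w)(Q)\subseteq Q,
\]
contradicting $\gamma(z_1^m)(Q)=Q$. No rationality or divisibility is needed. Your alternative via the word-length estimates of Lemma~\ref{focal} would also work but is considerably heavier than this one-line power argument.
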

 
 \begin{proof}
By Lemma \ref{lem:multiplekernel}, $\rho$ is completely determined up to scaling by the kernel of its extension to $\R^n$, which we (by an abuse of notation) denote $\ker\rho$.  We will show that $\ker\rho$ is completely determined by the strictly confining subset $Q$.  This will prove both statements.

 The kernel $\ker\rho$ is a linear codimension-one subspace of $\R^n$ that divides $\R^n\setminus \ker\rho$ into two half spaces, $H_1$ and $H_2$, in the following way: for any $z \in \Z^n$, $z\in H_1$ if and only if $\rho(z)>0$,  and $z\in H_2$ if and only if $\rho(z)<0$.
 
We claim that for any $z\in\Z^n$, we have
$\gamma(z)(Q)\subsetneq Q$ if and only if $z\in H_1$, and $\gamma(z)(Q)\supsetneq Q$ if and only if $z\in H_2$.
We will show that the first statement holds; the proof of the second statement is similar.
Since $Q$ is strictly confining, there exists some $w\in \Z^n$ such that $\gamma(w)(Q)\subsetneq Q$.  Suppose $z\in H_1$, so that  $\rho(z)>0$.  If $\rho(z)\geq \rho(w)$, then by writing $z=(zw^{-1})w$, we have 
\[\gamma(z)(Q)=\gamma(w)\gamma(zw^{-1})(Q)\subseteq\gamma(w)(Q)\subsetneq Q.\]
  On the other hand, if $0<\rho(z)<\rho(w)$, then $\rho(z^m)>\rho(w)$ for some $m$,  and then by the argument above, we have 
\[\gamma(z)^m(Q)=\gamma(z^m)(Q)\subsetneq Q.\] 
But if $\gamma(z)(Q)=Q$, then  $\gamma(z)^m(Q)= Q$, which is a contradiction.  Therefore $\gamma(z)(Q)\subsetneq Q$.

Conversely, suppose $\gamma(z)(Q)\subsetneq Q$ for some $z\in \Z^n$.  If $z\notin H_1$, then $\rho(z)\leq 0$, and so $\rho(z^{-1})\geq 0$.  It follows that $\gamma(z^{-1})(Q)\subseteq Q$.  Thus  $Q=\gamma(zz^{-1})(Q)=\gamma(z)\gamma(z^{-1})(Q)\subseteq \gamma(z)(Q)\subsetneq Q$, which is a contradiction.
 \end{proof}

%%%%%%%%%%%%%%%%%%%%%%%%%%%%%%%%%%%%%%%%%%%%%%%%
%%%%%%%%%%%%%%%%%%%%%%%%%%%%%%%%%%%%%%%%%%%%%%%%
\section{An example: $\Z[\frac1k]\rtimes \Z^n$}
Fix a natural number $k\geq 2$ which is not a power of a prime number. Let $k=p_1^{m_1}\cdots p_n^{m_n}$ be the prime factorization of $k$.  Let  $G_k=\Z[\frac1k]\rtimes_\gamma \Z^n$, where the homomorphism $\gamma\colon \Z^n \to \operatorname{Aut}(\Z[\frac{1}{k}])$ is defined as follows. Let $t_1,\ldots,t_n$ be a basis for $\Z^n$ as a free abelian group. For any $h\in\Z[\frac{1}{k}]$, we define $\gamma(t_i)(h)=t_iht_i^{-1}$ to be equal to $p_i^{m_i} h$ as an element of $\Z[\frac{1}{k}]$, so that $\gamma(t_i)$ acts on $\Z[\frac1k]$ as multiplication by  $p_i^{m_i}$.  Thus $G_k$ is the group with presentation \[G_k = \left\langle a, t_1,\ldots,t_n \mid [t_i,t_j]=1, \ t_iat_i^{-1}=a^{p_i^{m_i}} \text{ for all } i,j\right\rangle\] and is a higher-dimensional analog of a solvable Baumslag-Solitar group. Here $a$ corresponds to the normal generator 1 of $\Z[\frac1k]$. The relation $t_1at_1^{-1}=a^{p_1^{m_1}}$ implies that for any pseudocharacter $\beta\colon G_k\to \R$, \[\beta(a)=\beta(t_1at_1^{-1})=p_1^{m_1}\beta(a),\] and thus $\beta(a)=0$. Therefore any pseudocharacter vanishes on  $\Z[\frac1k]$; this applies in particular to any Busemann pseudocharacter.  Moreover, as before, $\beta$ turns out to be a homomorphism.

In addition to the standard representation of elements of $\Z[\frac1k]$ as Laurent polynomials in $k$, we also represent elements of $\Z[\frac1k]$ by their base $k$ expansions.  For example,  $\frac1k=0.1$ while $k+\frac1k+\frac{1}{k^4}=10.1001$. In general for $c\in \Z[\frac1k]$ we may write \[c=\pm c_r\ldots c_0.c_{-1}\ldots c_{-s}\] where each digit $c_i\in \{0,\ldots,k-1\}$. We will switch between these representations interchangeably. We note that in the base $k$ representation, multiplying and dividing by $k$ shifts the decimal point one place to the right and left, respectively.

The goal of this section is to prove Theorem \ref{thm:Z1k}, which we restate for the convenience of the reader.
 
\Gk*

 Our strategy for classifying the quasi-parabolic structures reduces the problem to the classification of quasi-parabolic structures of the solvable Baumslag-Solitar group $BS(1,k)$, which was given by the first and third authors in \cite{AR}. The Baumslag-Solitar group $BS(1,k)$ is the group with presentation $BS(1,k) = \langle a,t \mid tat^{-1}=a^k\rangle$.
It is isomorphic to the semi-direct product $\Z[\frac{1}{k}]\rtimes_\alpha \Z$, where the automorphism $\alpha$ acts as multiplication by $k$. We will identify $BS(1,k)$ with this semi-direct product in all that follows.

The following theorem from \cite{AR} classifies the subsets of $\Z[\frac{1}{k}]$ which are confining under the action of $\alpha$. Let $s$ generate the factor $\Z$ so that $BS(1,k)\simeq \Z[\frac{1}{k}]\rtimes_\alpha \langle s\rangle$.  We say a divisor $l$ of $k=p_1^{m_1}\ldots p_n^{m_n}$ is \emph{full} if $p_i^{m_i}$ divides $l$ whenever $p_i$ divides $l$. For example, the full divisors of 12 are 1, 4, 3, and 12.

\begin{prop}[{\cite[Theorem~1.1~\&~Proposition~3.12]{AR}}]\label{thm:BS1k}
If $C\subseteq\Z[\frac{1}{k}]$ is confining under the action of $\alpha$, then there is a full divisor $l$ of $k$ such that 
\[
[C\cup \{s^{\pm 1}\}]= \left[\Z\left[\frac{1}{l}\right]\cup \{s^{\pm 1}\}\right]\]
as elements of $ \mathcal H_{qp}(BS(1,k))$.
If $C\subseteq\Z[\frac{1}{k}]$ is confining under the action of $\alpha^{-1}$, then either $[C\cup\{s^{\pm 1}\}]=[\Z[\frac{1}{k}]\cup\{s^{\pm 1}\}]$ or $[C\cup\{s^{\pm 1}\}]=[C_-\cup\{s^{\pm 1}\}]$ as  elements of $ \mathcal H_{qp}(BS(1,k))$, where 
\[
C_-=\left\{c\in \Z\left[\frac{1}{k}\right] \Big\vert \ c=\pm 0.c_{-1}c_{-2}\ldots c_{-m} \textrm{ for some $m\in \mathbb N$}\right\}. 
\]
\end{prop}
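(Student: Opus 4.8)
The plan is to work additively in $H=\Z[\tfrac1k]$, using that $s$ conjugates by $\alpha$, so that $sas^{-1}=ka$ and $s^{-1}as=a/k$. The engine of the whole argument is a two-sided estimate for the word norm $\|\cdot\|_{C\cup\{s^{\pm1}\}}$. For the upper bound, condition (b) of Definition~\ref{genconfine} lets me write any $c\in H$ as $c=s^{-m}(k^mc)s^m$ with $k^mc\in C$ in the $\alpha$ case (and symmetrically $c=s^{m}(c/k^m)s^{-m}$ with $c/k^m\in C$ in the $\alpha^{-1}$ case), so that $\|c\|_{C\cup\{s^{\pm1}\}}\le 2m+1$ as soon as $m$ is large enough to pull $c$ into $C$; conditions (a) and (c) then control the minimal such $m$ under the ring operations. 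For the lower bounds, which are what pin down the invariant, I would combine the homomorphism $BS(1,k)\to\Z$ recording the exponent of $s$ with, for each prime $p_i$, a $p_i$-adic depth functional, using condition (c) to bound how much depth a single generator in $C$ can contribute. Making these estimates precise reduces everything to a finite combinatorial comparison, and I would recall that the order on generating sets is inclusion-reversing (Definition~\ref{def-GG}).

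For the case of $\alpha$ I would isolate the invariant as the set of primes $S(C)=\{p_i : \sup_N\|p_i^{-N}\|_{C\cup\{s^{\pm1}\}}<\infty\}$ and set $l=\prod_{p_i\in S(C)}p_i^{m_i}$. The two inclusions $[C\cup\{s^{\pm1}\}]\preccurlyeq[\Z[\tfrac1l]\cup\{s^{\pm1}\}]$ and $\succcurlyeq$ then follow from the estimate above. Concretely, an element of $\Z[\tfrac1l]$ is a sum of an integer and finitely many terms $p_i^{-N}$ with $p_i\mid l$; each such term has uniformly bounded norm in $C\cup\{s^{\pm1}\}$ by the definition of $S(C)$, and there are only finitely many primes, so condition (c) lets me assemble the sum with a bounded increase in norm. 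Conversely, replacing each $c\in C$ by $k^{m}c$ via (a) and decomposing it via (c), one shows that the primes occurring to unbounded depth in $C$ are exactly those in $S(C)$, so every element of $C$ lies within bounded $\Z[\tfrac1l]\cup\{s^{\pm1}\}$-distance of the identity. Strictness of the confinement corresponds to $l\ne k$, and hence by Proposition~\ref{niceprop} to the quasi-parabolic rather than lineal case.

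Fullness of $l$ is built into the construction, since inverting $p_i$ already inverts $p_i^{m_i}$, so $\Z[\tfrac1l]$ depends only on the prime support $S(C)$; the content is not the fullness but the claim that $S(C)$ is a complete invariant. I expect \emph{this} to be the main obstacle: proving that admitting a single prime $p_i$ to unbounded denominator-depth forces all of $\Z[\tfrac1{p_i^{m_i}}]$ — including the intermediate powers $p_i^{-j}$ with $j$ not a multiple of $m_i$, and all residues — to sit at bounded distance. This requires carefully chaining condition (c) (near-closure under addition), condition (a) (closure under $\alpha$), and condition (b) (the covering property) to produce genuinely uniform constants, and the base-$k$ carrying in the additive closure is the delicate point at which the estimates could otherwise degrade.

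For the case of $\alpha^{-1}$ the confining automorphism is division by $k$, which lowers every $p_i$-adic valuation simultaneously, so no prime can be singled out and the classification collapses to a dichotomy. Here condition (a) makes $C$ closed under division by $k$ and (b) says every element divides into $C$. I would split on whether the absolute values of elements of $C$ are coarsely bounded. If they are bounded by some $R$, then closure under division together with (c) shows that $C$ lies within bounded distance of $C_-$ and contains a bounded-distance copy of it, giving $[C\cup\{s^{\pm1}\}]=[C_-\cup\{s^{\pm1}\}]$, a strictly confining and hence quasi-parabolic structure. If instead $C$ contains elements of unbounded absolute value, then (c) together with closure under division propagates these to show that every element of $\Z[\tfrac1k]$ has bounded norm, so $C\sim\Z[\tfrac1k]$, the non-strict and hence lineal case. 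The only subtlety is again uniformity of the constants in the unbounded alternative, where repeated use of (c) must be kept from accumulating error; but this argument is strictly easier than the $\alpha$ case, as there is no prime-by-prime refinement to track.
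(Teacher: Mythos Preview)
This proposition is not proved in the paper at all: it is quoted verbatim from \cite[Theorem~1.1~\&~Proposition~3.12]{AR} and used as a black box in the classification of confining subsets for $G_k$ (Lemmas~\ref{lem:rho>0} and~\ref{lem:rho<0}). There is therefore no proof here to compare your proposal against.

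Your sketch is a plausible outline of how one might argue, and the overall shape---isolating a prime-support invariant in the $\alpha$ case and splitting on archimedean boundedness in the $\alpha^{-1}$ case---is sound. But as written it remains a plan rather than a proof: the steps you flag as ``the main obstacle'' and ``the only subtlety'' (uniformity when chaining condition~(c), and showing that unbounded $p_i$-depth forces all of $\Z[1/p_i^{m_i}]$ to sit at bounded distance) are precisely where the work lies, and you have not carried them out. If you want to supply a self-contained argument rather than cite \cite{AR}, those are the places where you must produce explicit constants and verify they do not degrade under iteration.
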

 
\noindent Stated another way, $C_-$ is the unit ball of $\R$ intersected with $\Z[\frac1k]$.
 
 \subsection{Quasi-parabolic structures}
 \label{section:QPofGk}
In this subsection we describe the quasi-parabolic structures of $G_k$.  In particular, we prove the following proposition.
 \begin{prop}\label{prop:qp}
$\mathcal H_{qp}(G_k)$ consists of exactly $n+1$  incomparable structures.
\end{prop}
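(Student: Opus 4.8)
The plan is to apply the correspondence of Theorems~\ref{thm:main} and~\ref{prop:main}. Since the relation $t_1at_1^{-1}=a^{p_1^{m_1}}$ forces $\beta(a)=0$ for \emph{every} pseudocharacter $\beta$ of $G_k$, every quasi-parabolic action of $G_k$ has $\beta(\Z[\frac1k])=0$, so Theorem~\ref{prop:main} applies to all of them; combined with Theorem~\ref{thm:main}, the set $\Hl_{qp}(G_k)$ is in bijection with the collection of subsets $Q\subseteq\Z[\frac1k]$ that are strictly confining under $\gamma$ with respect to some homomorphism $\beta\colon\Z^n\to\R$, taken up to the equivalence $[Q\cup Z_\beta]=[Q'\cup Z_{\beta'}]$ in $\Hl(G_k)$. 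By Lemma~\ref{lem:notsim} the homomorphism $\beta$ is determined by $Q$ up to positive scaling, so it suffices to enumerate strictly confining subsets up to equivalence of the associated generating sets.

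First I would exhibit the $n+1$ structures. For $i=1,\dots,n$ put $Q_i=\Z[\frac1{k/p_i^{m_i}}]=\{h:v_{p_i}(h)\ge0\}$ with $\beta_i$ the $i$th coordinate projection, and put $C_-=\{h:|h|\le1\}$ with $\beta_0(z)=-\sum_i m_iz_i\log p_i$. A direct check of conditions (a)--(c) of Definition~\ref{def:confining} (using that $Q_i$ is a subring invariant under multiplication by $p_j^{\pm m_j}$ for $j\neq i$ and strictly shrunk by multiplication by $p_i^{m_i}$, and that $C_-$ behaves like a unit ball) shows each $Q_i$ is strictly confining with respect to $\beta_i$ and $C_-$ is strictly confining with respect to $\beta_0$; Theorem~\ref{thm:main} then gives $n+1$ quasi-parabolic structures, which one identifies with the actions on the Bass--Serre trees $T_i$ of the ascending HNN decompositions $G_k=\langle a,t_j:j\neq i\rangle\ast_{t_i}$ and on $\H^2$. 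The homomorphisms $\beta_1,\dots,\beta_n,\beta_0$ are pairwise non-proportional: the $\beta_i$ visibly are, and $\beta_0$ has every coordinate nonzero, so for $n>1$ it is proportional to no $\beta_i$ (the $\Q$-linear independence of $\{\log p_i\}$ enters only to record that $\ker\beta_0\cap\Z^n=0$, which will be used for incomparability). Since the Busemann pseudocharacter of a quasi-parabolic structure is an invariant up to positive scaling, these give $n+1$ distinct structures.

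The substance is that there are no others. Given $Q$ strictly confining with respect to $\beta$, I would first show that the submonoid $P=\{z\in\Z^n:\gamma(z)(Q)\subseteq Q\}$ equals \emph{exactly} the half-space $\{z:\beta(z)\ge0\}$: condition (a) gives $\supseteq$, and if some $z\in P$ had $\beta(z)<0$ then $-z\in P$ as well, whence $\gamma(z)(Q)=Q$; choosing a strict element $z^*$ with $\beta(z^*)>0$ and examining $z^*+Nz$ for large $N$ gives $Q\subseteq\gamma(z^*+Nz)(Q)=\gamma(z^*)(Q)\subsetneq Q$, a contradiction. In particular every $z$ with $\beta(z)>0$ strictly shrinks $Q$. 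Now if all coordinates $\beta(t_i)$ are $\le0$ (``Case II'', which includes $\beta_0$) I would take $z^*=(-1,\dots,-1)\in P$, and otherwise (``Case I'') a $z^*\in P$ with all coordinates positive and $\beta(z^*)>0$ (available since some $\beta(t_i)>0$). In either case $\Z[\frac1k]\rtimes\langle z^*\rangle\cong BS(1,K_*)$ for the positive integer $K_*=\prod p_i^{m_i|z_i^*|}$, which has full prime support, so $\Z[\frac1k]=\Z[\frac1{K_*}]$, with $z^*$ corresponding to $s^{\pm1}$; and conditions (a)--(c) for $\gamma$ transfer (for (b), pull $h$ into $Q$ by some $\gamma(z)$, then push by a high power of $z^*$) to show $Q$ is confining under $\gamma(z^*)$, i.e.\ under $\alpha$ in Case I and under $\alpha^{-1}$ in Case II. Proposition~\ref{thm:BS1k} then yields a full divisor $l$ of $K_*$ with $[Q\cup\{z^{*\pm1}\}]=[\Z[\frac1l]\cup\{z^{*\pm1}\}]$ as generating sets of $BS(1,K_*)$; since $\Z[\frac1l]\cup\{z^{*\pm1}\}$ generates $BS(1,K_*)$ inside $G_k$ (so word lengths of its elements agree in the two groups) and $z^{*\pm1}$ is boundedly generated by $Z_\beta$, this transfers to $[Q\cup Z_\beta]=[\Z[\frac1l]\cup Z_\beta]$ in $\Hl(G_k)$. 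In Case II the only options are $[\Z[\frac1k]\cup Z_\beta]$, which is lineal (because $\Z[\frac1k]$ is a bounded subset there, so the graph collapses to the quasi-line $\Ga(\Z^n,Z_\beta)$) and hence excluded, and $[C_-\cup Z_\beta]$, giving the $\H^2$ structure; so Case II is completely resolved. In Case I, writing $\Z[\frac1l]=\Z[\frac1{p_i}:i\in S]$, again $S=\{1,\dots,n\}$ is excluded as lineal, and it remains to prove $|S^c|=1$, say $S^c=\{j_0\}$, together with $\beta\sim\beta_{j_0}$, which identifies $[Q\cup Z_\beta]$ with $[Q_{j_0}\cup Z_{\beta_{j_0}}]$.

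This last step is the crux, and where the higher-rank geometry genuinely enters. The plan is to apply Theorem~\ref{prop:main} to the quasi-parabolic action $G_k\acts\Ga(G_k,\Z[\frac1l]\cup Z_\beta)$ to recover a canonical strictly confining subset $Q^\dagger$ with respect to a homomorphism $\beta^\dagger\sim\beta$, and to exploit that $Q^\dagger$ is built in that proof as the intersection of a ball with $\Z[\frac1k]$: thus $Q^\dagger$ \emph{contains} the bounded subgroup $\Z[\frac1l]$ and is \emph{contained} in some $B(1,M)\cap\Z[\frac1k]$, which — after first absorbing into $S$ every prime $p_j$ with $\beta(t_j)=0$, whose inversion is metrically free in $\Ga(G_k,\Z[\frac1l]\cup Z_\beta)$ — has $v_{p_j}$ bounded below for every $j\in S^c$. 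Condition (a) for $Q^\dagger$ then forces $\{\beta^\dagger\ge0\}\subseteq\bigcap_{j\in S^c}\{z_j\ge0\}$; since the left-hand side is a half-space while an intersection of two or more distinct coordinate half-spaces is a proper wedge, this is only possible when $|S^c|=1$, and then $\{\beta^\dagger\ge0\}=\{z_{j_0}\ge0\}$, i.e.\ $\beta\sim\beta^\dagger\sim\beta_{j_0}$, as desired (the same argument re-derives $\beta\sim\beta_0$ in Case II). Finally, for incomparability: if one quasi-parabolic structure dominates another then every loxodromic of the dominated action is loxodromic for the dominating one, so the dominating structure's Busemann kernel $\ker\beta\cap\Z^n$ is contained in the dominated one's; this rules out every ordered pair among the $n+1$ except ``$\H^2$ dominates $T_i$'', which is excluded by the distortion estimate that the integers $p_i^{m_iN}\in Q_i$ have linearly growing word length in $\Ga(G_k,C_-\cup Z_{\beta_0})$. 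I expect the main technical burden to lie in this crux step — making ``$Q^\dagger$ is coarsely a subgroup'' and ``free inversion of a $\beta$-invisible prime'' precise, and passing cleanly between equivalences of generating sets of $BS(1,K_*)$ and of $G_k$.
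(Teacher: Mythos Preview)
Your overall strategy---use Theorems~\ref{thm:main} and~\ref{prop:main} to reduce to classifying strictly confining subsets, then reduce to the $BS(1,k)$ case via Proposition~\ref{thm:BS1k}---is essentially the paper's. The paper exhibits the $n+1$ structures as you do (Lemmas~\ref{lem:Q_i} and~\ref{lem:Q_-}) and then classifies confining subsets by a trichotomy on the sign of $\rho(t_1\cdots t_n)$ (Lemmas~\ref{lem:rho=0}, \ref{lem:rho>0}, \ref{lem:rho<0}), which differs only superficially from your Case~I/II split.

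There is, however, a genuine gap in your crux step. You claim that, after absorbing primes $p_j$ with $\beta(t_j)=0$ into $S$, the ball $B(1,M)\cap\Z[\tfrac1k]$ in $\Gamma(G_k,\Z[\tfrac1l]\cup Z_\beta)$ has $v_{p_j}$ bounded below for every $j\in S^c$. This is false in general: if $j_1,j_2\in S^c$, then for suitable integers $a,b$ the element $z=a\,e_{j_1}+b\,e_{j_2}$ lies in $Z_\beta$ while $a$ is arbitrarily negative, so $\gamma(z)(1)\in B(1,3)$ has $v_{p_{j_1}}=m_{j_1}a$ unbounded below. The half-space inclusion you derive therefore does not follow, and your argument does not separately dispose of the case $\rho(t_1\cdots t_n)=0$ (which your Case~I can contain). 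The paper avoids this by working directly with $Q$ rather than $Q^\dagger$: from the $BS(1,k)$ equivalence one has $\alpha^N(Q)\subseteq\Z[\tfrac1l]$, and since $Q$ is closed under $\gamma(t_i^f t_j^{-1})$ whenever $\rho(t_i^f t_j^{-1})>0$, one gets $k^N(p_i^{fm_i}/p_j^{m_j})^r x\in\Z[\tfrac1l]$ for all $r\geq 0$ and any nonzero $x\in Q$, forcing $p_j\mid l$ for every $j\neq i$. This pins down $l\in\{k_i,k\}$ and, with a short further argument, forces $\rho\sim\rho_i^+$.

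There is also a direction error in your incomparability argument. Your kernel-containment observation correctly leaves only the possibility that $[S_-]$ dominates $[S_i]$; to rule this out you must exhibit elements of $S_-=Q_-\cup Z_{\beta_0}$ with unbounded $S_i$-length. Your proposed witnesses $p_i^{m_iN}$ lie in $Q_i\subseteq S_i$ and have large $S_-$-length, which shows the \emph{opposite} non-domination (already handled by kernels). The paper instead notes that in $[S_-]$ the global fixed point is the \emph{attracting} fixed point of $t_i$, while in $[S_i]$ it is the \emph{repelling} one; comparability would force $G_k$ to fix both fixed points of $t_i$ in the smaller structure, contradicting quasi-parabolicity. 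Alternatively, you could observe that $Z_{\beta_0}$ contains elements with arbitrarily large $i$-th coordinate (since $\ker\beta_0\cap\Z^n=0$), hence unbounded $S_i$-length.
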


 \subsubsection{Confining subsets of $\Z[\frac1k]$}

We will describe \emph{all} of the subsets of $\Z[\frac1k]$ which are confining under $\gamma$ with respect to some homomorphism $\rho : \Z^n \to \R$. We note that the subset $Q=\Z[\frac1k]$ is  confining  but not strictly confining under $\gamma$ with respect to all choices of $\rho$ and corresponds to a lineal structure by Lemma \ref{focal}.

  To prove Proposition \ref{prop:qp}, we will show that there are exactly $n+1$ choices for $\rho$ with respect to which there are any  subsets of $\Z[\frac1k]$ that are \emph{strictly} confining under $\gamma$.  By Theorem \ref{thm:main}, each such subset corresponds to a quasi-parabolic structure.  For each of these $n+1$ choices of $\rho$, we will show there is exactly one such structure.

  We begin with a preliminary lemma, which is the analogue of \cite[Lemma~3.2]{AR} and \cite[Lemma 4.9]{Bal}. The proof follows the same lines with a few modifications.   Note that this lemma holds for an arbitrary group $H\rtimes_\gamma\Z^n$.
  
\begin{lem}\label{lem:AddS}
Consider a group $H\rtimes_\gamma \Z^n$, and fix a homomorphism $\rho\colon \Z^n\to\mathbb R$.  Suppose $Q$ is a symmetric subset of $H$ which is confining under $\gamma$ with respect to $\rho$.  Let $S$ be a symmetric subset of $H$ such that there exists $z_0\in\Z^n$ with $\rho(z_0)\geq 0$ such that $\gamma(z_0)(g)\in Q$ for all $g\in S$.  Then 
 \[
 \overline Q=Q\cup\bigcup_{\rho(z)\geq 0}\gamma(z)(S)
 \]
 is confining under $\gamma$ with respect to $\rho$, and 
 \[
 [Q\cup Z_\rho]=[\overline Q\cup Z_\rho].
 \]
 \end{lem}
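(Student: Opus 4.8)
The plan is to isolate one key fact — that the single element $z_0$ already conjugates the enlarged set $\overline{Q}$ back inside $Q$ — and then deduce both conclusions of Lemma~\ref{lem:AddS} from it by routine bookkeeping. First I would record two preliminaries: $\overline{Q}$ is symmetric, since $Q$ and $S$ are symmetric and $\gamma(z)(s)^{-1}=\gamma(z)(s^{-1})$ for every $z$; and $\overline{Q}\cup Z_\rho$ generates $G$, since it contains $Q\cup Z_\rho$, which generates $G$ because $Z_\rho$ generates $\Z^n$ and $Q$ satisfies Definition~\ref{def:confining}(b). The key fact is that $\gamma(z_0)(\overline{Q})\subseteq Q$: indeed $\gamma(z_0)(Q)\subseteq Q$ by Definition~\ref{def:confining}(a) for $Q$ since $\rho(z_0)\geq 0$, and for any $z\in\Z^n$ with $\rho(z)\geq 0$, commutativity of $\Z^n$ gives $\gamma(z_0)(\gamma(z)(S))=\gamma(z)(\gamma(z_0)(S))\subseteq\gamma(z)(Q)\subseteq Q$, using the hypothesis $\gamma(z_0)(S)\subseteq Q$ and again Definition~\ref{def:confining}(a).

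Granting the key fact, I would verify the three conditions of Definition~\ref{def:confining} for $\overline{Q}$ with respect to $\rho$. Condition (b) is inherited directly from $Q\subseteq\overline{Q}$. For condition (a): if $\rho(z)\geq 0$ then $\gamma(z)(Q)\subseteq Q\subseteq\overline{Q}$, and for any $w$ with $\rho(w)\geq 0$ we have $\gamma(z)(\gamma(w)(S))=\gamma(zw)(S)\subseteq\overline{Q}$ because $\rho(zw)=\rho(z)+\rho(w)\geq 0$; hence $\gamma(z)(\overline{Q})\subseteq\overline{Q}$. For condition (c): the key fact gives $\gamma(z_0)(\overline{Q}\cdot\overline{Q})=\gamma(z_0)(\overline{Q})\cdot\gamma(z_0)(\overline{Q})\subseteq Q\cdot Q$, so picking $w\in\Z^n$ with $\gamma(w)(Q\cdot Q)\subseteq Q$ (Definition~\ref{def:confining}(c) for $Q$) yields $\gamma(wz_0)(\overline{Q}\cdot\overline{Q})\subseteq\gamma(w)(Q\cdot Q)\subseteq Q\subseteq\overline{Q}$.

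For the equality $[Q\cup Z_\rho]=[\overline{Q}\cup Z_\rho]$ I would use the word-norm characterization of $\preceq$. Since $Q\cup Z_\rho\subseteq\overline{Q}\cup Z_\rho$ and the preorder reverses inclusion, $\overline{Q}\cup Z_\rho\preceq Q\cup Z_\rho$. For the reverse, $Q\cup Z_\rho\preceq\overline{Q}\cup Z_\rho$, it suffices to bound $\|t\|_{Q\cup Z_\rho}$ uniformly over $t\in\overline{Q}\cup Z_\rho$: for $t\in Z_\rho$ this equals $1$, and for $t=q\in\overline{Q}$ the key fact gives $\gamma(z_0)(q)\in Q$, whence $q=z_0^{-1}\,\gamma(z_0)(q)\,z_0$ in $G$ and $\|q\|_{Q\cup Z_\rho}\leq 2\|z_0\|_{Q\cup Z_\rho}+1$, which is finite and independent of $q$. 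Combining the two directions gives the claimed equality of classes.

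I expect the only genuine subtlety to be the key fact, where commutativity of $\Z^n$ and the sign hypothesis $\rho(z_0)\geq 0$ must be used together so that conjugating by $z_0$ and by an arbitrary ``shift'' element $z$ with $\rho(z)\geq 0$ interact correctly. Condition (c) — near-closure of $\overline{Q}$ under multiplication — is usually the delicate point in checking that a set is confining, but here it collapses immediately to condition (c) for $Q$ once the key fact is in hand, so no real obstacle remains beyond it.
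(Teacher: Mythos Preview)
Your proposal is correct and follows essentially the same route as the paper: both isolate the observation $\gamma(z_0)(\overline{Q})\subseteq Q$ (your ``key fact''), deduce condition (c) for $\overline{Q}$ by composing with the element witnessing condition (c) for $Q$, and obtain the equality of equivalence classes from the bound $\|q\|_{Q\cup Z_\rho}\leq 2\|z_0\|+1$ via $q=z_0^{-1}\gamma(z_0)(q)z_0$. Your write-up is slightly more streamlined in that you apply the key fact uniformly to all of $\overline{Q}$ in the last step, whereas the paper first rewrites $\overline{Q}$ as $Q\cup\bigcup_{0\leq\rho(z)<K_0}\gamma(z)(S)$ and then reaches the same conclusion, but this is a cosmetic difference.
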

 
 \begin{proof}
 Conditions (a) and (b) of Definition \ref{def:confining} are clear.  To show that (c) holds, 
note that for any $z\in\Z^n$ with $\rho(z)\geq 0$ and any $g\in S$, 
\[
\gamma(z_0)(\gamma(z)(g))=\gamma(z)(\gamma(z_0)(g))\in \gamma(z)(Q)\subseteq Q.
\] 
We also have $\gamma(z_0)(g)\in Q $ for any $g\in Q$. Hence, $\gamma(z_0)(\overline Q)\subseteq Q$.  Let $g,h\in\overline Q$, and fix $z_1\in\Z^n$ such that $\gamma(z_1)(Q\cdot Q)\subseteq Q$.  Then 
\[
\gamma(z_0z_1)(gh)=\gamma(z_1)\big(\gamma(z_0)(g)\gamma(z_0)(h)\big)\in \gamma(z_1)(Q\cdot Q)\subseteq  Q\subseteq \overline Q.
\]
Therefore (c) holds with the element $z_0z_1$.

To see that $[Q\cup Z_\rho]=[\overline Q\cup Z_\rho]$, note first that  $[\overline Q\cup Z_\rho]\preceq [Q\cup Z_\rho]$ since $Q\subseteq \overline Q$. For the other direction, let $\rho(z_0)=K_0$.  Then for $z\in\Z^n$ with $\rho(z)\geq K_0$, we have $\gamma(z)(S)\subseteq Q$, since $\gamma(z_0)(S)\subseteq Q$. Hence we may also write
\[
\overline Q=Q\cup \bigcup_{0\leq \rho(z)<K_0}\gamma(z)(S).
\]

Let $g \in \overline{Q}$. If $g \in Q$, then $\|g\|_{Q \cup Z_\rho} \leq 1$. As above, if  $g  = \gamma(z)(s)$ for some $s \in S$ and $0 \leq \rho(z) < K_0$, then  $\gamma(z_0z)(s) \in Q$. Thus $g = \gamma(z)(s) \in \gamma(z_0^{-1})(Q)$.  From this one can see that  $\|g\|_{Q \cup Z_\rho} \leq 2||z_0||_{Z_\rho} +1$, which is a constant independent of $g$.
In other words, any element of $\overline Q\cup Z_\rho$ has uniformly bounded word length with respect to $Q\cup Z_\rho$, and so $[Q\cup Z_\rho]\preceq [\overline Q\cup Z_\rho]$.
 \end{proof}
 
 The following lemma is proven exactly as in the proof of \cite[Lemma~3.3]{AR}, with $p_i^{a_i}$ playing the role of $n$ and $t_i$ playing the role of $t$. We refer the reader to \cite{AR} for the proof. Here $Q\cup \Z$ denotes the union of subsets of $\Z[\frac{1}{k}]$. 
 
 \begin{lem}\label{lem:ZsubsetQ}
 If $\rho(t_i)>0$ for some $i$, then $[Q\cup Z_\rho]=[(Q\cup\Z)\cup Z_\rho]$.
 \end{lem}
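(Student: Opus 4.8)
The plan is to mirror the proof of \cite[Lemma~3.3]{AR}, with $P := p_i^{m_i}$ playing the role of the Baumslag--Solitar parameter and $t_i$ the role of $t$. Since $Q\cup Z_\rho\subseteq (Q\cup\Z)\cup Z_\rho$ and the preorder $\preccurlyeq$ is inclusion-reversing, the inequality $[(Q\cup\Z)\cup Z_\rho]\preccurlyeq[Q\cup Z_\rho]$ is automatic, so it suffices to establish the reverse, i.e.\ that $\sup_{m\in\Z}\|m\|_{Q\cup Z_\rho}<\infty$.

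First I would reduce this to a single absorption statement, and here is where the hypothesis $\rho(t_i)>0$ enters. The goal is to apply Lemma~\ref{lem:AddS} with $S=\Z$: if there is one element $z_0\in\Z^n$ with $\rho(z_0)\geq 0$ and $\gamma(z_0)(m)\in Q$ for every $m\in\Z$, then $\overline Q=Q\cup\bigcup_{\rho(z)\geq 0}\gamma(z)(\Z)$ is confining and $[Q\cup Z_\rho]=[\overline Q\cup Z_\rho]$; taking the trivial element in the union shows $\Z\subseteq\overline Q$, so $Q\cup Z_\rho\subseteq(Q\cup\Z)\cup Z_\rho\subseteq\overline Q\cup Z_\rho$, and the chain $[\overline Q\cup Z_\rho]\preccurlyeq[(Q\cup\Z)\cup Z_\rho]\preccurlyeq[Q\cup Z_\rho]=[\overline Q\cup Z_\rho]$ forces equality throughout. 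Since $\rho(t_i)>0$, the element $z_0=t_i^N$ satisfies $\rho(z_0)=N\rho(t_i)\geq 0$ for all $N\geq 0$ and may be chosen with $\rho(z_0)$ arbitrarily large, so it is enough to find an $N$ with $\gamma(t_i^N)(\Z)=P^N\Z\subseteq Q$.

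To produce such an $N$ I would argue as in \cite{AR}. Condition (b) of Definition~\ref{def:confining}, in the form of Remark~\ref{rem:equivconditions} applied to $h=1\in\Z$, together with $\rho(t_i^N)=N\rho(t_i)\to\infty$, yields $N_0$ with $P^{N_0}=\gamma(t_i^{N_0})(1)\in Q$; condition (a) then upgrades this to $P^M\in Q$ for all $M\geq N_0$, and symmetry of $Q$ gives $-P^M\in Q$ as well. Writing an arbitrary integer in base $P$, its digits times suitable powers of $P$ already lie in $Q$, and one assembles the integer itself (times a power of $P$) by repeatedly invoking condition (c), which stabilizes $Q$ under $(q,q')\mapsto\gamma(z)(q+q')$ for $\rho(z)$ large — in particular for $z=t_i^{M_0}$ — together with condition (a).

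The main obstacle is uniformity. Each application of condition (c) attaches another factor of $P$ before returning to $Q$, so a naive assembly of $m$ forces the exponent $N$, and hence the word length $\|m\|_{Q\cup Z_\rho}$, to grow with $m$; one must instead argue that the depth at which $P^N m$ is absorbed into $Q$ is bounded independently of $m$. This is the technical heart of \cite[Lemma~3.3]{AR}, where the rank-one structure of $\Z[\tfrac1k]$ is exploited to keep the exponent under control, and I would transcribe that argument, substituting $p_i^{m_i}$ for $n$ and $t_i$ for $t$ throughout.
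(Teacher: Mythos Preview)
Your proposal is correct and follows exactly the route the paper indicates: both reduce to the argument of \cite[Lemma~3.3]{AR} with $p_i^{m_i}$ in place of $n$ and $t_i$ in place of $t$, and you have additionally made explicit the reduction via Lemma~\ref{lem:AddS} (with $S=\Z$ and $z_0=t_i^N$), which is precisely why that lemma is placed immediately before. The paper gives no further detail beyond the pointer to \cite{AR}, so your outline is in fact more complete than what appears there.
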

 
 \noindent In other words, the subring $\Z$ of $\Z[\frac1k]$ has bounded word length in $Q\cup Z_\rho$.
 
The next lemma describes $n$ distinct homomorphisms $ \Z^n\to \R$, and, for each homomorphism, identifies a subset of $\Z[\frac1k]$ which is strictly confining under $\gamma$ with respect to it. Define $k_i=\frac{k}{p_i^{m_i}}=p_1^{m_1} \cdots \widehat{p_i^{m_i}} \cdots p_n^{m_n}$, where $\widehat{\cdot}$ indicates that we omit that factor from the product. 

 \begin{lem}\label{lem:Q_i}
 Let $\rho^+_i\colon\Z^n\to\R$ be projection to the $i$-th factor of $\Z^n$ with respect to the basis $\{t_1, t_2,\cdots, t_n\}$, and let $Q_i = \Z\left[\frac{1}{k_i}\right]$.
 Then $Q_i$ is strictly confining under $\gamma$ with respect to $\rho_i^+$.
 \end{lem}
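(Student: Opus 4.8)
The plan is to verify the three conditions of Definition~\ref{def:confining} directly for $Q_i = \Z[\frac1{k_i}]$ with respect to $\rho_i^+$, together with the strictness condition. Recall that under $\gamma$, the generator $t_j$ acts on $\Z[\frac1k]$ as multiplication by $p_j^{m_j}$, so a general element $z = t_1^{a_1}\cdots t_n^{a_n}\in\Z^n$ acts by multiplication by $p_1^{m_1 a_1}\cdots p_n^{m_n a_n}$, and $\rho_i^+(z) = a_i$.

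First I would check condition (a). If $\rho_i^+(z) = a_i \geq 0$, then $\gamma(z)$ is multiplication by $p_1^{m_1 a_1}\cdots p_n^{m_n a_n}$; the factor $p_i^{m_i a_i}$ is an integer (since $a_i\geq 0$), and the factors $p_j^{m_j a_j}$ for $j\neq i$ are units in $\Z[\frac1{k_i}]$ (since $k_i$ is divisible by $p_j^{m_j}$ for all $j\neq i$, and by $p_j$, hence $p_j$ is invertible in $\Z[\frac1{k_i}]$ regardless of the sign of $a_j$). So multiplication by this element maps $\Z[\frac1{k_i}]$ into itself, giving $\gamma(z)(Q_i)\subseteq Q_i$. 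For condition (b): given $h\in\Z[\frac1k]$, write $h = m/p_i^{s}$ for some $m\in\Z[\frac1{k_i}]$ and $s\geq 0$ (pulling out all powers of $p_i$ in the denominator; the remaining denominator involves only primes dividing $k_i$). Then $\gamma(t_i^{\lceil s/m_i\rceil})(h) = p_i^{m_i\lceil s/m_i\rceil} h \in \Z[\frac1{k_i}] = Q_i$, and this $z = t_i^{\lceil s/m_i\rceil}$ has $\rho_i^+(z)\geq 0$. For condition (c): $Q_i\cdot Q_i = \Z[\frac1{k_i}]\cdot\Z[\frac1{k_i}] = \Z[\frac1{k_i}] = Q_i$ since $\Z[\frac1{k_i}]$ is a ring, so condition (c) holds trivially with $z_0$ the identity (or with $z_0 = t_i^0$).

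Finally, for strictness I must exhibit $z\in\Z^n$ with $\gamma(z)(Q_i)\subsetneq Q_i$, and by Remark~\ref{rem:equivconditions} this $z$ should satisfy $\rho_i^+(z)>0$. Take $z = t_i$, so $\gamma(t_i)$ is multiplication by $p_i^{m_i}$. Then $\gamma(t_i)(\Z[\frac1{k_i}]) = p_i^{m_i}\Z[\frac1{k_i}]$, which is a proper ideal of $\Z[\frac1{k_i}]$: indeed $1\notin p_i^{m_i}\Z[\frac1{k_i}]$ because $p_i$ is not invertible in $\Z[\frac1{k_i}]$ (as $p_i\nmid k_i$, $p_i$ remains a non-unit prime in the localization $\Z[\frac1{k_i}]$). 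Hence $\gamma(t_i)(Q_i)\subsetneq Q_i$ and $\rho_i^+(t_i) = 1 > 0$, so $Q_i$ is strictly confining under $\gamma$ with respect to $\rho_i^+$.

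The only genuinely non-routine point is the observation that $p_i$ is a non-unit in $\Z[\frac1{k_i}]$, which is where the hypothesis that $k$ is \emph{not} a prime power enters: if $k$ were a prime power then some $k_i$ would equal $1$ and the corresponding $Q_i$ would be all of $\Z$, collapsing the structure — but here $n\geq 2$, so each $k_i\geq 2$ and involves only primes other than $p_i$, making $p_i$ a non-unit. Everything else is a direct unwinding of the ring structure of localizations of $\Z$, so I do not anticipate any real obstacle; the bookkeeping with units versus integers in the various localizations is the main thing to get right.
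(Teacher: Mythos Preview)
Your proof is correct and follows essentially the same approach as the paper's: you verify conditions (a)--(c) directly using the fact that for $j\neq i$ the primes $p_j$ are units in $\Z[\frac{1}{k_i}]$ while $p_i$ is not, and you exhibit strictness via $1\notin \gamma(t_i)(Q_i)$. One small remark: your justification for (c) (``since $\Z[\frac{1}{k_i}]$ is a ring'') should really say ``closed under addition,'' since the group operation on $H=\Z[\frac{1}{k}]$ is addition, and your final paragraph about the prime-power hypothesis is unnecessary---the argument that $p_i$ is a non-unit in $\Z[\frac{1}{k_i}]$ works regardless, as $p_i\nmid k_i$ by construction.
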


 \begin{proof}

 Fix some $1\leq i\leq n$.    We will first show that $Q_i$ is confining under $\gamma$ with respect to $\rho_i^+$. Fix $z\in\Z^n$ with $\rho_i^+(z)\geq 0$, so that $z=t_1^{b_1}\ldots t_n^{b_n}$, where $b_i\geq 0$. Note that $Q_i$ is closed under multiplication by integers. Since $p_1^{b_1m_1}\cdots \widehat{p_i^{b_im_i}}\cdots p_n^{b_nm_n}$ is an integer times a (possibly negative) power of $k_i$, it follows that for any $q\in Q_i=\Z[\frac{1}{k_i}]$ we have \[\gamma(z)(q) = p_i^{b_im_i} \left(p_1^{b_1m_1} \cdots \widehat{p_i^{b_im_i}} \cdots p_n^{b_nm_n} q\right) \in p_i^{b_im_i} \Z\left[\frac{1}{k_i}\right].\] 
Since $b_i\geq 0$, we have $p_i^{b_im_i}\in \Z$, and so  $\gamma(z)(q)\in Q_i=\Z[\frac{1}{k_i}]$, as desired.

Let $h=\pm h_m\dots h_0.h_{-1}h_{-2}\dots h_{-\ell}\in \Z[\frac1k]$.  Since  $\rho^+_i(t_1\ldots t_n)=1$, we have $\rho^+_i((t_1\ldots t_n)^\ell)=\ell>0$.  Let $z=(t_1\ldots t_n)^\ell$.  Then 
\[
\gamma(z)(h)=\gamma((t_1\ldots t_n)^\ell)(h)=k^\ell h=\pm h_m\dots h_0h_{-1}h_{-2}\dots h_{-\ell}\in\Z\subseteq Q_i,
\] 
 and Definition \ref{def:confining}(b) is satisfied.
 To see that Definition \ref{def:confining}(c) holds, notice that $Q_i$ is closed under addition, and so we can take $z_0$ to be the identity of $\Z^n$.

Finally, $Q_i$ is also \emph{strictly} confining with respect to $\rho_i^+$:   $1\in Q_i$, but $1\notin \gamma(t_i)(Q_i)$ since $\gamma(t_i^{-1})(1)=\frac{1}{p_i^{m_i}}\notin Q_i$. Thus $\gamma(t_i)(Q_i)\subsetneq Q_i$.
 \end{proof}
  
We now describe one additional homomorphism $ \Z^n\to\R$ and a subset which is strictly confining under $\gamma$ with respect to this homomorphism.  We will show that this confining subset, along with the $n$ subsets constructed in Lemma \ref{lem:Q_i} give rise to the $n+1$  quasi-parabolic structures from Proposition \ref{prop:qp}.
 \begin{lem}\label{lem:Q_-}
 Let $\rho_-\colon\Z^n\to\R$ be given by $\rho_-(t_i)=-m_i\log p_i$ for each $i$, and let 
 \[
 Q_-=\left\{x\in \Z\left[\frac1k\right]\Big\vert \ x=\pm 0.x_{-1}x_{-2}\dots x_{-m} \textrm{ for some }m\in \N\right\}\subsetneq \Z\left[\frac1k\right]
 \]
(that is, $Q_-$ is the unit ball in $\R$ intersected with $\Z[\frac{1}{k}]$). Then $Q_-$ is strictly confining under $\gamma$ with respect to $\rho_-$.
 \end{lem}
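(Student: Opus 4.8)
The plan is to reduce all of the confining conditions to elementary estimates in $\R$. The crucial structural observation is that for $z = t_1^{b_1}\cdots t_n^{b_n}\in\Z^n$ the automorphism $\gamma(z)$ acts on $\Z[\frac1k]$ simply as multiplication by the positive rational $c(z) = \prod_{i=1}^n p_i^{b_i m_i}$, which lies in $\Z[\frac1k]$ since $p_i^{m_i}\mid k$; moreover $\rho_-$ is built precisely so that $\rho_-(z) = -\log c(z)$. Hence $\rho_-(z)\geq 0$ if and only if $c(z)\leq 1$, i.e.\ if and only if $\gamma(z)$ is non-expanding for the usual absolute value on $\Z[\frac1k]\subset\R$. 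The second ingredient is the identification $Q_- = \Z[\frac1k]\cap(-1,1)$: every $x\in\Z[\frac1k]$ with $|x|<1$ can be written as $x = a/k^m$ with $a\in\Z$ and $|a|<k^m$, and expanding $|a|$ in base $k$ exhibits $x$ in the form $\pm 0.x_{-1}\cdots x_{-m}$, while conversely any such expression has absolute value at most $1-k^{-m}<1$. In particular $Q_-$ is symmetric and contains $0$, so it meets the standing hypotheses on $Q$ in Definition~\ref{def:confining}.

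Granting these two facts, I would verify the three conditions directly. For (a): if $\rho_-(z)\geq 0$ then $c(z)\leq 1$, so for $x\in Q_-$ the element $\gamma(z)(x)=c(z)x$ lies in $\Z[\frac1k]$ and satisfies $|c(z)x|\leq|x|<1$; hence $\gamma(z)(Q_-)\subseteq Q_-$. For (b): given $h\in\Z[\frac1k]$, take $z=(t_1\cdots t_n)^{-r}$, so that $c(z)=k^{-r}$ and $\rho_-(z)=r\log k>0$; for $r$ large enough $k^{-r}|h|<1$, so $\gamma(z)(h)\in Q_-$ (equivalently, in the language of Remark~\ref{rem:equivconditions}, one may take $R_h=\log|h|+1$). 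For (c): take $z_0=t_1^{-1}$, so that $c(z_0)=p_1^{-m_1}\leq 1/2$ since $p_1\geq 2$ and $m_1\geq 1$; for $x,y\in Q_-$ we have $|x+y|<2$, hence $|\gamma(z_0)(x+y)|=p_1^{-m_1}|x+y|<1$, so $\gamma(z_0)(Q_-\cdot Q_-)\subseteq Q_-$. Finally, for strictness I would note that $\gamma(t_1^{-1})(Q_-)=p_1^{-m_1}Q_- = \Z[\frac1k]\cap(-p_1^{-m_1},p_1^{-m_1})$ is a proper subset of $Q_-$ — for example $p_1^{-m_1}\in Q_-$ while $p_1^{-m_1}\notin\gamma(t_1^{-1})(Q_-)$, since membership there would force $1\in Q_-$ — and since $\rho_-(t_1^{-1})=m_1\log p_1>0$, this witness is of the kind required by Remark~\ref{rem:equivconditions}.

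I do not expect a genuine obstacle: once the two structural observations are in place, conditions (a)--(c) and strictness are each one-line estimates expressing that ``$\rho_-(z)\geq 0$ means $\gamma(z)$ does not expand the unit interval.'' The only point needing a little care is the identification $Q_- = \Z[\frac1k]\cap(-1,1)$, and in particular checking that the base-$k$ description captures \emph{every} element of $\Z[\frac1k]$ of absolute value less than $1$.
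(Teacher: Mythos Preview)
Your proof is correct and takes essentially the same approach as the paper: both identify $Q_-$ with $\Z[\tfrac1k]\cap(-1,1)$ and use the relation $\rho_-(z)=-\log c(z)$ to reduce (a)--(c) and strictness to absolute-value estimates. The only differences are cosmetic choices of witness---the paper uses $z_0=t_1^{-1}\cdots t_n^{-1}$ (multiplication by $k^{-1}$) for (c) and the element $0.1=1/k$ for strictness, whereas you use $t_1^{-1}$ and $p_1^{-m_1}$---and either works.
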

 
 \begin{proof}
 Suppose $z=t_1^{a_1}\ldots t_n^{a_n}\in\Z^n$ satisfies $\rho_-(z)\geq 0$.  Then $-a_1m_1\log p_1-\cdots -a_nm_n\log p_n\geq0$, and so $p_1^{a_1m_1}\cdots p_n^{a_nm_n}\leq 1$.  For $q=\pm0.x_{-1}x_{-2}\ldots x_{-m}\in Q_-$, we have
 \[
 \gamma(z)(q)=\pm (p_1^{a_1m_1}\cdots p_n^{a_nm_n})(0.x_{-1}x_{-2}\dots x_{-m}).
 \]
 In particular, we have a bound on absolute values $|\gamma(z)(q)|\leq |q|$, and thus $\gamma(z)(q)\in Q_-$.  Therefore Definition \ref{def:confining}(a) holds.  Next, let $h=\pm h_m\dots h_0.h_{-1}h_{-2}\dots h_{-\ell}$ be an arbitrary element of $ \Z[\frac1k]$.  
 Taking $z=(t_1^{-1}\ldots t_n^{-1})^m$, we see that 
\[
\gamma(z)(h)=\gamma((t_1^{-1}\ldots t_n^{-1})^m)(h)=k^{-m} h=\pm 0.h_m\dots h_0h_{-1}h_{-2}\dots h_{-\ell}\in Q_- ,
\] 
 and Definition \ref{def:confining}(b) is satisfied.  Finally, let $x,y\in Q_-$.  Then we have 
 \[
 x+y=\pm a_0.a_{-1}\ldots a_{-\ell}
 \]
 where each $a_i\in\{0,\dots, k-1\}$. Let $z_0=t_1^{-1}\ldots t_n^{-1}$.  Then $\rho_-(z_0)>0$, and 
 \[
 \gamma(z_0)(x+y)=\pm k^{-1}\left(  a_0.a_{-1}\ldots a_{-\ell}\right)=\pm0.a_0a_{-1}\ldots a_{-\ell}\in Q_-,
 \]
 and so Definition \ref{def:confining}(c) holds with this choice of $z_0$.  
 
 It remains to show that $Q_-$ is \emph{strictly} confining. % We have that $0.1 \in Q_-$.
 Fix any $z=t_1^{a_1}\ldots t_n^{a_n} \in \Z^n$ with $p_1^{m_1a_1}\cdots p_n^{m_na_n}<\frac{1}{k}$. We have $\rho_-(z)=-\log(p_1^{m_1a_1}\cdots p_n^{m_na_n})>0$ and
 \[
 \gamma(z^{-1})(0.1)=(p_1^{m_1a_1}\cdots p_n^{m_na_n})^{-1}(0.1)>1.
 \]
It follows that $\gamma(z^{-1})(0.1)\not\in Q_-$ and thus $0.1 \notin \gamma(z)(Q_-)$.  Since $0.1\in Q_-$, we conclude that $Q_-$ is strictly confining under $\gamma$ with respect to $\rho_-$.
  \end{proof} 

In the following series of lemmas, we will show that given any homomorphism $\rho\colon\Z^n\to\R$, if  there is a subset of $\Z[\frac{1}{k}]$ which is strictly confining under $\gamma$ with respect to $\rho$, then $\rho$ must be equivalent to $\rho_-$ or $\rho_i$ for some $i$, and the strictly confining subset must give rise to the same quasi-parabolic structure on $G_k$ as $Q_-$ or $Q_i$, respectively.

Fix a homomorphism $\rho\colon\Z^n\to \R$.  We will consider three separate cases, depending on whether $\rho(t_1\ldots t_n)$ is positive, negative, or zero.  Each case is dealt with in a separate lemma.

We first show that if $\rho(t_1\ldots t_n)=0$, there are \emph{no} subsets of $\Z[\frac{1}{k}]$ which are strictly confining under $\gamma$ with respect to $\rho$.
\begin{lem}\label{lem:rho=0}
Suppose $\rho(t_1\ldots t_n)=0$.  If $Q\subseteq \Z[\frac1k]$ is confining under $\gamma$ with respect to $\rho$, then 
\[
[Q\cup Z_\rho]=\left[\Z\left[\frac1k\right]\cup Z_\rho\right].
\]
\end{lem}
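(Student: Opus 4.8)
The plan is to reduce the whole statement to the behaviour of the subring $\Z\subseteq\Z[\frac1k]$, using the key observation that the element $w:=t_1\cdots t_n$ lies in $Z_\rho$. Indeed $\rho(w)=\rho(t_1\cdots t_n)=0$ by hypothesis, so $|\rho(w^s)|=0\le C_\rho$ for every $s\in\Z$ and hence $w^s\in Z_\rho$; on the other hand $\gamma(w)$ acts on $\Z[\frac1k]$ as multiplication by $k=p_1^{m_1}\cdots p_n^{m_n}$. Consequently, multiplication by any power of $k$ is conjugation by a single generator, and therefore changes word length in $Q\cup Z_\rho$ by at most $2$.

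First I would check that some generator has strictly positive image under $\rho$: since $\rho$ is non-zero while $\sum_{i=1}^{n}\rho(t_i)=\rho(t_1\cdots t_n)=0$, the numbers $\rho(t_i)$ cannot all be $\le 0$ (otherwise they would all vanish and $\rho$ would be trivial), so $\rho(t_{i_0})>0$ for some $i_0$. This puts us in a position to apply Lemma~\ref{lem:ZsubsetQ}, which gives $[Q\cup Z_\rho]=[(Q\cup\Z)\cup Z_\rho]$; equivalently, there is a constant $N_1$ with $\|m\|_{Q\cup Z_\rho}\le N_1$ for all $m\in\Z$.

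Next I would bound the word length of an arbitrary $x\in\Z[\frac1k]$. Writing $x=k^{-s}m$ with $m\in\Z$ and $s\ge 0$, in $G$ this reads $x=\gamma(w^{-s})(m)=w^{-s}\,m\,w^{s}$, and since $w^{\pm s}\in Z_\rho$ the triangle inequality for the word norm yields
\[
\|x\|_{Q\cup Z_\rho}\le \|w^{-s}\|_{Q\cup Z_\rho}+\|m\|_{Q\cup Z_\rho}+\|w^{s}\|_{Q\cup Z_\rho}\le 1+N_1+1,
\]
a bound independent of $x$. Hence $\sup_{x\in\Z[\frac1k]}\|x\|_{Q\cup Z_\rho}<\infty$, which means $Q\cup Z_\rho\preceq \Z[\frac1k]\cup Z_\rho$.

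Finally, since $Q\cup Z_\rho\subseteq \Z[\frac1k]\cup Z_\rho$ and the preorder on generating sets is inclusion-reversing, we also have $\Z[\frac1k]\cup Z_\rho\preceq Q\cup Z_\rho$, so the two generating sets are equivalent, proving the lemma. The argument is essentially bookkeeping once Lemma~\ref{lem:ZsubsetQ} is available; the only points requiring care are verifying its hypothesis (that $\rho(t_{i_0})>0$ for some $i_0$) and the membership $w^s\in Z_\rho$, both of which are immediate consequences of $\rho$ being non-zero together with $\rho(t_1\cdots t_n)=0$. I do not anticipate a serious obstacle beyond this.
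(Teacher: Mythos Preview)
Your proof is correct and follows essentially the same approach as the paper: both invoke Lemma~\ref{lem:ZsubsetQ} (after checking some $\rho(t_{i_0})>0$) to bound the word length of $\Z$, then use $\rho(t_1\cdots t_n)=0$ to pass from $\Z$ to all of $\Z[\frac1k]$. The only cosmetic difference is that you use $w^{\pm s}\in Z_\rho$ directly to conjugate, whereas the paper phrases the same step as ``$Q$ is closed under multiplication by $\frac{1}{k}$'' via Definition~\ref{def:confining}(a); your version is arguably more explicit.
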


\begin{proof}
Suppose $Q\subseteq\Z[\frac1k]$ is confining under $\gamma$ with respect to $\rho$. Since $\rho$ is not identically equal to zero, it must be the case that $\rho(t_i)>0$ for some $i$. Thus, by Lemma \ref{lem:ZsubsetQ}, $[Q\cup Z_\rho]=[(Q\cup\Z) \cup Z_\rho]$, where $\Z$ denotes the subring of $\Z[\frac{1}{k}]$. This implies, in particular, that $\Z$ has bounded word length with respect to $Q\cup Z_\rho$. Recall that $k = p_1^{m_1}\ldots p_n^{m_n}$ and that the action of each $t_i$ is by multiplication by $p_i^{m_i}$. Since $\rho(t_1 \ldots t_n)=0$, we have that $\rho(t_1^{-1}\ldots t_n^{-1})=0$ as well, and so by Definition \ref{def:confining}(a) $Q$ is closed under multiplication by $\frac{1}{k}$. Since $\Z$ has bounded word length with respect to $Q\cup Z_\rho$ and $Q$ is closed under multiplication by $\frac{1}{k}$, we see that all of $\Z[\frac{1}{k}]$ has bounded length with respect to $Q\cup Z_\rho$ as well. 
This completes the proof. 
\end{proof}

We next consider the case  $\rho(t_1 \ldots t_n)>0$ and relate confining subsets of $\Z[\frac{1}{k}]$ in $G_k$ to confining subsets of $\Z[\frac{1}{k}]$ in the Baumslag-Solitar group $BS(1,k)$.
\begin{lem}\label{lem:QBS}

Let $Q\subseteq \Z[\frac1k]$ be confining under $\gamma$ with respect to $\rho$.  View $Q\subseteq\Z[\frac1k]$ as a subset of the Baumslag-Solitar group $BS(1,k)=\Z[\frac1k]\rtimes_\alpha \Z$.  If $\rho(t_1 \ldots t_n)> 0$, then $Q$ is confining under the action of $\alpha$.
\end{lem}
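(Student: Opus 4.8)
The plan is to verify directly that $Q$ satisfies the three conditions of Definition~\ref{genconfine} for the automorphism $\alpha$. The observation that makes everything work is that $\alpha$ acts on $\Z[\frac1k]$ as multiplication by $k=p_1^{m_1}\cdots p_n^{m_n}$, and this is exactly $\gamma(w)$ for $w=t_1\cdots t_n\in\Z^n$; by hypothesis $\rho(w)=\rho(t_1\cdots t_n)>0$. Also, $Q$ is symmetric since it is confining under $\gamma$ with respect to $\rho$, so the symmetry requirement in Definition~\ref{genconfine} is automatic. Condition (a) of Definition~\ref{genconfine} is then immediate: since $\rho(w)>0\geq 0$, Definition~\ref{def:confining}(a) gives $\alpha(Q)=\gamma(w)(Q)\subseteq Q$.

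The one step requiring any care is a ``padding'' observation that I would isolate and then apply twice. It says: for any $z\in\Z^n$, any $x\in\Z[\frac1k]$ with $\gamma(z)(x)\in Q$, and any integer $N\geq 0$ with $N\rho(w)\geq\rho(z)$, one has
$$\alpha^N(x)=\gamma(w^N)(x)=\gamma(w^Nz^{-1})\big(\gamma(z)(x)\big)\in\gamma(w^Nz^{-1})(Q)\subseteq Q,$$
where the final inclusion uses $\rho(w^Nz^{-1})=N\rho(w)-\rho(z)\geq 0$ together with Definition~\ref{def:confining}(a); such an $N$ exists precisely because $\rho(w)>0$. Equivalently, one could phrase this using the reformulations in Remark~\ref{rem:equivconditions}.

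With this in hand, condition (b) of Definition~\ref{genconfine} follows by taking an arbitrary $h\in\Z[\frac1k]$, using Definition~\ref{def:confining}(b) to get $z\in\Z^n$ with $\gamma(z)(h)\in Q$, and applying the padding observation (with $x=h$ and a large enough $N$) to conclude $\alpha^N(h)\in Q$, i.e.\ $h\in\alpha^{-N}(Q)$; hence $\Z[\frac1k]=\bigcup_{N\geq 0}\alpha^{-N}(Q)$. Condition (c) follows by taking $z_0\in\Z^n$ with $\gamma(z_0)(Q\cdot Q)\subseteq Q$ from Definition~\ref{def:confining}(c) and applying the padding observation with $z=z_0$ and $x$ ranging over $Q\cdot Q$, yielding $\alpha^N(Q\cdot Q)\subseteq Q$ for $N$ large, which is condition (c) with $n_0=N$. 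I do not expect a genuine obstacle: the entire content is translating the rank-$n$ confining conditions into rank-one ones, and the only subtlety is that the elements $z$ and $z_0$ produced by Definition~\ref{def:confining} need not be powers of $w$, which is exactly what the padding step handles by absorbing the extra factor $\gamma(w^Nz^{-1})$ via condition (a).
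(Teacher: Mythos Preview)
Your proof is correct and follows essentially the same approach as the paper: both identify $\alpha=\gamma(w)$ for $w=t_1\cdots t_n$ with $\rho(w)>0$, obtain (a) directly, and handle (b) and (c) by choosing $N$ large enough that $N\rho(w)$ exceeds $\rho(z)$ (respectively $\rho(z_0)$) so that the extra factor $\gamma(w^Nz^{-1})$ preserves $Q$. The only cosmetic difference is that you isolate the ``padding'' step as a reusable observation, whereas the paper invokes Remark~\ref{rem:equivconditions} for (b) and inlines the same computation for (c).
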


\begin{proof}
We will show that $Q$ satisfies the conditions of Definition \ref{genconfine}.   
Recall that $BS(1,k)=\Z[\frac1k]\rtimes_\alpha\langle s\rangle$.  Then $\alpha$ and $\gamma(t_1 \ldots t_n)$ both act on $\Z[\frac1k]$ as multiplication by $k$, and so $\alpha(Q)=\gamma(t_1\ldots t_n)(Q)$.  Since $\rho(t_1\ldots t_n)> 0$, we have $\alpha(Q)\subseteq Q$, and  Definition \ref{genconfine}(a) is satisfied.  Moreover, for any $u\in \Z[\frac1k]$ and $z\in \Z^n$, we have $\gamma(z)(u)\in Q$ whenever $\rho(z)$ is sufficiently large. In particular $\alpha^a(u)=\gamma((t_1\ldots t_n)^a)(u)\in Q$ for $a\in \Z$ sufficiently large, and  Definition \ref{genconfine}(b) is satisfied.  Finally, since  $\rho(t_1\ldots t_n)> 0$, there is some constant $b_0\in \Z$ such that $\rho((t_1\ldots t_n)^{b_0})>\rho(z_0)$, where $z_0$ is as in Definition \ref{def:confining}(c).  Thus $\alpha^{b_0}(Q+Q)=\gamma\big((t_1\ldots t_n)^{b_0}\big)(Q+Q)\subseteq Q$, and  Definition \ref{genconfine}(c) is satisfied.  Therefore, $Q$ is confining under $\alpha$.
\end{proof}

\begin{lem}\label{lem:rho>0}
Suppose $\rho(t_1\ldots t_n)>0$ and $Q\subseteq \Z[\frac1k]$ is confining under $\gamma$ with respect to $\rho$.  
If $Q$ is strictly confining, then for some $i$ we have
\[
[Q\cup Z_\rho]=[Q_i\cup Z_\rho]
\] and $\rho\sim\rho^+_i$.  Otherwise, \[
[Q\cup Z_\rho]=\left [\Z\left[\frac1k\right ]\cup Z_\rho \right ].
\] 
\end{lem}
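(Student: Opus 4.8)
The plan is to reduce to the Baumslag--Solitar case via Lemma \ref{lem:QBS} and then exploit the multiplicative structure of $\Z[\frac1k]$ to constrain both $Q$ and $\rho$. The non-strict case is immediate: if $Q$ is not strictly confining under $\gamma$ with respect to $\rho$, then, exactly as in the proof of Lemma \ref{focal}, $\gamma(z)(Q)=Q$ for every $z\in\Z^n$ with $\rho(z)\geq 0$, and combining this with Definition \ref{def:confining}(b) forces $Q=\Z[\frac1k]$; hence $[Q\cup Z_\rho]=[\Z[\frac1k]\cup Z_\rho]$. So assume for the rest of the argument that $Q$ is strictly confining.

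Since $\rho(t_1\cdots t_n)>0$, Lemma \ref{lem:QBS} shows that, viewed inside $BS(1,k)=\Z[\frac1k]\rtimes_\alpha\langle s\rangle$, the set $Q$ is confining under $\alpha$. By Proposition \ref{thm:BS1k} there is a full divisor $l=\prod_{j\in I}p_j^{m_j}$ of $k$, $I\subseteq\{1,\dots,n\}$, with $[Q\cup\{s^{\pm1}\}]=[\Z[\frac1l]\cup\{s^{\pm1}\}]$ in $\mathcal H_{qp}(BS(1,k))$; concretely, there is $M$ such that every $q\in Q$ has word length $\leq M$ in $\Z[\frac1l]\cup\{s^{\pm1}\}$ and every $a\in\Z[\frac1l]$ has word length $\leq M$ in $Q\cup\{s^{\pm1}\}$. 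Writing $v_p$ for the $p$-adic valuation on $\Z[\frac1k]$, I extract the key estimate: for each $j\notin I$ there is $C_j$ with $v_{p_j}(q)\geq -C_j$ for all $q\in Q$. Indeed a length-$\leq M$ word for $q$ in $BS(1,k)$ expresses $q=\sum_t k^{e_t}a_t$ with $a_t\in\Z[\frac1l]$, $|e_t|\leq M$, and $v_{p_j}(k^{e_t}a_t)=e_t v_{p_j}(k)+v_{p_j}(a_t)\geq e_t m_j\geq -Mm_j$ since $p_j\nmid l$ gives $v_{p_j}(a_t)\geq 0$ and $v_{p_j}(k)=m_j$; so $v_{p_j}(q)\geq -Mm_j$. (Note $Q$ contains a nonzero element by Definition \ref{def:confining}(b).) This lets me rule out $l=k$: if $I=\{1,\dots,n\}$ then $\Z[\frac1l]=\Z[\frac1k]=H$, and since $s=t_1\cdots t_n$ has $Z_\rho$-word length at most $n$, the equivalence $[Q\cup\{s^{\pm1}\}]=[H\cup\{s^{\pm1}\}]$ in $BS(1,k)$ upgrades to $[Q\cup Z_\rho]=[H\cup Z_\rho]$ in $G_k$; but $H$ is confining and not strictly confining with respect to $\rho$, so $G_k\acts\Gamma(G_k,H\cup Z_\rho)$ is lineal by Lemma \ref{focal}, contradicting that $G_k\acts\Gamma(G_k,Q\cup Z_\rho)$ is quasi-parabolic by Theorem \ref{thm:main}(ii). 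Hence $I^c:=\{1,\dots,n\}\setminus I\neq\emptyset$.

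The core of the argument is a single template. If $z\in\Z^n$ has $\rho(z)\geq 0$ then $\gamma(z^d)(Q)\subseteq Q$ for all $d\geq 0$ by Definition \ref{def:confining}(a); writing $\gamma(z)$ as multiplication by $\lambda_z\in\Z[\frac1k]^{\times}$, if $v_{p_j}(\lambda_z)<0$ for some $j\notin I$ then, for any fixed nonzero $q\in Q$, $v_{p_j}(\gamma(z^d)(q))=d\,v_{p_j}(\lambda_z)+v_{p_j}(q)\to-\infty$, contradicting $v_{p_j}\geq -C_j$ on $Q$. Applying this with $z=t_j^{-1}$ for $j\in I^c$ --- which is permissible precisely when $\rho(t_j)\leq 0$, and then gives a contradiction since $v_{p_j}(\lambda_z)=-m_j<0$ --- shows $\rho(t_j)>0$ for every $j\in I^c$. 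Applying it with $z=t_{j_1}^{-1}t_{j_2}^{N}$ for distinct $j_1,j_2\in I^c$ and $N$ large (so $\rho(z)>0$ as $\rho(t_{j_2})>0$, with $v_{p_{j_1}}(\lambda_z)=-m_{j_1}<0$) shows $|I^c|=1$; write $I^c=\{i\}$, so $l=k_i$ and $\Z[\frac1l]=Q_i$. Applying it with $z=t_i^{-1}t_j^{\pm N}$ for $j\neq i$ and $N$ large, the sign chosen so $\rho(z)>0$ (with $v_{p_i}(\lambda_z)=-m_i<0$), forces $\rho(t_j)=0$ for every $j\neq i$. Thus $\rho=\rho(t_i)\,\rho^+_i$ with $\rho(t_i)>0$, i.e.\ $\rho\sim\rho^+_i$.

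Finally, $l=k_i$ turns the Baumslag--Solitar equivalence into $[Q\cup\{s^{\pm1}\}]=[Q_i\cup\{s^{\pm1}\}]$ in $BS(1,k)$; since $BS(1,k)\leq G_k$ with $s=t_1\cdots t_n$ of $Z_\rho$-word length at most $n$, and $Z_\rho$ has word length $1$ in each of $Q\cup Z_\rho$ and $Q_i\cup Z_\rho$, every element of $Q_i\cup Z_\rho$ has $(Q\cup Z_\rho)$-word length at most $Mn$ in $G_k$ and every element of $Q\cup Z_\rho$ has $(Q_i\cup Z_\rho)$-word length at most $Mn$, so $[Q\cup Z_\rho]=[Q_i\cup Z_\rho]$, as required. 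I expect the main obstacle to be the valuation bookkeeping in the third paragraph --- in particular arranging the template so it yields all three of $\rho>0$ on $I^c$, $|I^c|=1$, and $\rho=0$ off $i$ --- together with the care needed in passing word-length bounds across $BS(1,k)\hookrightarrow G_k$, which are one-directional and rely on $s$ being short in $Z_\rho$.
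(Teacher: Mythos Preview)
Your proof is correct and follows essentially the same route as the paper: both reduce to $BS(1,k)$ via Lemma \ref{lem:QBS}, invoke Proposition \ref{thm:BS1k} to obtain a full divisor $l$, and then exploit closure of $Q$ under $\gamma(z)$ for $\rho(z)\geq 0$ to constrain $l$ and $\rho$. Your $p$-adic valuation bound $v_{p_j}|_Q\geq -C_j$ for $j\notin I$ is exactly the paper's containment $k^N Q\subseteq\Z[\frac1l]$ rephrased, and your unified ``template'' packages in one stroke what the paper does in two passes (first showing $k_i\mid l$ by considering $\gamma(t_i^f t_j^{-1})$, then separately deducing $\rho(t_j)=0$ for $j\neq i$); the underlying contradictions are identical.
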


\begin{proof}
Let $Q$ be confining under $\gamma$ with respect to $\rho$, where $\rho(t_1 \ldots t_n)>0$, as in the statement of the lemma.  
Our assumption that $\rho(t_1 \ldots t_n)>0$ ensures that there exists $1\leq i\leq n$  with $\rho(t_i)>0$. We will show that when $Q$ is strictly confining (equivalently, when $[Q\cup Z_\rho]\neq \left[ \Z\left[\frac{1}{k}\right] \cup Z_\rho\right]$), there is  a unique such index $i$, and $[Q\cup Z_\rho]=[Q_i\cup Z_\rho]$ for this $i$.    

By Lemma \ref{lem:QBS}, we see that $Q$ is a confining subset of $\Z[\frac{1}{k}]$ under the automorphism $\alpha$ given by multiplication by $k$. Hence   we may consider the Baumslag-Solitar group $BS(1,k)=\Z[\frac{1}{k}]\rtimes_\alpha \langle s \rangle$, and by Proposition \ref{thm:BS1k} we have that $[Q\cup \{s^{\pm 1}\}]=[\Z[\frac{1}{l}]\cup \{s^{\pm 1}\}]$, as generating sets of $BS(1,k)$, for some full divisor $l$ of $k$.

It follows that there exists a positive integer $N$ such that every element of $Q$ has word length at most $N$ in the generating set $\Z[\frac{1}{l}]\cup \{s^{\pm 1}\}$. We claim additionally that $\alpha^N(Q) = k^N Q \subseteq \Z[\frac{1}{l}]$. To see this, write an element $g\in Q$ as a word $g=g_1\ldots g_r$ in the generating set $\Z[\frac{1}{l}]\cup \{s^{\pm 1}\}$. Since $\alpha(\Z[\frac1l])=s\Z[\frac1l]s^{-1}\subseteq \Z[\frac1l]$, we may move any $g_i$ which is equal to $s$ to the right and any $g_i$ which is equal to $s^{-1}$ to the left, keeping the length of the word unchanged. The result is a geodesic word \[g=s^{-t} h_1 \ldots h_v s^u\] where $h_i \in \Z[\frac{1}{l}]$ for all $i$ and $t,u\geq 0$. Since $r\leq N$ we also have $t,u\leq r \leq N$. Moreover, since $\Z[\frac{1}{l}]$ is a subgroup of $\Z[\frac{1}{k}]$, we have $h_1\ldots h_v=h\in \Z[\frac{1}{l}]$. Finally, since $g$ is contained in $\Z[\frac{1}{k}]$ we have $t=u$. Thus \[g=s^{-t}hs^t=\alpha^{-t}(h),\] and this proves that $\alpha^t(g)=h\in \Z[\frac{1}{l}]$ with $t\leq N$.

We will next show that either $l=k$ or $l=k_i=p_1^{m_1} \ldots \widehat{p_i^{m_i}} \ldots p_n^{m_n}$, where $\widehat{\cdot}$ denotes omission of the corresponding factor. For any $j\neq i$, there exists $f>0$ sufficiently large that $\rho(t_i^f t_j^{-1})>0$. Consequently $Q$ is closed under $\gamma(t_i^ft_j^{-1})$, which is the automorphism given by multiplication by $p_i^{fm_i}/p_j^{m_j}$. For any $x\in Q\setminus \{0\}$ and any integer $r>0$, we have \[\frac{p_i^{rfm_i}}{p_j^{rm_j}}x\in Q,\] and thus 
\[ k^N \frac{p_i^{rfm_i}}{p_j^{rm_j}} x = \alpha^N\left(\frac{p_i^{rfm_i}}{p_j^{rm_j}}x\right) \in \Z\left[\frac{1}{l}\right].\] 
Writing this rational number as a reduced fraction we see that, as long as $r$ is sufficiently large, the denominator is divisible by $p_j$. Since any element of $\Z[\frac{1}{l}]$ has a divisor of a power of $l$ as its denominator when written as a reduced fraction, we must have that $p_j$ divides $l$. Finally, since $l$ is a \emph{full} divisor of $k$, this implies that $p_j^{m_j}$ divides $l$. Since this is true for \emph{any} $j\neq i$, it follows that $k_i$ divides $l$. Thus $l=k_i$ or $l=k$.

If there is some index $j\neq i$ such that $\rho(t_j)$ is also positive, then by running the above argument with $j$ in place of $i$, we see that $k_j$ must also divide $l$.  This implies that $l=k$.  In particular, the only way we can have $l=k_i$ is if $\rho(t_i)>0$ and $t_i$ is the \emph{unique} generator with positive image.

Regardless of whether $l=k_i$ or $l=k$, we have that $[Q\cup \{s^{\pm 1}\}]=[\Z[\frac{1}{l}]\cup \{s^{\pm 1}\}]$. This also proves $[Q\cup Z_\rho]=[\Z[\frac{1}{l}]\cup Z_\rho]$. To see this, note that every element of $\Z[\frac{1}{l}]$ has bounded word length in $Q\cup \{s^{\pm 1}\}$ and that $t_1\ldots t_n$, which acts by conjugation on $\Z[\frac{1}{l}]$ in the same way as $s$, has finite word length in $Z_\rho$. Thus $\Z[\frac{1}{l}]$ has bounded word length in $Q\cup Z_\rho$. Since elements of $Z_\rho$ trivially have bounded word length in $Q\cup Z_\rho$, all of $\Z[\frac{1}{l}]\cup Z_\rho$ has bounded word length in $Q\cup Z_\rho$. The fact that $Q\cup Z_\rho$ has bounded word length in $\Z[\frac{1}{l}]\cup Z_\rho$ is also trivial.

It remains to show that if $l=k_i$ then $\rho\sim \rho_i^+$. Recall that $i$ is the unique index with $\rho(t_i)>0$.  Suppose towards a contradiction that $\rho(t_j)<0$ for some $j\neq i$.  Since $Q$ is confining under $\gamma$ with respect to $\rho$, for any $h\in \Z[\frac{1}{k}]$, there is some $z\in \Z^n$  such that $\gamma(z)(h)\in Q$. In particular, for each positive integer $L$, there is such an element $z\in \Z^n$ so that $\gamma(z)\left(1/p_i^{Lm_i}\right)\in Q$.  Since $\rho(t_j^{-1})>0$ we have $\rho(t_j^{-M})>\rho(z)$ for all sufficiently large $M$. Thus   
\[
\gamma\left(t_j^{-M}\right)\left(\frac{1}{p_i^{Lm_i}}\right)=p_j^{-Mm_j}p_i^{-Lm_i}\in Q,
\]
and so the word length of $p_j^{-Mm_j}p_i^{-Lm_i}$  with respect to $\Z[\frac{1}{k_i}]\cup \{s^{\pm 1}\}$ is bounded independently of the choice of $L$, as long as $M$ is large enough. Taking $L$ arbitrarily large leads to a contradiction.
\end{proof}

Finally, we turn our attention to homomorphisms satisfying $\rho(t_1\ldots t_n)<0$ and again relate confining subsets of $\Z[\frac{1}{k}]$ in $G_k$ to confining subsets of $\Z[\frac{1}{k}]$ in $BS(1,k)$.
\begin{lem}\label{lem:QBS-}
Let $Q\subseteq \Z[\frac1k]$ be confining under $\gamma$ with respect to $\rho$.  View $Q\subseteq\Z[\frac1k]$ as a subset of the Baumslag-Solitar group $BS(1,k)=\Z[\frac1k]\rtimes_\alpha \Z$.  If $\rho(t_1 \ldots t_n)< 0$, then $Q$ is confining under $\alpha^{-1}$.\end{lem}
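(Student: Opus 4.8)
The plan is to mirror the proof of Lemma \ref{lem:QBS} almost verbatim, with $t_1\ldots t_n$ replaced by its inverse and $\alpha$ replaced by $\alpha^{-1}$. The crucial observation is the same as there: $\gamma(t_1\ldots t_n)$ acts on $\Z[\frac1k]$ as multiplication by $k$, so $\gamma\big((t_1\ldots t_n)^{-1}\big)$ acts as multiplication by $\frac1k$, which is precisely the automorphism $\alpha^{-1}$ of $BS(1,k)=\Z[\frac1k]\rtimes_\alpha \Z$. Since by hypothesis $\rho(t_1\ldots t_n)<0$, we have $\rho\big((t_1\ldots t_n)^{-1}\big)>0$, so $\rho$ is large and positive on high powers of $(t_1\ldots t_n)^{-1}$, which is exactly the regime in which Definition \ref{def:confining} and Remark \ref{rem:equivconditions} give us control over $Q$.

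Concretely, I would check the three conditions of Definition \ref{genconfine} for $\alpha^{-1}$ in turn. For condition (a): since $\rho\big((t_1\ldots t_n)^{-1}\big)>0\ge 0$, Definition \ref{def:confining}(a) gives $\alpha^{-1}(Q)=\gamma\big((t_1\ldots t_n)^{-1}\big)(Q)\subseteq Q$. For condition (b): given $h\in\Z[\frac1k]$, the equivalent form of Definition \ref{def:confining}(b) in Remark \ref{rem:equivconditions} supplies an $R_h$ with $\gamma(z)(h)\in Q$ whenever $\rho(z)\ge R_h$; as $\rho\big((t_1\ldots t_n)^{-a}\big)=-a\,\rho(t_1\ldots t_n)\to +\infty$, we get $\alpha^{-a}(h)=\gamma\big((t_1\ldots t_n)^{-a}\big)(h)\in Q$ for all large $a$, i.e.\ $h\in\alpha^a(Q)$, so $\Z[\frac1k]=\bigcup_{a\ge 0}\alpha^a(Q)$. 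For condition (c): Remark \ref{rem:equivconditions} gives an $R_0$ with $\gamma(z)(Q\cdot Q)\subseteq Q$ for every $z$ with $\rho(z)\ge R_0$; choosing $b_0$ so that $-b_0\,\rho(t_1\ldots t_n)\ge R_0$ yields $\alpha^{-b_0}(Q+Q)=\gamma\big((t_1\ldots t_n)^{-b_0}\big)(Q\cdot Q)\subseteq Q$, which is Definition \ref{genconfine}(c) with $n_0=b_0$.

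There is no genuine obstacle here; the only point needing a moment's care is the bookkeeping that matches the multiplicative description of $\alpha$ and $\alpha^{-1}$ (multiplication by $k$ versus $\frac1k$) with the sign of $\rho$ on $t_1\ldots t_n$, so as to be certain that the three confining conditions for $\alpha^{-1}$ are exactly the ones handed to us by Definition \ref{def:confining} applied to elements of large positive $\rho$-image. Once this is set up, the proof is a line-for-line transcription of the proof of Lemma \ref{lem:QBS}, and I would simply write it out in that form.
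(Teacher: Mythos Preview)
Your proposal is correct and follows essentially the same approach as the paper's proof: both identify $\alpha^{-1}$ with $\gamma(t_1^{-1}\ldots t_n^{-1})$, use $\rho(t_1^{-1}\ldots t_n^{-1})>0$ to verify Definition~\ref{genconfine}(a) directly, and then invoke the equivalent forms of Definition~\ref{def:confining}(b),(c) (Remark~\ref{rem:equivconditions}) applied to high powers of $(t_1\ldots t_n)^{-1}$ for the remaining two conditions. The paper's write-up is terser (it leaves (c) as ``analogous''), but the argument is the same.
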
  

\begin{proof}
The proof follows the same lines as the proof of Lemma \ref{lem:QBS}. To show that Definition \ref{genconfine}(a) holds, we note that $\rho(t_1^{-1}\ldots t_n^{-1})>0$, and so $\alpha^{-1}(Q)=\gamma(t_1^{-1}\ldots t_n^{-1})(Q)\subseteq Q$.   For Definition \ref{genconfine}(b), let $u\in \Z[\frac{1}{k}]$. By Definition \ref{def:confining}(b), $\gamma(z)(u)\in Q$ for all $z\in \Z^n$ with $\rho(z)$ sufficiently large.  If $a\in \Z$ is chosen so that $\rho((t_1^{-1}\ldots t_n^{-1})^a)$ is sufficiently large, then $\alpha^{-a}(u)=\gamma((t_1^{-1}\ldots t_n^{-1})^a)(u)\in Q$. The proof that Definition \ref{genconfine}(c) holds is analogous.
\end{proof}

\begin{lem}\label{lem:rho<0}
Suppose $\rho(t_1\ldots t_n)<0$.  If $Q\subseteq \Z[\frac1k]$ is strictly confining under $\gamma$ with respect to $\rho$, then 
\[
[Q\cup Z_\rho]=[Q_-\cup Z_\rho]
\]  and  $\rho \sim \rho_-$.
\end{lem}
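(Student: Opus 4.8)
The plan is to reduce to the already-understood classification of confining subsets in $BS(1,k)=\Z[\frac1k]\rtimes_\alpha\langle s\rangle$, exactly as in the proof of Lemma~\ref{lem:rho>0}. Since $\rho(t_1\cdots t_n)<0$, Lemma~\ref{lem:QBS-} shows that $Q$, viewed as a subset of $BS(1,k)$, is confining under $\alpha^{-1}$, so Proposition~\ref{thm:BS1k} gives that either $[Q\cup\{s^{\pm1}\}]=[\Z[\frac1k]\cup\{s^{\pm1}\}]$ or $[Q\cup\{s^{\pm1}\}]=[Q_-\cup\{s^{\pm1}\}]$ in $\mathcal H_{qp}(BS(1,k))$ (note that the set $C_-$ of that proposition is precisely $Q_-$). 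I would transfer each alternative to $G_k$: the map $BS(1,k)\hookrightarrow G_k$ that is the identity on $\Z[\frac1k]$ and sends $s\mapsto t_1\cdots t_n$ is a homomorphism, since $\alpha$ and conjugation by $t_1\cdots t_n$ both act on $\Z[\frac1k]$ as multiplication by $k$; and $(t_1\cdots t_n)^{\pm1}$ has word length at most $n$ in $Z_\rho$. Hence a bounded-word-length comparison with $W\cup\{s^{\pm1}\}$ (for $W\in\{\Z[\frac1k],Q_-\}$) becomes one with $W\cup Z_\rho$, so $[Q\cup Z_\rho]=[W\cup Z_\rho]$ in $\mathcal G(G_k)$ for the corresponding $W$.

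Next I would eliminate the first alternative using the hypothesis that $Q$ is strictly confining. If $[Q\cup Z_\rho]=[\Z[\frac1k]\cup Z_\rho]$, then since $\Ga(\Z^n,Z_\rho)$ is a quasi-line and the cosets of $\Z[\frac1k]$ have diameter one in $\Ga(G_k,\Z[\frac1k]\cup Z_\rho)$, this Cayley graph is $G_k$--equivariantly quasi-isometric to $\Ga(\Z^n,Z_\rho)$, so the action $G_k\acts\Ga(G_k,Q\cup Z_\rho)$ would be lineal; but by Theorem~\ref{thm:main}(ii) it is quasi-parabolic, contradicting Theorem~\ref{main00}. Therefore $[Q\cup\{s^{\pm1}\}]=[Q_-\cup\{s^{\pm1}\}]$, and hence $[Q\cup Z_\rho]=[Q_-\cup Z_\rho]$, which is the first assertion of the lemma.

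The hard part is $\rho\sim\rho_-$, and the crucial observation is that $Q$ is \emph{bounded} as a subset of $\R$. To get this I would write an arbitrary $q\in Q$ as a word of bounded length $N$ in $Q_-\cup\{s^{\pm1}\}$ and push all occurrences of $s$ to the left and of $s^{-1}$ to the right using the relations $cs=s(c/k)$ and $s^{-1}c=(c/k)s^{-1}$ for $c\in Q_-$ (here $c/k\in Q_-$ because $Q_-$ is the unit ball of $\R$). This rewrites $q$ as $s^a c\,s^{-a}=k^a c$ with $a\le N$ and $|c|\le N$, so $|q|\le Nk^N$. Granting boundedness, condition~(a) of Definition~\ref{def:confining} pins down the positive cone of $\rho$: if $z=t_1^{a_1}\cdots t_n^{a_n}$ has $\rho(z)\ge0$, then $\gamma(z)$ acts by multiplication by $\lambda_z:=\prod_i p_i^{a_im_i}$ and $\gamma(z^j)(Q)=\lambda_z^jQ\subseteq Q$ for every $j\ge0$; choosing any nonzero $q_0\in Q$ (which exists by condition~(b)), the elements $\lambda_z^jq_0\in Q$ remain bounded, which forces $\lambda_z\le1$, i.e.\ $\rho_-(z)=-\log\lambda_z\ge0$. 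Thus $\{z:\rho(z)\ge0\}\subseteq\{z:\rho_-(z)\ge0\}$.

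Finally I would upgrade this inclusion to an equality using the arithmetic fact that $\rho_-$ has trivial kernel on $\Z^n$: if $\sum_i a_im_i\log p_i=0$ with $a_i\in\Z$, then $\prod_i p_i^{a_im_i}=1$, which by unique factorization forces every $a_i=0$. Indeed, if $\rho_-(z)\ge0$ but $\rho(z)<0$, then $z^{-1}\in\{\rho\ge0\}\subseteq\{\rho_-\ge0\}$ gives $\rho_-(z)\le0$, hence $\rho_-(z)=0$, hence $z=0$ and $\rho(z)=0$, a contradiction. So $\{z:\rho(z)\ge0\}=\{z:\rho_-(z)\ge0\}$, and Lemma~\ref{lem:multiplekernel} then yields that $\rho$ and $\rho_-$ are positive scalar multiples of each other, i.e.\ $\rho\sim\rho_-$. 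The obstacle to flag is precisely this last passage: a coarse equivariant quasi-isometry of Cayley graphs does not see the homomorphism $\rho$, so one genuinely needs the boundedness of $Q$ in $\R$ — extracted from the \emph{finitely generated} model $BS(1,k)$ rather than from $Q\cup Z_\rho$ itself — in order to recover $\rho$ up to positive scaling.
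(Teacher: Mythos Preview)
Your proof is correct and follows essentially the same strategy as the paper: reduce to the $BS(1,k)$ classification via Lemma~\ref{lem:QBS-} and Proposition~\ref{thm:BS1k}, rule out the $\Z[\frac1k]$ alternative using that $Q$ is strictly confining, transfer the equivalence $[Q\cup\{s^{\pm1}\}]=[Q_-\cup\{s^{\pm1}\}]$ to $G_k$, and then pin down $\rho$ using the boundedness of $Q$ in $\R$. The only real difference is in the endgame for $\rho\sim\rho_-$: the paper argues directly by contradiction (if $\rho\not\sim\rho_-$, Lemma~\ref{lem:multiplekernel} furnishes $z$ with $\rho(z)>0$ and $\rho_-(z)<0$, whence $\gamma(z)^i$ applied to a nonzero element of $Q$ produces elements of unbounded absolute value but bounded $Q_-\cup\{s^{\pm1}\}$--length), whereas you first establish $\{\rho\ge0\}\subseteq\{\rho_-\ge0\}$ and then invoke the arithmetic fact that $\rho_-$ has trivial kernel on $\Z^n$ to upgrade this to equality before applying Lemma~\ref{lem:multiplekernel}. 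Both routes are valid; the paper's is a touch more direct since it bypasses the kernel computation, while yours makes the boundedness of $Q$ in $\R$ explicit and isolates it as the key mechanism.
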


\begin{proof}
The proof follows the same lines as the proof of Lemma \ref{lem:rho>0}.  Suppose $Q\subseteq\Z[\frac1k]$ is strictly confining under $\gamma$ with respect to $\rho$.  By Lemma \ref{lem:QBS-}, $Q$ is a subset which is confining under $\alpha^{-1}$, when viewed as a subset of $BS(1,k)=\Z[\frac1k]\rtimes_\alpha \Z$.  By Proposition \ref{thm:BS1k}, up to bounded word length, the subsets $A$ of $\Z[\frac1k]\subseteq BS(1,k)$ which are confining under $\alpha^{-1}$ are exactly $A=\Z[\frac1k]$ and $A=Q_-$, where here word length is measured in the generating set $A\cup \{s^{\pm1}\}$.  As we assume that $Q$ is strictly confining under $\gamma$ with respect to $\rho$, the set $Q$ is not within bounded word length of $\Z[\frac1k]$.  Therefore, $Q$ is within bounded distance of $Q_-$ when word length is measured in the generating set $Q_-\cup \{s^{\pm1}\}$. 

As in the proof of Lemma \ref{lem:rho>0},  we see that $Q_-$ has bounded word length with respect to the generating set $Q\cup Z_\rho$ and vice versa. Finally, we show that $\rho\sim \rho_-$. The proof is analogous to the proof of the corresponding fact in Lemma \ref{lem:rho>0}. If $\rho\not\sim \rho_-$, then there exists $z\in \Z^n$ with $\rho(z)>0$ but $\rho_-(z)< 0$. Writing $z=t_1^{a_1}\ldots t_n^{a_n}$ and $y=p_1^{a_1m_1}\cdots p_n^{a_nm_n}\in \Z[\frac1k]$, we have that $\gamma(z)$ acts as multiplication by $y$ and that $y>1$ (since $\rho_-(z)<0$). Choosing any $x\in Q\setminus \{0\}$, we have that $\gamma(z^i)(x)=y^ix\in Q$ for any $i\geq 0$, and thus $y^ix$ has bounded word length in $Q_-\cup \{s^{\pm 1}\}$ for each $i\geq 0$. For very large $i$, the number $y^ix$ is very large in absolute value, and therefore the word length of $y^ix$ in $Q_-\cup \{s^{\pm 1}\}$ is also very large. This is a contradiction.
\end{proof}

We are now ready to prove Proposition~\ref{prop:qp}.
\begin{proof}[Proof of Proposition~\ref{prop:qp}] Fix $[Y]\in\mathcal H_{qp}(G_k)$, and let $\beta$ be the associated Busemann pseudocharacter. As explained at the beginning of Section 4, we have that $\beta(\Z[\frac1k])=0$ and $\beta$ is a homomorphism.

  By Theorem \ref{prop:main}, there exists $Q\subseteq \Z[\frac1k]$ which is strictly confining under $\gamma$ with respect to $\beta$ and such that $[Q\cup Z_\beta]\sim [Y]$, where $Z_\beta$ is as in \eqref{eqn:Zrho}. Conversely, Theorem \ref{thm:main} shows that for each subset $Q\subseteq \Z[\frac1k]$ which is strictly confining under $\gamma$ with respect to some homomorphism $\rho\colon\Z^n\to\R$, we have $[Q\cup Z_\rho]\in\mathcal H_{qp}(G_k)$ and the Busemann pseudocharacter for this hyperbolic structure is equivalent to $\rho$.
  
  Therefore Theorems \ref{thm:main} and \ref{prop:main} show that it suffices to classify equivalence classes of generating sets of $G_k$ of the form $Q\cup Z_\rho$, where $Q$ is a subset of $\Z[\frac1k]$ that is strictly confining under $\gamma$ with respect to some homomorphism $\rho\colon \Z^n\to \R$.  By Lemmas \ref{lem:rho=0}, \ref{lem:rho>0}, and \ref{lem:rho<0}, there are exactly $n+1$  such structures: $[S_1]=[Q_1\cup Z_{\rho_1}],\dots, [S_n]=[Q_n\cup Z_{\rho_n}],$ and $[S_-]=[Q_-\cup Z_{\rho_-}]$.  It remains to be shown that these structures  are pairwise incomparable.  By Theorem \ref{thm:main}, the Busemann pseudocharacters associated to the quasi-parabolic structures $[S_i]$ and  $[S_-]$ are proportional to $\rho^+_i$ and $\rho_-$, respectively.  If $i\neq j$, then $\rho_i(t_i)>0$ while $\rho_i(t_j)=0$, and $\rho_j(t_i)=0$ while $\rho_j(t_j)>0$.  Therefore $t_i$ is a loxodromic isometry in the structure $[S_i]$ and  elliptic in the structure $[S_j]$, while $t_j$ is elliptic  in the structure $[S_i]$ and  loxodromic in the structure $[S_j]$.  Therefore $[S_i]$ and $[S_j]$ are incomparable for all $i \neq j$.  
  
  We now show that for each $i=1,\dots, n$, the structures  $[S_i]$ and $[S_-]$ are incomparable. We consider the generator $t_i$ of $\Z^n$. By the definition of the Busemann pseudocharacter, the fixed point of $G_k$ on the Gromov boundary in the structure $[S_-]$ is the \emph{attracting} fixed point of $t_i$. In the structure $[S_i]$, the fixed point of $G_k$ on the Gromov boundary is the \emph{repelling} fixed point of $t_i$. If  $[S_-]$ and $ [S_i]$ were comparable then $G_k$ would fix both fixed points of $t_i$ in the action corresponding to the smaller structure. This contradicts that the structures are quasi-parabolic. Thus the proof is complete.
\end{proof}

\subsubsection{Geometry of the actions: Bass-Serre trees} \label{section:actiongeometry}
In this subsection and the next, we give explicit geometric descriptions of the quasi-parabolic actions associated to the structures described in the previous subsection.  

In this section, we consider the hyperbolic structure corresponding to \[Q_i=\Z\left[\frac{1}{p_1^{m_1} \cdots \widehat{p_i^{m_i}}\cdots p_n^{m_n}}\right]=\Z\left[\frac{1}{k_i}\right]\] and show that this is the equivalence class corresponding to  an action of $\Z[\frac{1}{k}]\rtimes_\gamma \Z^n$ on a certain Bass-Serre tree.

The group $G_k$ has as a subgroup \[H_i=\Z\left[\frac{1}{k_i}\right]\rtimes_\gamma \langle t_1, \ldots, \widehat{t_i}, \ldots, t_n\rangle\] since each generator $t_j$ for $j\neq i$ restricts to an automorphism of $\Z\left[1/k_i\right]$. Moreover,  the conjugation action of the generator $t_i$ on $\Z[\frac{1}{k}]$ restricts to an endomorphism from $\Z\left[1/k_i\right]$ onto the \emph{proper} additive subgroup $p_i^{m_i}\Z\left[1/k_i\right]$. Hence we may consider the ascending HNN extension of $H_i$ \[\langle H_i,s \mid shs^{-1}=t_i ht_i^{-1} \text{ for all } h\in H_i\rangle\]  (note that here the conjugate $t_iht_i^{-1}$ lies in $H_i$ so this presentation makes sense). The fundamental group of this HNN extension is isomorphic to $\Z[\frac{1}{k}]\rtimes_\gamma  \Z^n=G_k$ via the map which sends $s$ to $t_i$, and hence we have an action of $G_k$ on the corresponding Bass-Serre tree. Note in particular that $t_i$ acts loxodromically in this action whereas  $H_i$ acts elliptically. By \cite[Proposition 3.14]{AR}  the hyperbolic structure defined by this Bass-Serre tree is equal to $[Q_i \cup Z_{\rho^+_i}]$.

To give a more explicit description of the geometry of this action, we describe the Bass-Serre tree in another way. This construction should be compared to an analogous construction for $BS(1,k)$ in \cite[Section~3.1.3]{AR}.  The vertices are identified with the set $\Q_{p_i^{m_i}}\times \Z$ modulo the equivalence relation $(x,h)\sim (y,h)$ if $\|x-y\|\leq (p_i^{m_i})^{-h}=p_i^{-m_ih}$, where $\|\cdot \|$ denotes the $p_i^{m_i}$--adic absolute value. For any $x\in \Q_{p_i^{m_i}}$, there is an edge joining (the equivalence classes of) $(x,h)$ and $(x,h+1)$.  
The action of $\Z[\frac{1}{k}]\rtimes_\gamma \Z^n$ is defined via the following actions on vertices (here $a=1$ denotes a normal generator of $\Z[\frac{1}{k}]$):\[\begin{tabular}{l} $a: (x,h)\mapsto (x+1,h)$ \\
 $t_i: (x,h)\mapsto (p_i^{m_i}x,h+1)$  \\
 $t_j: (x,h)\mapsto (p_j^{m_j}x,h)$ for $j\neq i$. \end{tabular}\] To verify that this tree equipped with this action of $G_k$ is the same as the Bass-Serre tree we described, we need to study stabilizers of edges and vertices in the action. Before we do this, we give a particular example for concreteness.

\begin{ex}
We choose $k=6$ and the hyperbolic structure corresponding to $Q_2=\Z[\frac{1}{2}]$. Here the vertices of the Bass-Serre tree are identified with equivalence classes of pairs in $\Q_2\times \Z$. We denote our group by $G_6=\Z[\frac{1}{6}]\rtimes_\gamma \Z^2= \llangle a  \rrangle \rtimes_\gamma \langle s,t\rangle$. Here $a$ denotes the normal generator 1 of $\Z[\frac{1}{6}]$ and $\gamma(s)$ and $\gamma(t)$ act by multiplication by 2 and 3, respectively.

The generator $a$ is elliptic and acts as the 2--adic odometer $x\mapsto x+1$ on $\Q_2$; see the left hand side of Figure \ref{fig:bstrees}. The generator $s$ acts loxodromically and simply shifts vertices directly ``upward.'' The generator $t$ acts elliptically but has a more complicated action given by $x\mapsto 3x$ on $\Q_2$; see the right hand side of Figure~\ref{fig:bstrees}.  
\end{ex}

\begin{center}
\begin{figure}[h!]
\begin{tabular}{ll}
\includegraphics[width=0.5\textwidth]{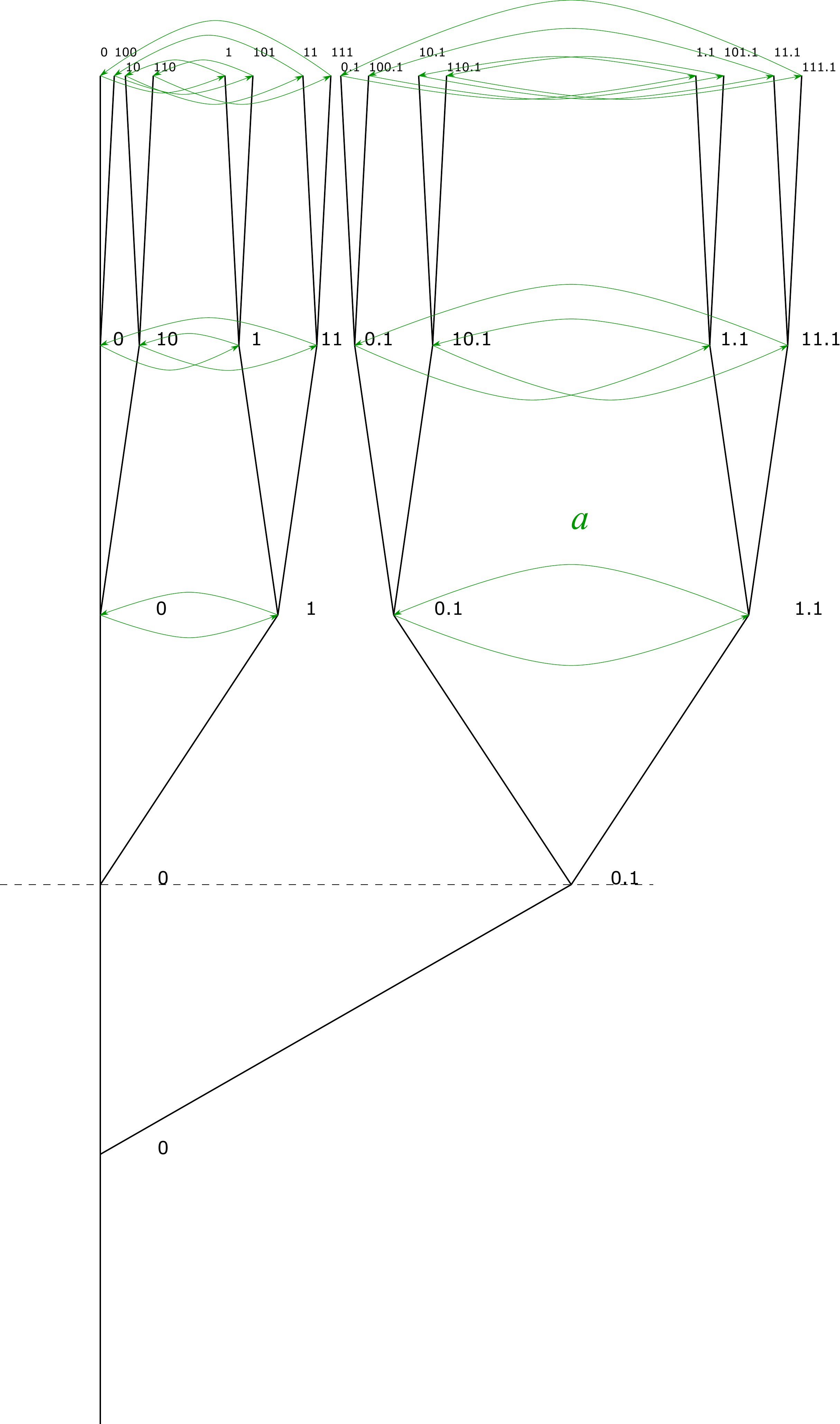}
&
\includegraphics[width=0.5\textwidth]{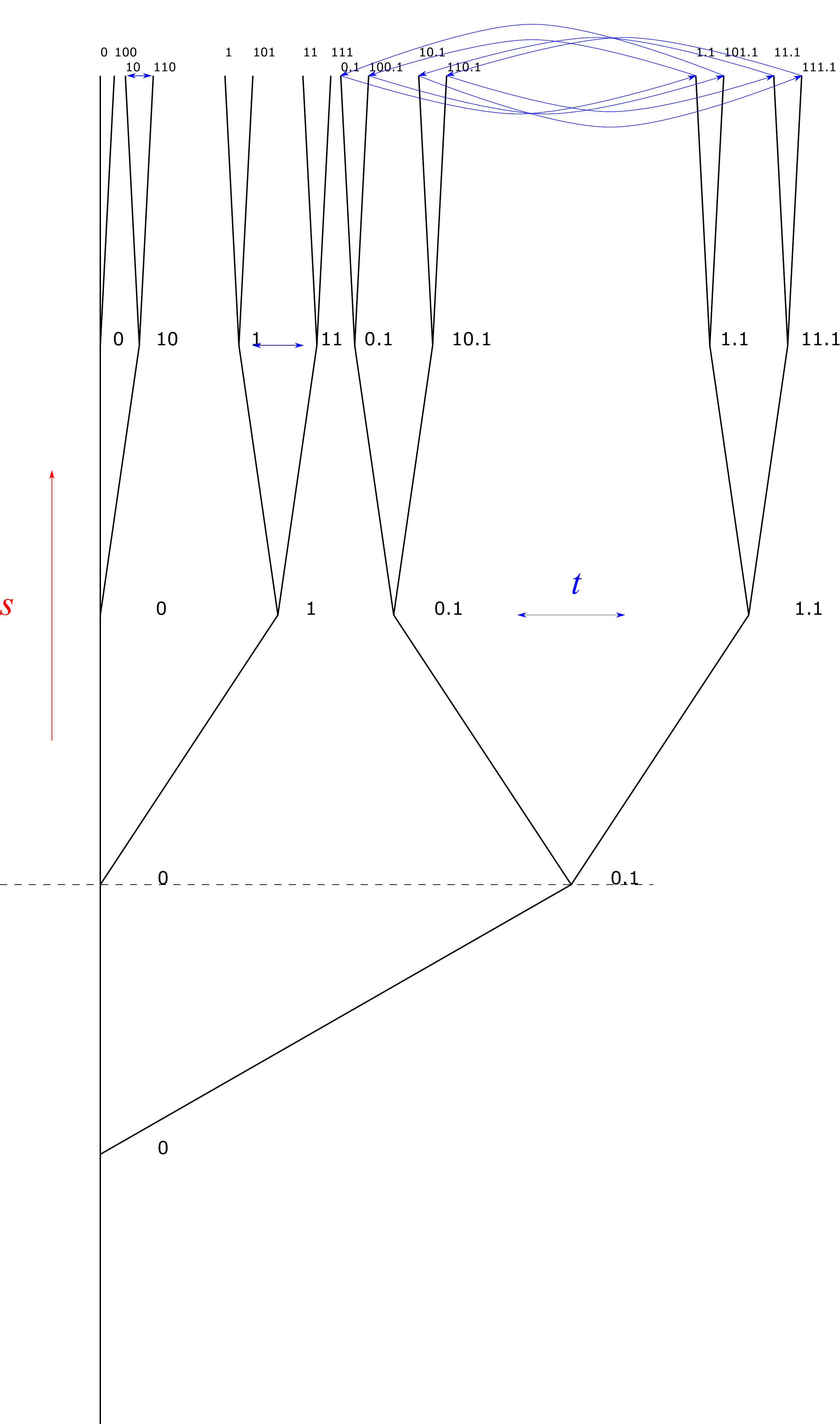}
\end{tabular}
\caption{Left: the action of the generator $a$ of $\Z[\frac{1}{6}]\rtimes \Z^2$. Right: the actions of the generators $s$ and $t$.  In both figures heights are implicit, so that a single 2--adic number is given for the label of each vertex. The vertices at height zero are demarcated by a dotted line.}
\label{fig:bstrees}
\end{figure}
\end{center}

Finally we show that the tree described above is equivariantly isomorphic to the Bass-Serre tree of $G_k$ corresponding to the ascending HNN extension with vertex group $\Z\left[1/k_i\right]\rtimes_\gamma \langle t_1,\ldots, \widehat{t_i},\ldots,t_n\rangle$. For simplicity we assume that $i=n$. Since the tree described above has a single orbit of vertices and a single orbit of edges, it suffices to describe the stabilizers of an edge and its two incident vertices. We focus on the edge $E$ between the vertices $v_0=(0,0)$ and $v_1=(0,1)$. Consider an element $g=xt_1^{r_1}\ldots t_n^{r_n}$, where $x\in \Z[\frac{1}{k}]$, which changes heights in the tree by $r_n$.  If $g$ fixes either vertex of $E$, then we must have $r_n=0$ and $g\in \Z[\frac{1}{k}] \rtimes \langle t_1,\ldots,t_{n-1}\rangle$. Thus \[g (0,0)=(x,0) \text{ and } g (0,1)=(x,1).\] Hence $g$ fixes $v_0$ if and only if the $p_n^{m_n}$--adic absolute value $\|x\|_{p_n^{m_n}}$ is at most one. This occurs exactly when $x$ is an element of  \[\Z\left[\frac{1}{k_n}\right]=\Z\left[\frac{1}{p_1^{m_1}\cdots p_{n-1}^{m_{n-1}}}\right].\] Similarly, $g$ fixes $v_1$ if and only if $\|x\|_{p_n^{m_n}}\leq p_n^{-m_n}$, which occurs exactly when $x\in p_n^{m_n}\Z\left[1/k_n\right]$. Thus the quotient graph of groups has vertex group $\Z\left[1/k_n\right]\rtimes \Z^{n-1}$. The edge group is also $\Z\left[1/k_n\right]\rtimes \Z^{n-1}$, and it embeds isomorphically onto the vertex group on one end and as the subgroup  $p_n^{m_n}\Z\left[1/k_n\right]\rtimes \Z^{n-1}$ on the other end. This completes the proof.

\subsubsection{Geometry of the actions: the hyperbolic plane}

The group $G_k$ also admits an action on the hyperbolic plane $\H^2$. We show that this action corresponds to the confining subset $Q_-$ associated to the homomorphism $\rho_-\colon\Z^n\to \R$ defined by $ t_j\mapsto -m_j \log(p_j)$.

To define the action $G_k\curvearrowright \H^2$, we consider the upper half plane model. We denote by $a$ the normal generator 1 of $\Z[\frac1k]$. For a point $w$ in the upper half plane we define \[a\colon w\mapsto w+1 \text{ and } t_j\colon w\mapsto p_j^{m_j}w.\] It is straightforward to verify that this induces an isometric action of $G_k$.  (To avoid confusion, we use $\cdot$ to denote this action.) One may check that for $r \in \Z[\frac1k]$ we have $r\cdot w = w+r$ and for $z=t_1^{a_1}\ldots t_n^{a_n} \in \Z^n$ we have $z\cdot w = p_1^{m_1a_1}\cdots p_n^{m_na_n}w$. 

We sketch the proof that the action $G_k\curvearrowright \H^2$ is equivalent to the action $G_k\curvearrowright \Gamma(G_k,Q_-\cup Z_{\rho_-})$ associated to the confining subset $Q_-$. This follows the proof of \cite[Proposition 3.16]{AR} closely, and we refer the interested reader there for more details.

The proof begins by invoking the Schwarz-Milnor Lemma (see Lemma~\ref{lem:MS}). We choose $i\in \H^2$ as a basepoint. The reader may check that the orbit of $i$ is dense in the horocycle $\Im(w)=1$. Applying powers of $t_1\ldots t_n\in \Z^n$ (corresponding to multiplication by $k$), we see that the orbit of $i$ is in fact dense in each horocycle $\Im(w)=k^j$, for $j\in \Z$. These horocycles are spaced a distance of $\log(k)$ apart, so any point of $\H^2$ is at distance at most $\log(k)$ from the orbit of $i$. In particular, the orbit of the ball of radius $\log(k)$ based at $i$ under $G_k$ covers $\H^2$. The Schwarz-Milnor Lemma now implies that the action $G_k\curvearrowright \H^2$ is equivalent to the action $G_k\curvearrowright \Gamma(G_k,S)$, where \[S=\{g\in G_k \mid d_{\H^2}(i,g\cdot i)\leq 2\log(k)+1\}.\] It remains to show that we have $[S] =[Q_-\cup Z_{\rho_-}]$ as generating sets of $G_k$.

First, we show that elements of $S$ have bounded word length with respect to $Q_-\cup Z_{\rho_-}$. We may write an element $g$ of $S$ as $g=rz$ with $r\in \Z[\frac1k]$ and $z=t_1^{a_1}\ldots t_n^{a_n} \in \Z^n$. We have $g\cdot i=li+r$ where $l=p_1^{m_1a_1}\cdots p_n^{m_na_n}$. Hence the distance $d_{\H^2}(i,g\cdot i)$ is at least $|\log(l)|=|\rho_-(z)|$. Since $d_{\H^2}(i,g \cdot i)$ is bounded by $2\log(k)+1$, this gives a bound on both $|\log(l)|$ and $|\rho_-(z)|$. The bound on $|\rho_-(z)|$ in turn implies a bound on the word length of $z$ with respect to $Z_{\rho_-}$. Similarly, using \[d_{\H^2}(i,g\cdot i)=2\operatorname{arcsinh}\left(\frac{1}{2}\sqrt{\frac{r^2+(l-1)^2}{l}}\right)\geq 2\operatorname{arcsinh}\left(\frac{1}{2}\frac{r}{\sqrt{l}}\right)\] and the bound on $l$ just obtained, we obtain a bound on $|r|$. Thus, choosing any $z'\in \Z^n$ with $\rho_-(z')$ larger than a constant depending only on $2\log(k)+1$, we have $|\gamma(z')(r)|<1$. In particular, $\gamma(z')(r)\in Q_-$. Since the word length of $z'$ with respect to $Z_{\rho_-}$ is bounded, this gives a bound on the word length of $r=(z')^{-1}\gamma(z')(r)(z')$ with respect to $Q_-\cup \Z_{\rho_-}$.

Finally, we show that elements of $Q_-\cup Z_{\rho_-}$ have bounded word length with respect to $S$. This follows by reversing the argument of the last paragraph. An element of $Q_-$ moves $i$ by distance at most $1$ and hence lies in $S$ by definition. On the other hand, if $z=t_1^{a_1}\ldots t_n^{a_n} \in Z_{\rho_-}$ and $l=p_1^{a_1m_1}\cdots p_n^{a_nm_n}$ then we have $ z\cdot i = li$ and the bound on $|\rho_-(z)|=|-\log(l)|$ implies a bound on $d_{\H^2}(i,z\cdot i)$.  Using the $\log(k)$--density of the orbit of $i$ under $G_k$, we obtain a bound on the word length of $z$ with respect to $S$. This completes the proof.

\subsection{Proof of Theorem \ref{thm:Z1k}}
Proposition \ref{prop:qp} gives a complete description of the quasi-parabolic structures $\mc H_{qp}(G_k)$.  We now consider the other structures in turn, beginning with lineal structures.  We will show that the lineal structures are in bijection with equivalence classes of homomorphisms $\rho\colon \Z^n\to \R$.

Fix $\rho\colon \Z^n\to \R$.  Then $\Z[\frac1k]$ is confining under $\gamma$ with respect to $\rho$, and by Theorem \ref{thm:main}, $[\Z[\frac1k]\cup Z_\rho]$ is a lineal structure.  This defines a map $\phi$  from the set of equivalences classes of homomorphims $\Z^n\to \R$ to the set of lineal structures, where $\phi([\rho])= [\Z[\frac1k]\cup Z_\rho]$.  We will show that $\phi$ is a bijection.

Let $\rho'\colon \Z^n\to\R$ be another homomorphism, and suppose first that $\rho\sim\rho'$.  Then since $\rho$ and $\rho'$ are proportional, every element of $Z_\rho$ has bounded word length with respect to $Z_{\rho'}$, and vice versa.  Therefore $[\Z[\frac{1}{k}]\cup Z_\rho]=[\Z[\frac{1}{k}]\cup Z_{\rho'}]$, which shows that $\phi$ is well-defined.

Suppose next that $\rho\not\sim \rho'$. Then the kernels of $\rho$ and $\rho'$ do not coincide, and there are elements of $\Z^n$ which are a bounded distance from the kernel of $\rho$ while being an unbounded distance from the kernel of $\rho'$. More precisely, there exists a constant $B$ and a sequence $\{z_i\}_{i=1}^\infty \subseteq \Z^n$ with $\rho(z_i)\geq i$ and $|\rho'(z_i)|\leq B$ for all $i$. Since $\rho$ and $\rho'$ are quasi-isometries from $\Gamma(G_k, \Z[\frac1k]\cup Z_\rho)$ to $\R$ and from $\Gamma(G_k, \Z[\frac1k]\cup Z_{\rho'})$ to $\R$, respectively, this proves that $[\Z[\frac1k]\cup Z_{\rho'}]\neq[\Z[\frac1k]\cup Z_\rho]$.  This shows that $\phi$ is injective.
 Moreover, by \cite[Theorem~4.22]{ABO}, we can also conclude that  $[\Z[\frac1k]\cup Z_\rho]$ and  $[\Z[\frac1k]\cup Z_{\rho'}]$ are incomparable.  

Finally, we will show that $\phi$ is surjective.  Given any lineal structure  $[S]$ on $G_k$, Theorem \ref{thm:actiontoconf} implies that $[S]=[\Z[\frac{1}{k}]\cup Z_\rho]$ for some associated homomorphism $\rho\colon \Z^n\to\R$. %there is a confining subset $Q$, which is \emph{not} strictly confining, and an associated homomorphism $\rho\colon \Z^n\to\R$ such that $[S]=[Q\cup Z_\rho]$. We have that $Q=\Z[\frac1k]$.
Therefore, $\phi$ is surjective and so a bijection.

Every quasi-parabolic structure dominates the lineal structure defined by its Busemann pseudocharacter.  In particular, $[\Z[\frac1k]\cup Z_{\rho^+_i}]\preceq[Q_i\cup Z_{\rho^+_i}]$ and $[\Z[\frac1k]\cup Z_{\rho_-}]\preceq[Q_-\cup Z_{\rho_-}]$.  Moreover by an analogous argument to the above proof that $\phi$ is injective, we see that if $\rho\not\sim\rho^+_i$ or $\rho\not\sim\rho_-$, then $[\Z[\frac1k]\cup Z_{\rho}]$ is not dominated by $[Q_i\cup Z_{\rho^+_i}]$ or $[Q_-\cup Z_{\rho_-}]$, respectively.

For our choice of $k \geq 2$, the group $G_k=\Z[\frac1k]\rtimes \Z^n$ is solvable, and so contains no free subgroups.  By the ping-pong lemma, we must have $\mathcal H_{gt}(G_k)=\emptyset$.

Finally, for any group $G$, $\mathcal H_e(G)$ consists of a single element which is the smallest element of $\mathcal H(G)$.  This completes the proof of the theorem.

\subsection{Computation of Bieri-Neumann-Strebel invariants}
\label{section:BNS}

In this section, we compute the Bieri-Neumann-Strebel (BNS) invariants (or simply Bieri-Strebel invariants, in this case) $\Sigma(G_k)$ of the generalized solvable Baumslag-Solitar groups $G_k$. This invariant was first computed for $G_k$ by Sgobbi and Wong   using different methods \cite{SgobbiWong}. For the definitions and background   on Bieri-Strebel and Bieri-Neumann-Strebel invariants, we refer the reader to \cite{BieriStrebel2,BieriNeumannStrebel}.

Our computation is an almost immediate corollary of Theorem~\ref{thm:Z1k} and a result of Brown characterizing the (complement of the) BNS invariant of a group in terms of certain  actions of the group on trees  (\cite[Corollary 7.4]{Brown}, given below as Theorem~\ref{thm:browns}).  Essentially, computing the \textit{entire} poset of hyperbolic structures of a group also gives a description of all of the actions of the group on trees, which then yields the BNS invariant of the group.  We now briefly introduce the terminology necessary to describe Brown's characterization. % At least in principle,    typically not be the most efficient method for computing BNS invariants. Computing the poset of hyperbolic structures of a group is (at least in principle) more difficult than computing its BNS invariant. However, our method implies the following theme: if one may compute the poset of hyperbolic structures of a solvable group, one may also compute its BNS invariant as an easy consequence.

An $\R$-tree $X$  with an action of a group $G$ has an associated hyperbolic length function $\ell$, and the action $G\curvearrowright X$ is called \emph{abelian} if there is a homomorphism $\chi\colon G\to \R$ with $\ell = |\chi|$. Thus $\ell$ uniquely determines $\chi$ up to multiplication by $\pm 1$. There is a unique fixed end $e$ of $X$, and we choose a representative $\chi_X$ for $\pm \chi$ with the property that $\chi_X(g) = \ell(g)$ if $g$ translates away from $e$ and $\chi_X(g) = -\ell(g)$ otherwise. If $G$ has no invariant point or line in $X$, then the action is called \emph{non-trivial}. Note that if $G$ is solvable, then $\chi_X$ is exactly the Busemann pseudocharacter for $G\curvearrowright X$. Recall that when $G$ is finitely generated,  there is a sphere $S(G)$ of homomorphisms $\rho\colon G\to \R$ considered up to scaling by numbers in $\R_{>0}$; we denote the equivalence class of $\rho$ by $[\rho]$. The BNS invariant $\Sigma(G)$ is a subset of $S(G)$. Brown's characterization of $\Sigma(G)$ is:

\begin{thm}[{\cite[Corollary 7.4]{Brown}}]
\label{thm:browns}
If $G$ is finitely generated, then the complement $\Sigma(G)^c$ in $S(G)$ is the set of equivalence classes $[\chi_X]$ obtained as above, where $X$ is an $\R$-tree with a non-trivial, abelian $G$-action.
\end{thm}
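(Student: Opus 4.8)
The plan is to prove the asserted equality of sets by establishing the two inclusions separately, translating in each direction between the combinatorial connectivity condition defining $\Sigma(G)$ and the geometry of a $G$--action on an $\R$--tree. First I would recall the Cayley-graph description of the invariant: fixing a finite generating set $S$ of $G$ and a nonzero character $\chi\colon G\to\R$, one has $[\chi]\in\Sigma(G)$ if and only if the full subgraph $\Gamma_\chi$ of $\Gamma(G,S)$ spanned by $\{g\in G:\chi(g)\geq 0\}$ is connected, a condition independent of $S$ and of the chosen representative of $[\chi]$. Throughout I would extend $\chi$ affinely across the edges of $\Gamma(G,S)$ and, for $t\in\R$, write $\Gamma_{\geq t}=\chi^{-1}([t,\infty))$, so that left translation by $g$ carries $\Gamma_{\geq t}$ onto $\Gamma_{\geq t+\chi(g)}$.

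\textbf{From a tree action to the complement of $\Sigma$.} Suppose $G$ acts non-trivially and abelianly on an $\R$--tree $X$ with $\ell=|\chi_X|$. I would fix a base point $x_0\in X$ together with the Busemann cocycle at the unique fixed end $e$, which records $\chi_X(g)$ as the signed amount that $g$ translates $X$ toward or away from $e$. Since the action has no invariant point or line, there are elements of $\ker\chi_X$ that move $x_0$ onto branches of $X$ whose confluence with the ray from $x_0$ to $e$ lies at arbitrarily low height. Under the orbit map $g\mapsto gx_0$, an edge path in $\Gamma(G,S)$ changes height by a bounded amount per edge; because any arc in a tree between two points must pass through their confluence, a path joining two such orbit points while remaining in $\{\chi_X\geq 0\}$ would be forced below height $0$. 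Hence $\Gamma_{\chi_X}$ is disconnected and $[\chi_X]\in\Sigma(G)^c$.

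\textbf{From the complement of $\Sigma$ to a tree action (the main obstacle).} Conversely, suppose $[\chi]\notin\Sigma(G)$, so that $\Gamma_{\geq 0}$ is disconnected. Following Brown, I would build an $\R$--tree $X$ from the filtration $\{\Gamma_{\geq t}\}_{t\in\R}$ by taking as points the pairs $(t,C)$ with $C$ a connected component of $\Gamma_{\geq t}$, declaring $(t',C')\preceq(t,C)$ when $t'\leq t$ and $C\subseteq C'$, and realizing this poset geometrically with the metric induced by the parameter $t$. Because $\chi(gh)=\chi(g)+\chi(h)$ and $g\cdot\Gamma_{\geq t}=\Gamma_{\geq t+\chi(g)}$, left translation gives an isometric $G$--action on $X$ shifting the $t$--coordinate by $\chi(g)$; the direction $t\to-\infty$, along which every $\Gamma_{\geq t}$ eventually exhausts $\Gamma$, is the unique fixed end $e$, the length function is $|\chi|$, and the action is abelian with $[\chi_X]=[\chi]$. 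The crux is that the failure of connectivity of $\Gamma_{\geq 0}$ is exactly what prevents $X$ from collapsing to a point or a line and forces $G$ to fix no point or line, so that the action is non-trivial. I expect the hard part to be checking that this ``tree of components'' is genuinely an $\R$--tree ($0$--hyperbolic, geodesic, and suitably complete), that the dictionary between combinatorial connectivity and geometric non-triviality is exact in both directions, and that all of this is independent of the auxiliary finite generating set used to define $\Gamma$. This verification, rather than either inclusion in isolation, is the technical heart of the theorem.
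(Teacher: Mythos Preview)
The paper does not prove this statement at all: it is quoted verbatim as \cite[Corollary 7.4]{Brown} and used as a black box in the subsequent computation of $\Sigma(G_k)$. There is therefore no ``paper's own proof'' to compare your proposal against.

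That said, your sketch is a reasonable outline of Brown's original argument, and the two directions you identify are the right ones. A few remarks on what you have written. In the first direction, your claim that ``there are elements of $\ker\chi_X$ that move $x_0$ onto branches of $X$ whose confluence with the ray from $x_0$ to $e$ lies at arbitrarily low height'' is the correct geometric picture, but you have not justified it from the hypothesis that the action is non-trivial; this requires a short argument (if all of $\ker\chi_X$ fixed the ray to $e$ setwise, one would extract an invariant line or point). In the second direction, your ``tree of components'' is indeed Brown's construction, but you should be aware that checking it is an $\R$--tree and that the resulting action is non-trivial genuinely takes work---Brown devotes several pages to this, and in particular one must handle the possibility that components of $\Gamma_{\geq t}$ merge in complicated ways as $t$ decreases. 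Your proposal correctly flags this as ``the technical heart,'' but does not actually carry it out, so as written it is an outline rather than a proof.
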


The computation of the BNS invariant for $G_k$ now follows  quickly from Theorems~\ref{thm:Z1k} and \ref{thm:browns}.  Recall that $n$ is the number of distinct prime factors of $k$.

\begin{cor}
The Bieri-Neumann-Strebel invariant $\Sigma(G_k)$ is the complement of $\{[\rho_1^+],\ldots,[\rho_n^+]\}$ in $S(G_k)\cong S^{n-1}$.
\end{cor}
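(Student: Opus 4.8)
The plan is to apply Brown's characterization (Theorem \ref{thm:browns}) directly, using the classification of hyperbolic actions of $G_k$ from Theorem \ref{thm:Z1k} together with the earlier correspondence results. By Theorem \ref{thm:browns}, $\Sigma(G_k)^c$ consists of the classes $[\chi_X]$ where $X$ is an $\R$-tree with a non-trivial abelian action of $G_k$. Since $G_k$ is solvable, for each such action $\chi_X$ is the Busemann pseudocharacter of $G_k \curvearrowright X$, and since $G_k$ is finitely generated with abelianization having free part $\Z^n$, we have $S(G_k) \cong S^{n-1}$, parametrizing homomorphisms $G_k \to \R$ up to positive scaling. Recall that any homomorphism $G_k \to \R$ vanishes on $\Z[\tfrac1k]$ (as computed at the start of Section 4), so such homomorphisms are exactly the (positive-scaling classes of) homomorphisms $\rho\colon \Z^n \to \R$.

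First I would observe that a non-trivial abelian action of $G_k$ on an $\R$-tree $X$ is in particular a non-elementary or lineal hyperbolic action (it cannot be elliptic since the action is non-trivial, i.e. has no fixed point or line; and an abelian action with a fixed end that is non-trivial is either lineal or quasi-parabolic). Actually the cleanest route: an abelian action on an $\R$-tree fixes an end $e$, so it is either lineal (invariant line) — but the non-triviality hypothesis excludes an invariant line — or it has unbounded orbits and a unique fixed end, hence is quasi-parabolic; in the lineal-with-fixed-end-but-no-invariant-line case the action is still quasi-parabolic in our sense. Wait, more carefully: "non-trivial" excludes a fixed point and a fixed line, so a non-trivial abelian action is genuinely quasi-parabolic. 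Conversely, each of the $n+1$ quasi-parabolic structures of $G_k$ from Theorem \ref{thm:Z1k} either is, or dominates/is represented by, an action on a tree: indeed $[S_i] = [Q_i \cup Z_{\rho_i^+}]$ is represented by the Bass-Serre tree action described in Section \ref{section:actiongeometry}, which is a non-trivial abelian $G_k$-action on a simplicial (hence $\R$-) tree with associated homomorphism proportional to $\rho_i^+$. The structure $[S_-]$ is represented by the action on $\H^2$, which is \emph{not} a tree; I need to argue that $[\rho_-]$ does \emph{not} lie in $\Sigma(G_k)^c$, i.e. that there is no non-trivial abelian action of $G_k$ on an $\R$-tree with associated homomorphism proportional to $\rho_-$.

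The key step — and the main obstacle — is precisely this last point: showing that $[\rho_-] \in \Sigma(G_k)$, equivalently that $G_k$ admits no non-trivial abelian action on an $\R$-tree whose length function is $|\rho_-|$ up to scaling. The idea is that by Theorem \ref{prop:main} (applied with $\beta \propto \rho_-$, noting $\beta(\Z[\tfrac1k]) = 0$), any cobounded quasi-parabolic action of $G_k$ with Busemann pseudocharacter proportional to $\rho_-$ is $G_k$-equivariantly quasi-isometric to $\Gamma(G_k, Q \cup Z_{\rho_-})$ for some strictly confining $Q$; and by Lemma \ref{lem:rho<0} the only such structure is $[Q_- \cup Z_{\rho_-}] = [S_-]$, which by Section \ref{section:actiongeometry} is the action on $\H^2$. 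Since $\H^2$ is not quasi-isometric to any $\R$-tree (it contains quasi-flats / has different asymptotic geometry), no action in this equivalence class is an action on an $\R$-tree — but one must be slightly careful, as an arbitrary (not necessarily cobounded) abelian $\R$-tree action need not be cobounded. However, the associated \emph{minimal} subtree gives a cobounded action, and minimality together with the fixed end forces it into one of our structures; alternatively, one invokes that abelian $\R$-tree actions are, after passing to the minimal invariant subtree, cobounded, so Theorem \ref{prop:main} applies. For the $[\rho_i^+]$: these lie in $\Sigma(G_k)^c$ because the explicit Bass-Serre tree actions realize them. For all other $[\rho]$ with $\rho$ not proportional to any $\rho_i^+$ or to $\rho_-$: by Lemmas \ref{lem:rho=0}, \ref{lem:rho>0}, \ref{lem:rho<0} there is no strictly confining subset with respect to $\rho$, so by Theorem \ref{prop:main} there is no cobounded quasi-parabolic action with Busemann pseudocharacter $\propto \rho$, hence (again via minimal subtrees) no non-trivial abelian $\R$-tree action with homomorphism $\propto \rho$; thus $[\rho] \in \Sigma(G_k)$. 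Assembling these three cases yields $\Sigma(G_k)^c = \{[\rho_1^+], \ldots, [\rho_n^+]\}$, which is the claimed statement.
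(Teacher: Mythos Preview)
Your proposal is correct and takes essentially the same approach as the paper: both invoke Brown's characterization, pass to the minimal invariant subtree to obtain a cobounded action, apply the classification of cobounded hyperbolic actions of $G_k$ from Theorem~\ref{thm:Z1k}, and rule out the $\H^2$ structure because it is not quasi-isometric to a tree. The paper's version is simply more compressed --- it does not separate out the three cases $[\rho_i^+]$, $[\rho_-]$, and generic $[\rho]$ as you do, but instead argues in one stroke that a non-trivial tree action must land in one of the $T_i$ equivalence classes and then matches $\chi_X$ with $\rho_i^+$ via Lemma~\ref{lem:multiplekernel}.
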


\begin{proof}
Suppose that $\chi_X$ is as in Theorem \ref{thm:browns}. If the action $G_k \curvearrowright X$ is not cobounded, then we may consider the minimal invariant sub-tree of $X$, and the resulting homomorphism $\chi_X$ (i.e., the Busemann pseudocharacter) remains unchanged, while the action on it is now cobounded. Hence we may assume without loss of generality that $G_k\curvearrowright X$ itself is cobounded. Since this is a cobounded action of $G_k$ on a hyperbolic space, there is a coarsely equivariant quasi-isometry from $X$ to one of the spaces $\ast, \R, T_i,$ or $\H^2$ described in Section \ref{section:QPofGk}. Since $X$ is a non-trivial tree, the quasi-isometry must be to one of the trees $T_i$. Using the coarse equivariance and considering whether elements translate towards or away from the fixed ends in $T_i$ and $X$, we see that for all $g\in G_k$,  $\chi_X(g)>0$ if and only if $\rho_i^+(g) > 0$. By Lemma \ref{lem:multiplekernel}, $\rho_i^+ \sim \chi_X$. Thus $\Sigma(G)^c = \{[\rho_1^+],\ldots,[\rho_n^+]\}$.
\end{proof}

%%%%%%%%%%%%%%%%%%%%%%%%%%%%%%%%%%%%%%%%%%%%%%%%
%%%%%%%%%%%%%%%%%%%%%%%%%%%%%%%%%%%%%%%%%%%%%%%%

\bibliographystyle{abbrv}
\bibliography{researchbib.bib}
 
\end{document}